\newcommand{\eps}{\varepsilon}
\newcommand{\R}{\mathbb{R}}
\newcommand{\C}{\mathbb{C}}
\renewcommand{\phi}{\varphi}
\newcommand{\mcl}{\mathcal{L}}
\renewcommand{\Re}{\mathrm{Re} \,}
\renewcommand{\Im}{\mathrm{Im} \,}
\def\XXint#1#2#3{{\setbox0=\hbox{$#1{#2#3}{\int}$ }
		\vcenter{\hbox{$#2#3$ }}\kern-.6\wd0}}
\newtheorem{thm}{Theorem}
\newtheorem*{thm*}{Theorem}
\newtheorem{prop}{Proposition}
\newtheorem{lemma}[prop]{Lemma}
\newtheorem{corollary}[prop]{Corollary}
\theoremstyle{definition}
\numberwithin{equation}{section}
\numberwithin{prop}{section}
\begin{document}
\begin{center}
{\fontsize{15}{15}\fontseries{b}\selectfont{Sharp decay rates for localized perturbations to the critical front in the Ginzburg-Landau equation}}\\[0.2in]
Montie Avery and Arnd Scheel \\[0.1in]
\textit{\footnotesize 
University of Minnesota, School of Mathematics,   206 Church St. S.E., Minneapolis, MN 55455, USA} 

\textit{\footnotesize Dedicated to the memory of Pavol Brunovsk\'y}
\end{center}

\begin{abstract}
	We revisit the nonlinear stability of the critical invasion front in the Ginzburg-Landau equation. Our  main result shows that the amplitude of localized perturbations decays with rate $t^{-3/2}$, while the phase decays diffusively. We thereby refine earlier work of Bricmont and Kupiainen as well as Eckmann and Wayne, who separately established nonlinear stability but with slower decay rates. On a technical level, we rely on sharp linear estimates obtained through analysis of the resolvent near the essential spectrum via a far-field/core decomposition which is well suited to accurately describing the dynamics of separate neutrally stable modes arising from far-field behavior on the left and right. 
\end{abstract}

\section{Introduction}

The Ginzburg-Landau equation
\begin{align}
	A_t = A_{xx} + A - A |A|^2, \quad A = A(x,t) \in \C. \label{e: GL}
\end{align}
arises in many contexts as a modulation equation describing approximate dynamics near a Turing instability in pattern-forming systems. In many cases, patterns nucleate locally near a small, localized perturbation of the trivial background state, and then grow, saturate, and spread through a spatially extended system. A fundamental question is then to determine the speed at which localized disturbances spread through the system and the pattern this invasion mechanism produces in the wake. One often restricts mathematical considerations to a one-sided invasion processes, in which case a first description of the invasion process focuses on the existence and stability of \textit{invasion fronts} connecting the stable and unstable rest states. In the Ginzburg-Landau equation, these are traveling wave solutions $A(x,t) = q(x-ct;c)$ satisfying
\begin{align}
	q'' + c q' + q - q^3 = 0, \quad \lim_{\xi \to -\infty} q (\xi; c) = 0, \quad \lim_{\xi \to \infty} q (\xi; c) = 1. 
\end{align}
For each fixed $c$, $q(\cdot; c)$ generates a two-parameter family of invasion fronts with speed $c$ via spatial translation and rotation of the complex phase, owing to the translation invariance and gauge symmetry of \eqref{e: GL}. The real fronts are monotone for $c \geq 2$, and the front with the minimal speed $c_* = 2$ is the most interesting in light of the \textit{marginal stability conjecture} \cite{vanSaarloosReview, Bricmont, EckmannWayne}, which postulates that solutions to \eqref{e: GL} with compactly supported initial data, the most relevant in most invasion processes, spread with asymptotic speed 2. We therefore write $q_* = q(\cdot;2)$, and refer to this solution as the \textit{critical front}. 

When restricted to real-valued solutions, \eqref{e: GL} obeys a maximum principle, and one can then use comparison principle based arguments to show that non-negative, compactly supported initial data spread with asymptotic speed 2 \cite{aronson, Bramson1, Bramson2, Comparison1, Comparison2, Comparison3}. The lack of a maximum principle for complex-valued solutions, however, presents a substantial challenge to resolving the marginal stability conjecture in \eqref{e: GL}. The present authors recently proved the marginal stability conjecture in a general framework of higher order parabolic equations, which in particular lack maximum principles, under conceptual assumptions on the existence and spectral stability of critical fronts \cite{AveryScheelSelection}. In a broader setting, the minimal speed $c_* = 2$ is replaced by the \textit{linear spreading speed}, which characterizes marginal pointwise linear stability in the co-moving frame; see \cite{AveryScheelSelection, HolzerScheelPointwiseGrowth} for details. The analysis in \cite{AveryScheelSelection}, however, relies on an additional technical assumption that the invading state in the wake of the fronts is exponentially stable, with spectrum strictly contained in the left half plane. This assumption is violated here due to the gauge invariance of \eqref{e: GL}, with the invading state instead being only diffusively stable; see Figure \ref{f: spectrum and schematic}. 


Nonlinear stability of the critical front in the Ginzburg-Landau equation against sufficiently localized perturbations was established in \cite{EckmannWayne, Bricmont}. The analysis in \cite{EckmannWayne} is based on energy estimates, and establishes stability without identifying a precise decay rate, while \cite{Bricmont} gives a more detailed description of the dynamics via renormalization group theory, establishing stability with decay rate $t^{-1+\eps}$ in the amplitude and $t^{-1/2+\eps}$ in the phase. Here we revisit this stability analysis and obtain sharp decay rates for the amplitude and phase of perturbations, thereby improving upon the results of \cite{Bricmont, EckmannWayne}. We consider \eqref{e: GL} in the co-moving frame with speed $c_* = 2$, so that $q_*$ is an equilibrium solution to the resulting equation
\begin{align}
	A_t = A_{xx} + 2 A_x + A - A |A|^2, \label{e: GL comoving}
\end{align}

and consider complex-valued perturbations of $q_*$ of the form $A = (q_* + r)e^{i\phi}$. To state our main result, we first introduce a smooth positive exponential weight $\omega$ satisfying 
\begin{align}
	\omega(x) = \begin{cases}
		e^{x}, & x \geq 1, \\
		1, & x \leq -1,
	\end{cases} \label{e: omega def}
\end{align}
as well as smooth positive algebraic weights $\rho_{r_-, r_+}$ for $r_-, r_+ \in \R$, satisfying 
\begin{align}
	\rho_{r_-, r_+} (x) = \begin{cases}
		(1+x^2)^{r_+/2}, & x \geq 1, \\
		(1+x^2)^{r_-/2}, & x \leq -1. 
	\end{cases}
	\label{e: rho def}
\end{align}
Our main result is the following nonlinear stability of the critical front. 
\begin{thm}\label{t: main}
	There exist positive constants $C$ and $\eps$ so that if $(r_0, \phi_0) \in L^1 \cap L^\infty(\R) \times L^1 \cap W^{1,\infty} (\R)$ satisfy
	\begin{align}
		\| \omega \rho_{0,1} r_0 \|_{L^1} + \| \omega r_0 \|_{L^\infty} + \|  \phi_0 \|_{L^1\cap W^{1,\infty}} < \eps, 
	\end{align}
	then the solution $A = (q_* + r)e^{i \phi}$ to \eqref{e: GL comoving} with initial data $A_0 = (q_* + r_0)e^{i \phi_0}$ exists for all $t > 0$ and satisfies 
	\begin{align}
		\| \rho_{0, -1} \omega r(\cdot, t) \|_{W^{1,\infty}} &\leq \frac{C \eps}{(1+t)^{3/2}}, \\
		\| \rho_{0, -1} \phi(\cdot, t) \|_{W^{1, \infty}} &\leq \frac{C \eps}{(1+t)^{1/2}}
	\end{align}
	for all $t > 0$. 
\end{thm}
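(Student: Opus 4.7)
I would start by inserting the polar ansatz $A(x,t) = (q_*(x-\sigma(t)) + r(x - \sigma(t),t))e^{i\phi(x-\sigma(t),t)}$ into \eqref{e: GL comoving} and using $q_*'' + 2q_*' + q_* - q_*^3 = 0$ to derive a coupled quasilinear system for the real amplitude correction $r$ and the real phase $\phi$, of schematic form
\begin{align*}
r_t &= L_r r + \dot\sigma (q_*' + r_x) - (q_* + r)\phi_x^2 - 3 q_* r^2 - r^3, \\
\phi_t &= \phi_{xx} + 2\phi_x + \frac{2(q_*' + r_x)}{q_*+r}\phi_x + \dot\sigma \phi_x,
\end{align*}
with $L_r = \partial_x^2 + 2\partial_x + 1 - 3 q_*^2$. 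The linearization $L_r$ has a neutral eigenvalue $0$ with eigenfunction $q_*'$ coming from translation invariance; I would fix $\sigma(t)$ through an orthogonality condition that projects this mode out of $r$. The gauge neutral mode is the constant in $\phi$, which I would leave in place because the claimed bound for $\phi$ in $\rho_{0,-1} W^{1,\infty}$ is exactly the diffusive decay of a localized, non-zero-mean datum under a heat-like semigroup, and does not require modulation.

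\textbf{Linear estimates.} The core of the proof is the sharp linear analysis of the abstract's far-field/core resolvent decomposition. In the space weighted by $\omega$, the essential spectrum of $L_r$ associated with the unstable state ahead of the front is shifted into a strictly stable half plane (the standard Sattinger-type shift for a pulled front, marginal at $c_* = 2$); once the translation eigenvalue is removed by modulation, one expects pointwise semigroup bounds of the form
\begin{align*}
\|\rho_{0,-1}\omega e^{t L_r} f\|_{W^{1,\infty}} \leq \frac{C}{(1+t)^{3/2}} \left( \| \omega \rho_{0,1} f \|_{L^1} + \| \omega f\|_{L^\infty} \right),
\end{align*}
where the improved $t^{-3/2}$ rate, rather than the heat-like $t^{-1/2}$, is produced by the extra first spatial moment encoded in the weight $\rho_{0,1}$ and by the additional gain from the exponential weight on the right. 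The linearized phase equation behaves like a heat operator on $\R$ and delivers
\begin{align*}
\|\rho_{0,-1} e^{t L_\phi} g\|_{W^{1,\infty}} \leq \frac{C}{(1+t)^{1/2}} \|g\|_{L^1 \cap W^{1,\infty}}.
\end{align*}
Both estimates emerge from a resolvent expansion near the essential spectrum that treats the two neutrally stable far-field modes carried by the left and right far-field behaviors of $q_*$ on an equal footing.

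\textbf{Nonlinear closure.} With these linear estimates, I would set up a contraction on the Banach space $X_T$ of triples $(r,\phi,\sigma)$ whose norm enforces the decay rates in Theorem \ref{t: main} uniformly on $[0,T]$, together with a decay bound $|\dot\sigma(t)| \lesssim \eps (1+t)^{-3/2}$. Writing the Duhamel formulas for $r$ and $\phi$, the cubic and quadratic-in-$r$ terms are straightforward in the weighted $L^1 \cap L^\infty$ norms demanded by the semigroup estimates; the translation correction $\dot\sigma(q_*' + r_x)$ is controlled once $\dot\sigma$ is shown to decay like $(1+t)^{-3/2}$ via the orthogonality condition; the coupling $(q_*+r)\phi_x^2$ produces a forcing of order $(1+s)^{-1}$, and convolving with a $(t-s)^{-3/2}$ kernel returns the target rate $(1+t)^{-3/2}$; the phase-equation nonlinearities $\phi_x^2$ and $\phi_x (q_*' + r_x)/(q_*+r)$ are handled in the same spirit, using that the latter is effectively localized near the front interface because $q_*' + r_x$ decays on both sides.

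\textbf{Main obstacle.} I expect the hardest step to be the quadratic phase-gradient interaction $(q_*+r)\phi_x^2$ feeding the amplitude equation. Because $\phi_x$ decays only at the borderline diffusive rate $(1+t)^{-1}$ in $L^\infty$, the time convolution with the $(t-s)^{-3/2}$ $r$-kernel is marginal, and closing the argument requires extracting the missing temporal weight from the spatial localization of $q_*$ together with the algebraic weight $\rho_{0,1}$ built into the $L^1$ hypothesis on $r_0$. A secondary, more technical difficulty is the quasilinear coefficient $(q_*+r)^{-1}$ in the phase equation, which is singular on the unstable side where $q_* \to 0$; handling this requires either restricting the phase analysis to the region where $q_* \gtrsim \delta$ (legitimate, since the phase $\phi$ is only dynamically meaningful there) or an algebraic rearrangement that absorbs the apparent singularity using the gauge structure of the ansatz.
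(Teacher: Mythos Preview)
Your proposal has two genuine gaps. First, the modulation via $\sigma(t)$ is based on a misconception: after conjugating by the exponential weight $\omega$, the formal kernel element $\omega q_*'$ grows linearly as $x\to+\infty$ (from the front asymptotics $q_*(x)\sim (a+bx)e^{-x}$), so it is \emph{not} an eigenfunction of $\mcl_p=\omega L_r\omega^{-1}$ in any of the relevant spaces. There is nothing to project out, and the paper uses no modulation at all; the $t^{-3/2}$ rate comes precisely from this absence of a genuine resonance at the edge of the essential spectrum, encoded as Lipschitz regularity of the resolvent in $\gamma=\sqrt{\lambda}$. Second, working directly in $(r,\phi)$ coordinates is obstructed: the phase linearization $L_\phi=\partial_{xx}+2\partial_x+2(q_*'/q_*)\partial_x$ has a coefficient that converges to its limit at $+\infty$ only algebraically, which blocks the far-field/core resolvent argument, and the factor $(q_*+r)^{-1}$ is genuinely singular on the right. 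The paper resolves both issues at once by passing to $\psi=\omega q_*\phi$, which makes all coefficients in $\mcl_\psi$ exponentially convergent and removes the singularity.

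Your nonlinear closure is also too thin. The forcing $(q_*+r)\phi_x^2$ must be controlled in $L^1_{0,1}$, not $L^\infty$, to feed the $t^{-3/2}$ semigroup estimate; for this the paper tracks $\|\psi_x\|_{L^1}\lesssim s^{-1/2}$ in addition to $\|\psi_x\|_{L^\infty}\lesssim (1+s)^{-1}$, giving $\|\psi_x^2\|_{L^1}\lesssim (1+s)^{-3/2}$. More subtly, terms involving $\psi$ (rather than $\psi_x$) arise with coefficients localized only on the left, and the diffusive rate $\|\psi\|_{L^\infty}\lesssim(1+s)^{-1/2}$ is not enough to close. The paper exploits the outward transport in the wake to obtain $\|\psi\|_{L^\infty_{-1,0}}\lesssim(1+s)^{-1}$ --- faster decay in a norm allowing algebraic growth on the left --- via a separate resolvent estimate (Proposition~\ref{p: psi resolvent L1 to Linf-10}). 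Your bootstrap norm would need to include analogues of both of these quantities, and your linear analysis would need to produce them; neither follows from the heat-like bound you wrote for $L_\phi$.
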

As usual, $W^{k, \ell} (\R)$ denotes the Sobolev space of weakly differentiable functions up to order $k$ with integrability index $\ell$. Due to the gauge invariance of \eqref{e: GL}, Theorem \ref{t: main} of course holds if one replaces $q_*$ with $q_* e^{i \theta_0}$ for any fixed phase $\theta_0 \in [0, 2\pi)$. The $t^{-3/2}$ decay exhibited here for the amplitude was established under the restriction to real-valued solutions in \cite{Gallay, FayeHolzer, AveryScheel} and is known to be sharp in light of the asymptotics given in \cite{Gallay, AveryScheel}. Key to the analysis of \cite{AveryScheelSelection} establishing the marginal stability conjecture for an exponentially stable invading state are sharp decay estimates for perturbations of the critical front, used there to close a perturbative argument near a refined approximate solution. Indeed, the $t^{-3/2}$ decay rate is closely related to the logarithmic delay $-\frac{3}{2} t$ in the position of a front solution evolving from initial data compactly supported on the right as predicted by Ebert and van Saarloos \cite{EbertvanSaarloos}. In light of this, we are confident that the present analysis is not only of technical interest but also represents a significant step towards resolving the marginal stability conjecture in the Ginzburg-Landau equation. 

\subsection{Preliminaries}

\noindent \textbf{Choice of coordinates and general approach.} Considering perturbations of the critical front of the form $A = (q_* + r) e^{i \phi}$ leads to the system 
\begin{align}
	r_t &= r_{xx} + 2 r_x + (1- 3 q_*^2)r - 3 q_* r^2 - r^3 - q_* \phi_x^2 - r \phi_x^2, \\
	\phi_t &= \phi_{xx} + 2 \phi_x + 2 \frac{\partial_x (q_* + r)}{q_* + r} \phi_x. 
\end{align} 
The linearization about $(r, \phi) = (0, 0)$ is diagonal, and Palmer's theorem \cite{Palmer1, Palmer2} implies that the essential spectrum of the $r$ component of the linearization is unstable, since $1- 3 q_*^2 \to 1$ as $x \to \infty$.  Conjugating with the exponential weight $\omega$ defined in \eqref{e: omega def} stabilizes the essential spectrum, so that it touches the imaginary axis at the origin and is otherwise contained in the left half-plane; see Figure \ref{f: spectrum and schematic}, left panel. Hence we define the weighted variable $p = \omega r$, so that we recover this marginal stability by restricting to exponentially localized perturbations. 

\begin{figure}
	\centering
	\includegraphics[width = 1\textwidth]{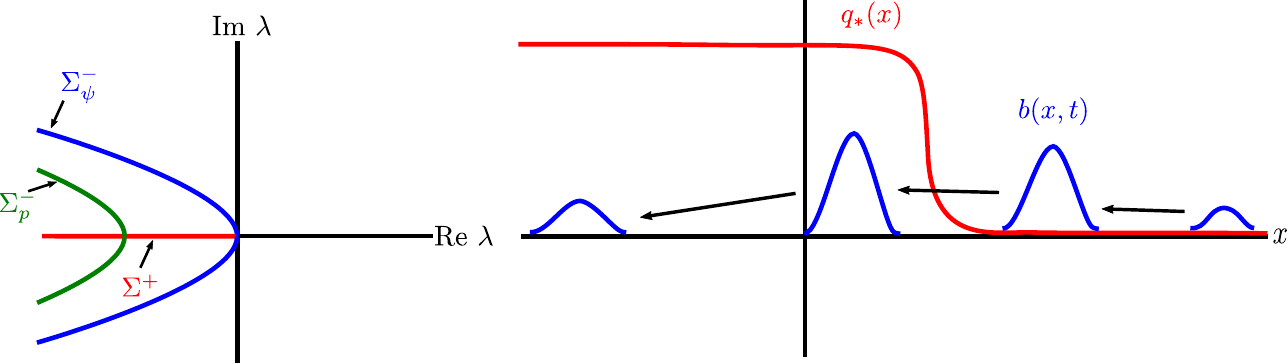}
	\caption{\footnotesize{Left: the Fredholm borders of the linearizations $\mcl_\psi$ and $\mcl_p$, which bound their respective essential spectra. Right: a schematic of a real critical front (red) and the time evolution of an exponentially localized perturbation in the imaginary component $b(x,t)$, in the co-moving frame with speed 2. The perturbation initially grows due to the pointwise instability of the background state, but is advected into the bulk of the front where it decays diffusively. This imaginary perturbation to the real front also induces a faster-decaying real perturbation to the real front, which we omit in this picture.}}
	\label{f: spectrum and schematic}
\end{figure}

The linearization in the $\phi$ component is $\mcl_\phi = \partial_{xx} + 2 \partial_x + 2 q_*'/q_* \partial_x$. The essential spectrum of this operator is marginally stable but we encounter an additional technical difficulty, namely that the coefficient $2q_*'/q_*$ attains its limit at $+\infty$ at only an algebraic rate. As we shall see, this slow algebraic convergence would obstruct our approach to obtaining linear estimates, and we therefore remove this difficulty by introducing a weighted variable $\psi = \omega q_* \phi$. These coordinates are used in a heuristic argument for the expected decay rates in \cite{Bricmont}, and are similar to those used to establish nonlinear stability of source defects in the complex-coefficient Ginzburg-Landau equation in \cite{BeckNguyenSandstedeZumbrunCGL}.

An alternative approach would separate $A = a + ib$ into real and imaginary components; indeed, these coordinates are natural in that the rest state $A \equiv 0$ admits a two-dimensional linear pointwise instability, with separate growth in the real and imaginary components. Hence the linearization about a critical front in the imaginary component is also unstable, requiring an exponential weight to push the essential spectrum to the imaginary axis. This is somehow masked in $(r, \phi)$ coordinates due to the singularity in polar coordinates as $r \to 0^+$. The technical difficulty with using $(a,b)$ coordinates is that in these coordinates, critical nonlinear terms appear in the analysis of the state in the wake, so that a normal form-type coordinate transformation would be needed to remove these terms and close a nonlinear argument. We note here that $\psi(x, t) \sim e^x b(x,t)$ for $x \gg 1$, so that $\psi$ captures the behavior of the imaginary component in the leading edge, while simultaneously enjoying the advantage that polar coordinates capture only irrelevant nonlinear terms in the wake.

The weighted variables $(p, \psi)$ then solve the system
\begin{align}
	p_t &= \mcl_p p - 3 q_* \omega^{-1} p^2 - \omega^{-2} p^3 - \omega q_* [ (\omega^{-1} q_*^{-1}  \psi)_x]^2 - p [(\omega^{-1} q_*^{-1} \psi)_x]^2 \label{e: p equation}\\
	\psi_t &= \mcl_\psi \psi + 2 \omega q_*' \left( \frac{1}{1+\omega^{-1} q_*^{-1} p} - 1 \right) (\omega^{-1} q_*^{-1} \psi)_x + 2 \omega (\omega^{-1} p)_x \left( \frac{1}{1 + \omega^{-1} q_*^{-1}} \right) (\omega^{-1} q_*^{-1} \psi)_x, \label{e: psi equation}
\end{align}
where 
\begin{align}
	\mcl_p = \partial_{xx} + 2(1 + \omega (\omega^{-1})') \partial_x + 1 - 3 q_*^3 + 2 \omega (\omega^{-1})' + \omega \omega''. 
\end{align}
and 
\begin{align}
	\mcl_\psi = \partial_{xx} + 2 (1 + (\omega^{-1})' \omega) \partial_x + (\omega^{-1})'' \omega + 1 - q_*^2. 
\end{align}
The coefficients of $\mcl_\psi$ and $\mcl_p$ each attain limits exponentially quickly as $x \to \pm \infty$, with limiting operators 
\begin{align}
	\mcl_p^+ = \partial_{xx}, \quad \mcl_p^- = \partial_{xx} + 2 \partial_x - 2,
\end{align}
and 
\begin{align}
	\mcl_\psi^+ = \partial_{xx}, \quad \mcl_\psi^- = \partial_{xx} + 2 \partial_x. 
\end{align}
The essential spectra of these operators are given by the dispersion curves 
\begin{align}
	\sigma_{\mathrm{ess}} (\partial_{xx}) &:= \Sigma^+ = \{ -k^2 \in \C : k \in \R \}, \\
	\sigma_{\mathrm{ess}} (\mcl_p^-) &:= \Sigma_p^- = \{ -k^2 + 2 ik - 2 \in \C : k \in \R\}, \\
	\sigma_{\mathrm{ess}} (\mcl_\psi^-) &:= \Sigma_\psi^- = \{ -k^2 + 2 ik \in \C : k \in \R\}. 
\end{align}
These curves determine the boundaries of the essential spectra of the full operators $\mcl_p$ and $\mcl_\psi$; see Figure \ref{f: spectrum and schematic}, left panel, for a schematic and \cite{KapitulaPromislow, FiedlerScheel} for background. Our analysis is based on sharp linear decay estimates which we obtain by deforming the integration contours in the definitions of the semigroups $e^{\mcl_p t}, e^{\mcl_\psi t}$ near the essential spectrum. We then extract time decay from precise estimates on the behavior of the resolvents of $\mcl_p, \mcl_\psi$ near their essential spectrum. Crucial to our approach is the use of a far-field/core decomposition which allow us to efficiently separate behavior arising from the limiting dynamics at $+\infty$ from that determined by dynamics at $-\infty$. We are thereby able to decompose the resolvent into two terms and deform the integration contours in the formula for the semigroup separately in each of these terms, using contours adapted to the behavior on the right as in \cite{AveryScheel, AveryScheelSelection} in one case and contours adapted to the diffusive spectrum with quadratic tangency at the origin, $\Sigma_\psi^-$, as in \cite{Kapitula1, Kapitula2}, in the other. 

The stability of the critical front considered here bears some conceptual similarities to the stability of source defects in the complex Ginzburg-Landau equation considered in \cite{BeckNguyenSandstedeZumbrunCGL}. In particular, a key challenge in both contexts is characterizing diffusive stability in the presence of outward transport. The approach in \cite{BeckNguyenSandstedeZumbrunCGL} uses an ansatz to explicitly capture outgoing diffusive wave packets and then establish decay using pointwise semigroup estimates. Here, rather than explicitly capturing the diffusive wave packets advected to the left in the bulk of the front (see Figure \ref{f: spectrum and schematic}), we take advantage of the fact that the outward transport induces additional decay in weighted norms which allow for algebraic growth. We are able to estimate the nonlinearities in such norms due to the fact that only derivatives of $\phi$ appear in the $(r, \phi)$ system, and hence when we change to $(r, \psi)$ coordinates, every term in the nonlinearity term involving $\psi$, rather than $\psi_x$, carries a factor of $(\omega^{-1} q_*^{-1})_x$ and hence is very localized on the left. This is ultimately due to the gauge invariance in the original coordinates in \eqref{e: GL}. The improved decay of $\psi$ in algebraically weighted norms allowing growth as well as improved decay of $\psi_x$ when compared to the diffusive decay rate $t^{-1/2}$ is then sufficient to close a nonlinear argument. Our approach to the nonlinear argument is therefore somewhat more direct than that of \cite{BeckNguyenSandstedeZumbrunCGL}, but at the cost of a detailed description of the outgoing wave packets. 

We also mention related work establishing stability of supercritical fronts -- moving faster than the linear spreading speed -- in the Swift-Hohenberg equation \cite{SchneiderEckmann} and in the Ginzburg-Landau equation coupled to an additional conservation law \cite{Hilder}, where the main difficulty is again to characterize diffusive decay in the presence of outward transport. The methods there are specifically adapted to supercritical fronts, relying crucially on the fact that one can obtain exponential in time linear stability of the unstable rest state in a suitable exponentially weighted norm. 

\noindent \textbf{Outline of the paper.} In Section \ref{s: p resolvent estimates}, we revisit the study of the resolvent of $\mcl_p$ in order to obtain new decay estimates for $e^{\mcl_p t}$ in $L^1(\R)$, as we will need these estimates to close our nonlinear arguments. In Section \ref{s: psi resolvent estimates}, we study the resolvent of $\mcl_{\psi}$, separating behavior originating from limiting dynamics on the left and on the right via a far-field core decomposition, extending techniques introduced in \cite{AveryScheel, AveryScheelSelection}. In Section \ref{s: linear estimates}, we translate the resolvent estimates obtained in the preceding two sections into linear decay estimates via integrating over appropriately chosen contours. In Section \ref{s: nonlinear stability}, we use control of a carefully constructed time-weighted norm to show that the linear decay estimates persist for the full nonlinear system, thereby proving Theorem \ref{t: main}.

\noindent \textbf{Function spaces.} We require more general exponential weights for our analysis of the resolvent near the essential spectrum. For $\eta_-, \eta_+ \in \R$, we define a smooth positive exponential weight $\omega_{\eta_-, \eta_+}$ satisfying 
\begin{align}
	\omega_{\eta_-, \eta_+} (x) = \begin{cases}
		e^{\eta_+ x}, & x \geq 1, \\
		e^{\eta_- x}, & x \leq -1.  
	\end{cases}
\end{align}
For an non-negative integer $k$ and $1 \leq \ell \leq \infty$, we then define the exponentially weighted Sobolev space $W^{k,\ell}_{\mathrm{exp},\eta_-,\eta_+} (\R)$ through the norm 
\begin{align}
	\| f \|_{W^{k,\ell}_{\mathrm{exp},\eta_-,\eta_+}} = \| \omega_{\eta_-, \eta_+} f\|_{W^{k,\ell}}. 
\end{align}
When $k = 0$, we write $W^{0, \ell}_{\mathrm{exp}, \eta_-, \eta_+} (\R) = L^\ell_{\mathrm{exp},\eta_-, \eta_+} (\R)$. Similarly, for $r_-, r_+ \in \R$, we define algebraically weighted Sobolev spaces $W^{k, \ell}_{r_-, r_+} (\R)$ through the norm 
\begin{align}
	\| f \|_{W^{k,\ell}_{r_-, r_+}} = \| \rho_{r_-, r_+} f \|_{W^{k, \ell}},
\end{align}
where $\rho_{r_-, r_+}$ is given by \eqref{e: rho def}, and for $k = 0$ we write $W^{0, \ell}_{r_-, r_+} (\R) = L^\ell_{r_-, r_+} (\R)$. 

\noindent \textbf{Additional notation.} We let $\mathcal{B}(X,Y)$ denote the space of bounded linear operators between two Banach spaces $X$ and $Y$, equipped with the operator norm topology. For $\delta > 0$, we let $B(0, \delta)$ denote the open unit ball centered at the origin in the complex plane with radius $\delta$. When the intention is clear, we may abuse notation slightly by writing a function $u(x,t)$ or $u(x;\gamma)$ as $u(t) = u(\cdot, t)$ or $u(\gamma) = u(\cdot; \gamma)$, viewing it as an element of some function space for each $t$ or $\gamma$. Throughout the paper, we use the notation $\langle x \rangle = (1+x^2)^{1/2}$.

\noindent \textbf{Acknowledgements.} This material is based upon work supported by the National Science
Foundation through the Graduate Research Fellowship Program under Grant No. 00074041, as
well as through NSF-DMS-1907391. Any opinions, findings, and conclusions or recommendations
expressed in this material are those of the authors and do not necessarily reflect the views of the
National Science Foundation.

\section{Resolvent estimates for $\mcl_p$}\label{s: p resolvent estimates}

The linearization in the amplitude component $\mcl_p$ is precisely the linearization about a real Fisher-KPP front, which we have studied in greater generality via our far-field/core approach in \cite{AveryScheel, AveryScheelSelection}. To unfold the branch point in the right dispersion curve $\Sigma^+$, we let $\gamma = \sqrt{\lambda}$, with branch cut chosen along the negative real axis. The sharp $t^{-3/2}$ decay rate is implied by the following regularity of the resolvent near the essential spectrum, which is a special case of \cite[Proposition 3.5]{AveryScheelSelection}. 

\begin{prop}\label{p: p resolvent lipschitz}
	There exist positive constants $C$ and $\delta$ and a bounded limiting operator $R_p^0 : L^1_{0,1} (\R) \to W^{1, \infty}_{0, -1} (\R)$ such that for any $f \in L^1_{0,1} (\R)$, we have 
	\begin{align}
		\| (\mcl_p - \gamma^2)^{-1} f - R_p^0 f \|_{W^{1, \infty}_{0, -1}} \leq C |\gamma| \| f \|_{L^1_{0,1}}
	\end{align}
	for all $\gamma \in B(0,\delta)$ such that $\Re \gamma \geq 0$. 
\end{prop}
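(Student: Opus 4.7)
The plan is to follow the far-field/core approach developed in \cite{AveryScheel, AveryScheelSelection}, which unfolds the branch point of the dispersion curve $\Sigma^+$ at the origin and isolates the resonant spatial mode responsible for the singularity of the resolvent. First I would pass to the unfolded spectral parameter $\gamma = \sqrt{\lambda}$ (branch cut along the negative real axis), so that the spatial eigenvalues of the limiting operator $\mcl_p^+ - \gamma^2 = \partial_{xx} - \gamma^2$ at $+\infty$ become $\pm \gamma$, distinct for $\gamma \neq 0$ and colliding at $\gamma = 0$. Exploiting the exponential convergence of the coefficients of $\mcl_p$ to $\mcl_p^\pm$, I would then construct a Jost-type solution $e_+(x; \gamma)$ of $(\mcl_p - \gamma^2) e_+ = 0$ which behaves like $e^{-\gamma x}$ as $x \to +\infty$ and depends analytically on $\gamma$ in a neighborhood of the origin.

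With $\chi$ a smooth cutoff equal to $1$ for $x \geq 1$ and $0$ for $x \leq -1$, the far-field/core ansatz $u = w + \beta\, \chi\, e_+(\cdot; \gamma)$ explicitly separates the amplitude $\beta \in \C$ of the marginally decaying mode at $+\infty$ from a core remainder $w$ lying in a weighted Sobolev space $X$, chosen so that $\mcl_p : X \to L^1_{0,1}$ is Fredholm of index $-1$ with cokernel generated by the class of $\chi\, e_+(\cdot; 0)$. Substituting into $(\mcl_p - \gamma^2) u = f$ yields the augmented system
\begin{align}
T(\gamma)(w, \beta) := (\mcl_p - \gamma^2) w + \beta\, g(\cdot; \gamma) = f, \qquad g(\cdot; \gamma) := (\mcl_p - \gamma^2)\bigl(\chi\, e_+(\cdot; \gamma)\bigr).
\end{align}
The forcing $g$ splits into commutator terms supported where $\chi' \neq 0$ and an exponentially localized remainder from $(\mcl_p - \mcl_p^+)\, e_+$, so $g(\cdot; \gamma) \in L^1_{0,1}$ and $T(\gamma)$ is a bounded operator $X \times \C \to L^1_{0,1}$ depending analytically on $\gamma$ near $0$.

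The crux is to verify that $T(0)$ is an isomorphism. Fredholmness of index $0$ for the augmented operator follows from a Morse-index count across $\gamma = 0$ as in \cite[Section 3]{AveryScheelSelection}, using Palmer's theorem and the exponential convergence of the coefficients. For triviality of the kernel, the only candidate element $w \in X$ with $\mcl_p w = 0$ stems from the translation eigenfunction $p_0 = \omega q_*'$, which behaves linearly at $+\infty$ with a generally nonzero constant component; matching this expansion against $w + \beta\, \chi\, e_+(\cdot; 0)$ forces $\beta = 0$ first and then $w = 0$. This asymptotic matching at the branch point is the main technical obstacle, since it requires a careful identification of the generalized eigenspace structure of $\mcl_p^+$ at $\gamma = 0$; once it is in hand, the abstract framework of \cite{AveryScheelSelection} applies directly.

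Given invertibility at $\gamma = 0$, the analytic implicit function theorem yields $T(\gamma)^{-1}$ analytic on $B(0, \delta) \cap \{ \Re \gamma \geq 0 \}$. Writing $(\mcl_p - \gamma^2)^{-1} f = w(\gamma) + \beta(\gamma)\, \chi\, e_+(\cdot; \gamma)$ and setting $R_p^0 f := w(0) + \beta(0)\, \chi\, e_+(\cdot; 0)$, a first-order Taylor expansion in $\gamma$ combined with analyticity of $e_+$ in $\gamma$ produces the desired estimate
\begin{align}
\bigl\| (\mcl_p - \gamma^2)^{-1} f - R_p^0 f \bigr\|_{W^{1, \infty}_{0, -1}} \leq C |\gamma|\, \| f \|_{L^1_{0, 1}},
\end{align}
completing the argument.
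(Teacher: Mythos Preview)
Your outline captures the spirit of the far-field/core method, but there is a genuine gap in how you pass from $f \in L^1_{0,1}$ to the Fredholm framework. You assert the existence of a weighted space $X$ with $\mcl_p : X \to L^1_{0,1}$ Fredholm of index $-1$, and then invoke Palmer's theorem and a Morse-index count. But Palmer's theorem and the spatial-dynamics Fredholm theory operate on \emph{exponentially} weighted spaces: the point is to shift the spatial eigenvalues of the asymptotic operator $\mcl_p^+ = \partial_{xx}$ off the imaginary axis, and algebraic weights do not accomplish this. At $\gamma = 0$ the asymptotic system at $+\infty$ has a double zero spatial eigenvalue, so $\mcl_p$ is not Fredholm between algebraically weighted spaces such as $W^{2,1}_{0,1}$ and $L^1_{0,1}$. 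Your augmented operator $T(\gamma) : X \times \C \to L^1_{0,1}$ therefore lacks the Fredholm property you need, and the implicit function argument does not get off the ground.

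The paper (following \cite{AveryScheel, AveryScheelSelection}) resolves this by inserting a preliminary step that you have omitted: one first decomposes $f = f_- + f_c + f_+$ via a partition of unity and explicitly solves the far-field problems $(\mcl_p^\pm - \gamma^2) p^\pm = f_\pm$, crucially taking the odd extension $f_+^{\mathrm{odd}}$ on the right so that $(\partial_{xx}-\gamma^2)^{-1} f_+^{\mathrm{odd}}$ gains the needed regularity in $\gamma$ (this is where the absorption mechanism on the left enters). Subtracting $\chi_+ p^+$ and $p^-$ leaves an \emph{exponentially} localized right-hand side $\tilde{f}(\gamma)$, and only then does one apply the ansatz $p^c = w + b\,\chi_+ e^{-\gamma \cdot}$ with $w \in W^{2,1}_{\mathrm{exp},-\eta,\eta}$, where the Fredholm bordering lemma legitimately applies. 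Your Jost-solution variant of the far-field mode is fine, but it does not by itself absorb the merely algebraically localized part of $f$; you still need the explicit far-field inversion of $f_\pm$ before the core problem becomes amenable to the Fredholm argument.
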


To close the nonlinear argument here, we require an additional linear decay estimate measuring the solution in $L^1$, which we prove by estimating the resolvent near the essential spectrum in $L^1$. We start by analyzing the resolvent of the limiting operator on the right, $\mcl_p^+ = \partial_{xx}$. 

\subsection{Resolvent estimates for $\mcl_p^+$}
As in \cite{AveryScheel, AveryScheelSelection}, we take advantage of the absorption mechanism induced by the strong spectral stability of $\mcl_p^-$ by establishing estimates on $(\mcl_p^+ - \gamma^2)^{-1}$ restricted to odd functions, and then enforcing this oddness in our far-field/core decomposition when we pass estimates to the full resolvent. For any sufficiently localized odd function $f$, the action of $(\mcl_p^+-\gamma^2)^{-1} = (\partial_{xx}-\gamma^2)^{-1}$ for any $\gamma \in \C$ with $\Re \gamma > 0$ is given by 
\begin{align}
	(\partial_{xx} - \gamma^2)^{-1} f (x) = \int_0^\infty G_{\gamma}^\mathrm{odd} (x,y) f(y) \, dy,
\end{align} 
with integral kernel 
\begin{align}
	G_\gamma^\mathrm{odd} (x,y) = \frac{1}{2 \gamma} \left( e^{-\gamma |x-y|} - e^{-\gamma|x+y|} \right).
\end{align}
Using this representation, we establish the following estimate. 

\begin{lemma}\label{l: p resolvent right L11 to L1}
	There exists a constant $C > 0$ such that for any odd $f \in L^1_{1,1} (\R)$, we have 
	\begin{align}
		\| (\partial_{xx}-\gamma^2)^{-1} f \|_{L^1} \leq \frac{C}{|\gamma|} \| f \|_{L^1_{1,1}}
	\end{align}
	for any $\gamma$ with $\Re \gamma \ge \frac{1}{2} | \Im \gamma|$. 
\end{lemma}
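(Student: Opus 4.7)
The plan is to exploit the explicit formula for $G_\gamma^{\mathrm{odd}}(x,y)$, extracting a cancellation between its two exponential terms that produces the required factor of $|y|$ to pair with the algebraic weight.

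First, I would observe that since $f$ is odd, the solution $u = (\partial_{xx}-\gamma^2)^{-1} f$ is odd as well, and it suffices to estimate $u$ on $(0,\infty)$. On this half-line one can rewrite
\begin{align}
	G_\gamma^{\mathrm{odd}}(x,y) = \frac{e^{-\gamma|x-y|} - e^{-\gamma(x+y)}}{2\gamma} = \frac{\sinh(\gamma \min(x,y))}{\gamma}\, e^{-\gamma \max(x,y)},
\end{align}
which exposes the cancellation between the two exponentials at the origin.

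Next, using $\sinh(\gamma z)/\gamma = z\int_0^1 \cosh(\gamma z t)\, dt$ for $z \geq 0$, together with $|\cosh(w)| \leq e^{|\Re w|}$, I would deduce
\begin{align}
	|G_\gamma^{\mathrm{odd}}(x,y)| \;\leq\; \min(x,y)\, e^{-(\Re \gamma)|x-y|}
\end{align}
for all $x, y \geq 0$ and $\Re \gamma > 0$. Under the sector condition $\Re \gamma \geq \tfrac12 |\Im \gamma|$ one has $\Re \gamma \geq |\gamma|/\sqrt{5}$, so $\Re \gamma$ is comparable to $|\gamma|$.

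Writing $\alpha = \Re \gamma$ and applying Fubini, the remaining task is to evaluate
\begin{align}
	I(y) = \int_0^\infty \min(x,y)\, e^{-\alpha|x-y|}\, dx,
\end{align}
which a direct computation shows is bounded by $2y/\alpha$ uniformly in $y \geq 0$. This yields
\begin{align}
	\|u\|_{L^1(\R)} = 2 \int_0^\infty |u(x)|\, dx \;\leq\; \frac{4}{\alpha}\int_0^\infty y\, |f(y)|\, dy \;\leq\; \frac{C}{|\gamma|}\, \|f\|_{L^1_{1,1}},
\end{align}
where in the last step I use that the weight $\rho_{1,1}(y)$ is comparable to $\langle y \rangle$ and that the oddness of $f$ means the half-line norm controls the full $L^1_{1,1}$ norm.

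The main obstacle I anticipate is simply recognizing the factorization through $\sinh$: without it, the naive bound $|G_\gamma^{\mathrm{odd}}| \lesssim |\gamma|^{-1} e^{-(\Re\gamma)|x-y|}$ would give only $\|u\|_{L^1} \lesssim |\gamma|^{-2}\|f\|_{L^1}$, which is the wrong power of $|\gamma|$. The cancellation at the image point $-y$ (built into the odd kernel) is precisely what converts one power of $|\gamma|^{-1}$ into a factor of $\min(x,y)$, and the algebraic weight in $L^1_{1,1}$ is exactly what is needed to absorb that factor.
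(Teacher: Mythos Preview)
Your proof is correct and follows essentially the same approach as the paper: both extract the cancellation in $G_\gamma^{\mathrm{odd}}$ to obtain a pointwise bound of the form $|G_\gamma^{\mathrm{odd}}(x,y)| \leq C\,\langle y\rangle\, e^{-c|\gamma|\,|x-y|}$ (the paper writes this via $|1-e^{-2\gamma y}|\leq C|\gamma|y$ in each half $x\gtrless y$, whereas you package it through the $\sinh$ factorization and $\min(x,y)$), and then integrate the exponential in $x$ to produce the factor $|\gamma|^{-1}$. The arguments are equivalent; your $\sinh$ formulation handles the two cases $x\lessgtr y$ uniformly, which is a minor cosmetic gain.
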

\begin{proof}
	First we establish the pointwise estimate 
	\begin{align}
		| G_\gamma^\mathrm{odd} (x,y) | \leq C e^{-c|\gamma| |x-y|} \langle y \rangle \label{e: right resolvent G odd pointwise estimate}
	\end{align}
	for $x, y \geq 0$, for some constants $C, c > 0$, and for any $\gamma$ with $\Re \gamma \ge \frac{1}{2} | \Im \gamma|$. To prove this estimate, first consider the case $x \geq y \geq 0$, for which we have 
	\begin{align*}
		G_\gamma^\mathrm{odd} (x,y) = \frac{1}{2 \gamma} e^{-\gamma (x-y)} \left( 1 - e^{-2 \gamma y} \right). 
	\end{align*}
	The restriction $\Re \gamma \geq \frac{1}{2} | \Im \gamma|$ implies that $-\Re \gamma \leq -c |\gamma|$ for some constant $c > 0$. Together with the fact that $| 1 - e^z | \leq C |z|$ for $\Re z \leq 0$, we thereby obtain 
	\begin{align*}
		|G_\gamma^\mathrm{odd}(x,y) | \leq \frac{C}{|\gamma|} e^{-c|\gamma| (x-y)} (2 |\gamma| |y|) \leq C e^{-c|\gamma| |x-y|} \langle y \rangle
	\end{align*}
	for $x \geq y \geq 0$. For $y \geq x \geq 0$, the same argument with the roles of $x$ and $y$ interchanged leads to the estimate 
	\begin{align*}
		|G_\gamma^\mathrm{odd} (x,y)| \leq C e^{-c|\gamma| (y-x)} \langle x \rangle \leq C e^{-c|\gamma| |x-y|}  \langle y \rangle,
	\end{align*}
	since $\langle x \rangle \leq \langle y \rangle$ for $0 \leq x \leq y$, and hence we have the desired pointwise estimate. Using this estimate, we obtain 
	\begin{align*}
		\| (\partial_{xx} - \gamma^2)^{-1} f \|_{L^1} &\leq C \int_\R \left| \int_0^\infty e^{-c |\gamma| |x-y|} \langle y \rangle | f(y) | \, dy \right| \, dx \\
		&\leq C \int_\R \langle y \rangle | f(y) | \, \int_\R e^{-c |\gamma| |x-y|} \, dx \, dy \\
		&\leq \frac{C}{|\gamma|} \| f \|_{L^1_{1,1}}, 
	\end{align*}
	as desired. 
\end{proof}

The following estimate establishes boundedness of the resolvent in $L^\infty$ provided an extra factor of exponential localization, and is useful in passing resolvent estimates onto the core terms in our far-field/core decomposition. 
\begin{lemma}\label{l: right resolvent exponentially localized boundedness}
	Fix $\eta > 0$. There exist positive constants $C$ and $\delta$ such that for any odd $f \in L^1(\R)$, we have 
	\begin{align}
		\sup_{x \geq 0} e^{-\eta x} | (\partial_{xx}-\gamma^2)^{-1} f (x) | \leq C \| f \|_{L^1} 
	\end{align}
	for all $\gamma \in B(0, \delta)$ with $\Re \gamma \geq 0$. 
\end{lemma}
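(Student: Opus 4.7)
The approach is to establish a uniform pointwise bound on the integral kernel $e^{-\eta x}|G_\gamma^{\mathrm{odd}}(x,y)|$ on the quadrant $x, y \geq 0$, from which the lemma follows immediately by the triangle inequality under the integral sign.

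First, I would rewrite $G_\gamma^{\mathrm{odd}}$ in the two factored forms used in the proof of Lemma \ref{l: p resolvent right L11 to L1}, namely
\begin{align*}
G_\gamma^{\mathrm{odd}}(x,y) = \begin{cases} \dfrac{e^{-\gamma(x-y)}\left(1-e^{-2\gamma y}\right)}{2\gamma}, & x \geq y \geq 0, \\[6pt] \dfrac{e^{-\gamma(y-x)}\left(1-e^{-2\gamma x}\right)}{2\gamma}, & y \geq x \geq 0. \end{cases}
\end{align*}
For $\Re \gamma \geq 0$ and $z \geq 0$ we have $|e^{-\gamma z}| = e^{-z\Re \gamma} \leq 1$, and the elementary estimate $|1-e^w|\leq |w|$ valid for $\Re w \leq 0$ (proved by writing $1-e^w = -\int_0^1 w e^{sw}\,ds$) applied to $w = -2\gamma\min(x,y)$ gives $|1 - e^{-2\gamma \min(x,y)}| \leq 2|\gamma|\min(x,y)$. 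Combining these two observations cancels the apparent singularity $1/\gamma$ and yields the clean pointwise bound
\begin{align*}
|G_\gamma^{\mathrm{odd}}(x,y)| \leq \min(x,y) \qquad \text{for all } x,y \geq 0, \ \Re\gamma \geq 0,
\end{align*}
uniformly in $\gamma$ (so the radius $\delta$ can in fact be taken arbitrary).

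Next, since $e^{-\eta x} \leq e^{-\eta \min(x,y)}$, and $z e^{-\eta z} \leq (\eta e)^{-1}$ for all $z \geq 0$, we obtain the weighted kernel bound
\begin{align*}
e^{-\eta x}|G_\gamma^{\mathrm{odd}}(x,y)| \leq \min(x,y)\, e^{-\eta \min(x,y)} \leq \frac{1}{\eta e}
\end{align*}
uniformly in $x, y \geq 0$ and in $\gamma$ with $\Re\gamma \geq 0$. Substituting into the integral representation and using that $f$ is odd, one concludes
\begin{align*}
e^{-\eta x}|(\partial_{xx}-\gamma^2)^{-1} f(x)| \leq \int_0^\infty e^{-\eta x}|G_\gamma^{\mathrm{odd}}(x,y)|\,|f(y)|\,dy \leq \frac{1}{\eta e}\, \|f\|_{L^1},
\end{align*}
which is the claimed estimate after taking the supremum over $x \geq 0$.

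There is no serious obstacle here: the key observation is the cancellation between the $1/\gamma$ prefactor and the vanishing factor $1-e^{-2\gamma\min(x,y)}$, which trades the resolvent singularity as $\gamma \to 0$ for a linear growth in $\min(x,y)$ that is then absorbed by the exponential weight $e^{-\eta x}$. The restriction to odd functions is essential: it is precisely what produces the $1 - e^{-2\gamma\min(x,y)}$ factor in the kernel and thereby removes the $\gamma \to 0$ singularity, reflecting the same absorption mechanism into the strongly stable left-state dynamics exploited in \cite{AveryScheel, AveryScheelSelection}.
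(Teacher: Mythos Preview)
Your proof is correct and follows essentially the same approach as the paper's: both reduce to the pointwise kernel estimate $e^{-\eta x}|G_\gamma^{\mathrm{odd}}(x,y)| \leq C$ via the factored forms of $G_\gamma^{\mathrm{odd}}$ and the bound $|1-e^{w}|\leq |w|$ for $\Re w \leq 0$. Your version is in fact slightly cleaner, since by passing through the sharp bound $|G_\gamma^{\mathrm{odd}}(x,y)| \leq \min(x,y)$ and then $e^{-\eta x}\min(x,y) \leq (\eta e)^{-1}$ you obtain an explicit constant and need no smallness of $\gamma$, whereas the paper bounds by $\langle x\rangle$ and then invokes $\gamma$ small relative to $\eta$ to absorb the resulting exponential factors.
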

\begin{proof}
	The result follows from the pointwise estimate 
	\begin{align}
		e^{-\eta x}| G_\gamma^\mathrm{odd} (x,y) | \leq C \label{e: p right resolvent exponentially localized estimate}
	\end{align}
	for $x, y \geq 0$, and $\Re \gamma \geq 0$. To prove this, first consider $x \geq y \geq 0$. In this case, arguing as the preceding lemma, we have 
	\begin{align*}
		e^{-\eta x} | G_\gamma^\mathrm{odd} (x,y) | \leq C e^{-\eta x} \langle x \rangle | e^{-\gamma (x-y)}| = C e^{-\eta x} \langle x \rangle | e^{-\gamma x} | | e^{\gamma y}|. 
	\end{align*}
	Since $x \geq y$, we have $e^{-\eta x} \leq e^{-\frac{\eta}{2} x} e^{-\frac{\eta}{2} y}$, and hence if $\gamma$ is sufficiently small relative to $\eta$, $e^{-\eta x} \langle x \rangle | e^{-\gamma x} | | e^{\gamma y}|$ is bounded, so that \eqref{e: p right resolvent exponentially localized estimate} holds for $x \geq y \geq 0$. For $y \geq x \geq 0$, we again argue as in the preceding lemma to instead obtain
	\begin{align*}
			e^{-\eta x}| G_\gamma^\mathrm{odd} (x,y) | \leq C e^{-\eta x} \langle x \rangle | e^{-\gamma (y-x)}| = C e^{-\eta x} \langle x \rangle | e^{-\gamma y}| | e^{\gamma x}|\leq C, 
	\end{align*} 
	as desired. 
\end{proof}

Finally, we state a basic estimate which corresponds to the standard $L^1$-$L^\infty,$ $t^{-1/2}$ decay estimate in the heat equation, which will prove useful in establishing the same estimate for $e^{\mcl_\psi t}$. This estimate follows readily from Young's convolution inequality. 
\begin{lemma}\label{l: right resolvent L1 to Linf estimate}
	There exists a constant $C > 0$ such that for any $f \in L^1 (\R)$, we have 
	\begin{align}
		\| (\partial_{xx} - \gamma^2)^{-1} f \|_{L^\infty} \leq \frac{C}{|\gamma|} \| f \|_{L^1} 
	\end{align}
	for any $\gamma$ with $\Re \gamma \geq \frac{1}{2} | \Im \gamma |$. 
\end{lemma}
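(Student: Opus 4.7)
The plan is to represent the resolvent as a convolution with the standard Green's function of $\partial_{xx} - \gamma^2$ on the full line and apply Young's inequality in the form $\|G \ast f\|_{L^\infty} \le \|G\|_{L^\infty} \|f\|_{L^1}$. Unlike Lemma~\ref{l: p resolvent right L11 to L1}, here $f$ is not assumed odd, so I do not work with the half-line kernel $G_\gamma^{\mathrm{odd}}$ but instead with the full-line Green's function
\begin{align}
G_\gamma(x) = \frac{1}{2\gamma} e^{-\gamma|x|},
\end{align}
which satisfies $(\partial_{xx} - \gamma^2)^{-1} f = G_\gamma \ast f$ for any $f \in L^1(\R)$ provided $\Re\gamma > 0$ (so that $G_\gamma$ is integrable and decaying, and the usual variation-of-parameters formula is justified).

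Next, the sector condition $\Re\gamma \ge \tfrac12 |\Im\gamma|$ gives the crucial comparability $\Re\gamma \ge c|\gamma|$ for some $c>0$ (explicitly $c = 1/\sqrt{5}$). In particular $\Re\gamma > 0$ whenever $\gamma \neq 0$, so
\begin{align}
|G_\gamma(x)| = \frac{1}{2|\gamma|} e^{-(\Re\gamma)|x|} \le \frac{1}{2|\gamma|}
\end{align}
for every $x \in \R$, and therefore $\|G_\gamma\|_{L^\infty} \le \frac{1}{2|\gamma|}$.

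Finally, by Young's convolution inequality,
\begin{align}
\|(\partial_{xx}-\gamma^2)^{-1} f\|_{L^\infty} = \|G_\gamma \ast f\|_{L^\infty} \le \|G_\gamma\|_{L^\infty} \|f\|_{L^1} \le \frac{C}{|\gamma|} \|f\|_{L^1},
\end{align}
which is the claimed bound. There is no real obstacle here; the only subtlety is that the estimate degrades as $\gamma \to 0$, reflecting the branch point of $(\partial_{xx})^{-1}$ at the origin. This blow-up is harmless for later contour-integration arguments, where the $1/|\gamma|$ factor is absorbed by the Jacobian $d\lambda = 2\gamma\, d\gamma$ arising from the substitution $\lambda = \gamma^2$ used to unfold the branch point of $\Sigma^+$.
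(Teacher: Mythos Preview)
Your proof is correct and is precisely the argument the paper has in mind: the paper merely states that the estimate ``follows readily from Young's convolution inequality,'' and you have supplied exactly that computation with the full-line Green's function $G_\gamma(x)=\tfrac{1}{2\gamma}e^{-\gamma|x|}$.
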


\subsection{Full resolvent estimates for $\mcl_p$}\label{s: p full resolvent estimates}
In order to establish the equivalent of Lemma \ref{l: p resolvent right L11 to L1} for the full resolvent $(\mcl_p - \gamma^2)^{-1}$, we revisit the far-field/core decomposition used to prove Proposition \ref{p: p resolvent lipschitz} in greater generality in \cite{AveryScheel, AveryScheelSelection}. We first let $(\chi_-, \chi_c, \chi_+)$ be a partition of unity on $\R$ with
\begin{align*}
	\chi_+ (x) = \begin{cases}
		0, & x \leq 2, \\
		1, & x \geq 3,
	\end{cases}
\end{align*}
and $\chi_- (x) = \chi_+ (-x)$. Hence $\chi_c (x)$ is compactly supported. We decompose a given $f \in L^1_{0,1} (\R)$ as 
\begin{align}
	f = \chi_- f + \chi_c f + \chi_+ f =: f_- + f_c + f_+. 
\end{align}
We let $p^+$ solve 
\begin{align}
	(\mcl_p^+ - \gamma^2) p^+ = f_+^\mathrm{odd},
\end{align}
where $f_+^\mathrm{odd} = f_+ (x) - f_+ (-x)$ is the odd extension of $f_+$. We let $p^-$ solve 
\begin{align}
	(\mcl_p^- - \gamma^2) p^- = f_-,
\end{align}
and decompose the solution to $(\mcl_p - \gamma^2) p = f$ as 
\begin{align}
	p = p^- + p^c + \chi_+ p^+,
\end{align}
so that $p^c$ solves 
\begin{align}
	(\mcl_p - \gamma^2) p^c = \tilde{f}(\gamma) \label{e: p resolvent center equation}
\end{align}
with 
\begin{align}
	\tilde{f}(\gamma) = f_c + (\chi_+ - \chi_+^2) f - [\mcl_p^+, \chi_+] p^+ + (\mcl_p^+ - \mcl_p) (\chi_+ p^+) + (\mcl_p^- - \mcl_p) p^-,
\end{align}
where $[\mcl_p^+, \chi_+] = \mcl_p^+(\chi_+ \cdot) - \chi_+ \mcl_p^+ \cdot$ is the commutator. Since the coefficients of $\mcl_p$ attain their limits exponentially quickly as $x \to \pm \infty$ and the commutator $[\mcl_p^+, \chi_+]$ is compactly supported, $\tilde{f} (\gamma)$ is exponentially localized with rate uniform in $\gamma$ for $\gamma$ small. Utilizing also regularity of $p^+$ in $\gamma$, we see that $\tilde{f}(\gamma)$ has a well-defined limit at $\gamma = 0$, and we obtain the following detailed estimate, which is a special case of Lemma 3.7 of \cite{AveryScheelSelection}. 
\begin{lemma}\label{l: p resolvent tilde f localization}
	Let $\eta > 0$ be small and let $f \in L^1_{0,1} (\R)$. There exist positive constants $C$ and $\delta$ such that for $\gamma \in B(0, \delta)$ with $\Re \gamma \geq 0$, we have 
	\begin{align}
		\| \tilde{f}(\gamma) - \tilde{f}(0) \|_{L^1_{\mathrm{exp}, -\eta, \eta}} \leq C |\gamma| \| f \|_{L^1_{0,1}}. 
	\end{align}
\end{lemma}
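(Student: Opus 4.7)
The plan is to isolate the $\gamma$-dependent pieces of $\tilde f(\gamma)$. The terms $f_c$ and $(\chi_+-\chi_+^2)f$ are manifestly independent of $\gamma$, so they drop out of the difference $\tilde f(\gamma) - \tilde f(0)$. Writing $q^\pm(\gamma) := p^\pm(\gamma) - p^\pm(0)$, one is left with
\begin{equation*}
\tilde f(\gamma) - \tilde f(0) = -[\mcl_p^+,\chi_+]\, q^+(\gamma) + (\mcl_p^+-\mcl_p)(\chi_+\, q^+(\gamma)) + (\mcl_p^--\mcl_p)\, q^-(\gamma).
\end{equation*}
In each summand, the prefactor acting on $q^\pm$ is either compactly supported (the commutator) or exponentially localized at $\pm\infty$ (the far-field differences). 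It therefore suffices to establish Lipschitz-in-$\gamma$ estimates for $q^-$ in an exponentially weighted $L^1$ norm, and for $q^+$ together with $q^+_x$ in the pointwise $e^{-\eta x}$-weighted norm on the half-line $x\geq 0$.

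For $q^-$, we exploit that $\mcl_p^-$ is hyperbolic: its essential spectrum lies in $\{\Re\lambda \leq -2\}$, hence $(\mcl_p^--\gamma^2)^{-1}$ is analytic at $\gamma=0$ as an operator on $L^1_{\mathrm{exp},-\eta,\eta}$ for $\eta$ small. The resolvent identity gives $q^-(\gamma) = \gamma^2 (\mcl_p^--\gamma^2)^{-1}(\mcl_p^-)^{-1} f_-$ and therefore $\|q^-(\gamma)\|_{L^1_{\mathrm{exp},-\eta,\eta}} \leq C|\gamma|^2 \|f_-\|_{L^1_{\mathrm{exp},-\eta,\eta}} \leq C|\gamma|^2 \|f\|_{L^1_{0,1}}$, since $f_-$ is supported on $\{x \leq -2\}$ where the weight is uniformly bounded. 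Because $q^-$ inherits exponential decay in both directions from the spectral gap of $\mcl_p^-$ and $\mcl_p^- - \mcl_p$ has coefficients decaying exponentially at $-\infty$, multiplication by $(\mcl_p^- - \mcl_p)$ preserves the exponentially weighted $L^1$ bound and yields the desired $O(|\gamma|)$ contribution.

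For $q^+$, we work directly with the explicit half-line Green's function. The identity $G_\gamma^{\mathrm{odd}}(x,y) = e^{-\gamma x}\sinh(\gamma y)/\gamma$ for $x\geq y \geq 0$ (with the symmetric counterpart for $y \geq x$) shows that $\gamma \mapsto G_\gamma^{\mathrm{odd}}(x,y)$ is analytic at $\gamma=0$, with $G_0^{\mathrm{odd}}(x,y) = \min(x,y)$. Expanding in $\gamma$ and using $|e^{-\gamma x} - 1|\leq C|\gamma|x$ and $|\sinh(\gamma y)/\gamma - y|\leq C|\gamma|^2 y^3 e^{|\gamma|y}$ gives the pointwise Lipschitz bound
\begin{equation*}
|G_\gamma^{\mathrm{odd}}(x,y) - G_0^{\mathrm{odd}}(x,y)| + |\partial_x G_\gamma^{\mathrm{odd}}(x,y) - \partial_x G_0^{\mathrm{odd}}(x,y)| \leq C|\gamma|\,(x+y)\min(x,y)\,e^{\delta\min(x,y)}
\end{equation*}
for $x,y\geq 0$ and $\gamma\in B(0,\delta)\cap\{\Re\gamma\geq 0\}$. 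Integrating against $f_+^{\mathrm{odd}}$ and splitting the $y$-integral at $x$ produces $e^{-\eta x}(|q^+(x;\gamma)| + |q^+_x(x;\gamma)|) \leq C|\gamma|\, x\, e^{(\delta-\eta)x}\,\|f\|_{L^1_{0,1}}$, which is uniformly bounded in $x$ once $\delta < \eta$. Since the coefficients of $[\mcl_p^+,\chi_+]$ are compactly supported and those of $\mcl_p^+ - \mcl_p$ decay like $e^{-\beta x}$ at $+\infty$ for some $\beta > 0$, choosing $\eta < \beta$ absorbs the $e^{\eta x}$ growth and delivers the advertised $L^1_{\mathrm{exp},-\eta,\eta}$ bound on the $p^+$ contributions.

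The principal obstacle is the Lipschitz regularity at $\gamma=0$ for $q^+$: the full-line resolvent $(\partial_{xx}-\gamma^2)^{-1}$ carries an unavoidable $1/\gamma$ branch singularity at the origin, precluding any Lipschitz estimate directly. The odd symmetrization built into the far-field/core decomposition is precisely what cancels the singular part of the Green's function, at the mild price of spatial growth $\sim \langle x \rangle$ in the Lipschitz constant; this growth is then defeated by the compact or exponential localization of the surrounding operators, which is why the far-field/core splitting is so well adapted to the present analysis.
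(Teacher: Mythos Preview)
Your overall strategy matches the paper's (which defers the details to \cite{AveryScheelSelection}): isolate the $\gamma$-dependent pieces, handle $q^-$ via the spectral gap of $\mcl_p^-$, and handle $q^+$ via the explicit odd Green's function combined with the exponentially localized prefactors. The $q^+$ analysis is correct.

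There is, however, a concrete error in the $q^-$ step. You assert $\|f_-\|_{L^1_{\exp,-\eta,\eta}} \leq C\|f\|_{L^1_{0,1}}$ on the grounds that ``the weight is uniformly bounded'' on $\{x\leq -2\}$. This is false: on $\{x\leq -1\}$ the weight is $\omega_{-\eta,\eta}(x)=e^{-\eta x}=e^{\eta|x|}$, which blows up as $x\to -\infty$, while $\|f\|_{L^1_{0,1}}$ carries no exponential decay on the left. Hence neither $f_-$ nor $q^-(\gamma)$ need lie in $L^1_{\exp,-\eta,\eta}$, and the displayed chain of inequalities fails. The related claim that $q^-$ ``inherits exponential decay in both directions'' is likewise inaccurate: $q^-$ decays exponentially to the right (since $f_-$ is supported on $\{x\leq -2\}$ and the Green's function of $\mcl_p^-$ decays), but to the left it is only in $L^1$. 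The repair is not to force $q^-$ into the weighted space, but to estimate it in unweighted $W^{2,1}$ via the resolvent identity, $\|q^-(\gamma)\|_{W^{2,1}}\leq C|\gamma|^2\|f\|_{L^1}$, and then let the prefactor $(\mcl_p^--\mcl_p)$ supply the exponential localization on the left: for $x\leq -1$ this operator is multiplication by $3(q_*^2-1)$, which decays like $e^{(\sqrt2-1)x}$, so for $\eta<\sqrt2-1$ the product $\omega_{-\eta,\eta}\cdot(\mcl_p^--\mcl_p)$ is bounded there. On the right one instead uses the exponential decay of $q^-$ itself to absorb the weight $e^{\eta x}$.
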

With exponential localization of $\tilde{f}(\gamma)$, we solve \eqref{e: p resolvent center equation} via the far-field/core ansatz
\begin{align}
	p^c (x) = w (x) + b \chi_+ (x) e^{-\gamma x}. 
\end{align}
Inserting this ansatz into \eqref{e: p resolvent center equation} gives an equation
\begin{align}
	F(w, b; \gamma) = \tilde{f}(\gamma),
\end{align}
where 
\begin{align}
	F(w, b; \gamma) = \mcl_p w + b \mcl_p (\chi_+ e^{-\gamma \cdot}) - \gamma^2 (w + b \chi_+ e^{-\gamma \cdot}). 
\end{align}
Using Fredholm properties of $\mcl_p$ on exponentially weighted spaces, we obtain the following invertibility of $F$. See \cite{AveryScheel,AveryScheelSelection} for further details. We also carry out a similar argument in the following section in order to obtain estimates on $(\mcl_\psi-\gamma^2)^{-1}$. 
\begin{prop}\label{p: p resolvent invertibility}
	For $\eta > 0$ sufficiently small, there exists a $\delta> 0$ such that for each $\gamma \in B(0, \delta)$, the map 
	\begin{align}
		(w,b) \mapsto F(w, b; \gamma) : W^{2, 1}_{\mathrm{exp}, -\eta, \eta} (\R) \times \C \to L^1_{\mathrm{exp}, -\eta, \eta} (\R)
	\end{align}
	is invertible. We thereby denote the solution to $F( w, b; \gamma) = \tilde{f}$ by 
	\begin{align}
		(w,b) = (T(\gamma) \tilde{f}, B(\gamma) \tilde{f}),
	\end{align}
	with analytic maps 
	\begin{align}
		\gamma \mapsto (T(\gamma), B(\gamma)) : B(0,\delta) \to \mathcal{B}(L^1_{\mathrm{exp},-\eta,\eta}, W^{2,1}_{\mathrm{exp},-\eta, \eta}) \times \mathcal{B} (L^1_{\mathrm{exp},-\eta,\eta}, \C). 
	\end{align}
\end{prop}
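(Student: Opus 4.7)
The plan is to prove invertibility of $F(\cdot, \cdot; \gamma)$ first at $\gamma = 0$ using Fredholm theory of asymptotically autonomous elliptic operators on exponentially weighted spaces, and then extend to a neighborhood by analytic perturbation. Analyticity of
\begin{align*}
F(w, b; \gamma) = \mcl_p w + b \mcl_p(\chi_+ e^{-\gamma \cdot}) - \gamma^2 (w + b \chi_+ e^{-\gamma \cdot})
\end{align*}
in $\gamma$ as a family of bounded operators from $W^{2,1}_{\mathrm{exp},-\eta,\eta} \times \C$ to $L^1_{\mathrm{exp},-\eta,\eta}$ is immediate, so the hard work is concentrated at $\gamma = 0$.

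I would first compute the Fredholm borders of $\mcl_p$ on $L^1_{\mathrm{exp},-\eta,\eta}$ by conjugating $\mcl_p^\pm$ with $\omega_{-\eta, \eta}$. The right border becomes the parabola $\{\eta^2 - k^2 - 2 i \eta k : k \in \R\}$ with vertex $\eta^2 > 0$, and the left border shifts to a parabola with vertex $\eta^2 + 2\eta - 2 < 0$ for $\eta$ small. Hence $\lambda = 0$ lies in the Fredholm region, and Palmer's index count (the characteristic roots of $\mcl_p^+ - 0$ are $\nu = 0, 0$, both above the threshold $-\eta$ set by the right weight; the roots of $\mcl_p^- - 0$ are $\nu = -1 \pm \sqrt{3}$, one above and one below the left threshold $+\eta$) yields Fredholm index $-1$ for $\mcl_p$ at $\lambda = 0$. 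Equivalently, the marginally decaying mode $e^{-\gamma x}|_{\gamma = 0} = 1$ at $+\infty$ is expelled from the range by the weight $e^{\eta x}$, producing a one-dimensional cokernel spanned by an exponentially localized adjoint eigenfunction.

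The ansatz $p^c = w + b \chi_+ e^{-\gamma \cdot}$ reinserts exactly this mode, augmenting the domain by one dimension so that $F(\cdot, \cdot; 0)$ becomes Fredholm of index $0$. Triviality of the kernel then amounts to ruling out nonzero $(w, b)$ with $\mcl_p(w + b \chi_+) = 0$; setting $v := w + b \chi_+$, such a $v$ would be a bounded classical solution of $\mcl_p v = 0$, decaying exponentially at $-\infty$ and tending to the constant $b$ at $+\infty$. Absence of such a resonance at the edge of the essential spectrum is exactly the spectral stability of the critical front in the weighted space, established in \cite{AveryScheel, AveryScheelSelection}. Combined with index $0$, this gives invertibility of $F(\cdot, \cdot; 0)$.

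Invertibility extends to $\gamma \in B(0, \delta)$ for $\delta$ small by a standard Neumann series argument: writing $F(\cdot, \cdot; \gamma) = F(\cdot, \cdot; 0) + R(\gamma)$ with $R$ analytic and $R(0) = 0$, the bound $\| F(\cdot, \cdot; 0)^{-1} R(\gamma) \| < 1$ holds for $|\gamma|$ small, and analyticity of $(T(\gamma), B(\gamma))$ in $\gamma$ with values in the stated spaces follows immediately from the Neumann representation of $F(\cdot, \cdot; \gamma)^{-1}$. The main obstacle I anticipate is the kernel-triviality step at $\gamma = 0$: while the Fredholm index count is routine, ruling out a resonant neutral mode requires genuine input from the spectral stability of the critical Fisher-KPP front, rather than purely operator-theoretic arguments.
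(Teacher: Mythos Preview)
Your proposal is correct and matches the paper's approach (stated here by reference to \cite{AveryScheel, AveryScheelSelection} and carried out in detail for $\mcl_\psi$ in Section~\ref{s: psi resolvent estimates}): Fredholm index $-1$ for $\mcl_p$ on $L^1_{\mathrm{exp},-\eta,\eta}$, bordering with the one-dimensional parameter $b$ to reach index $0$, trivial kernel from the absence of a bounded solution to $\mcl_p v = 0$, and analytic perturbation. The only point to sharpen is that analyticity of $\gamma \mapsto F(\cdot,\cdot;\gamma)$ is not quite immediate from the displayed formula, since $\chi_+ e^{-\gamma \cdot} \notin L^1_{\mathrm{exp},-\eta,\eta}$ for small $\gamma$; one first rewrites $(\mcl_p - \gamma^2)(\chi_+ e^{-\gamma \cdot}) = (\mcl_p - \mcl_p^+)(\chi_+ e^{-\gamma \cdot}) + [\mcl_p^+, \chi_+] e^{-\gamma \cdot}$ using $(\mcl_p^+ - \gamma^2)e^{-\gamma \cdot} = 0$, after which uniform exponential localization and hence analyticity are clear (cf.\ the proof of the analogous lemma for $\mcl_\psi$).
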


We use this proposition to solve for $p^c$ and obtain the following estimates on the full resolvent. 

\begin{prop}\label{p: p resolvent L11 to L1 estimate}
	There exist positive constants $C$ and $\delta$ such that for any $f \in L^1_{0,1} (\R)$, we have 
	\begin{align}
		\| (\mcl_p-\gamma^2)^{-1} f \|_{W^{1,1}} \leq \frac{C}{|\gamma|} \| f \|_{L^1_{0,1}}
	\end{align}
	for all $\gamma \in B(0,\delta)$ with $\Re \gamma \geq \frac{1}{2} | \Im \gamma |$. 
\end{prop}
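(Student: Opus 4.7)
The plan is to pass to $W^{1,1}$ the far-field/core decomposition $p = p^- + p^c + \chi_+ p^+$ introduced before Proposition \ref{p: p resolvent invertibility} and estimate each of the three summands separately. Because $f \in L^1_{0,1}(\R)$ and the cutoffs localize to $|x| \geq 2$, the odd extension $f_+^\mathrm{odd}$ satisfies $\|f_+^\mathrm{odd}\|_{L^1_{1,1}} \leq C \|f\|_{L^1_{0,1}}$ and the truncation obeys $\|f_-\|_{L^1} \leq C \|f\|_{L^1_{0,1}}$. Lemma \ref{l: p resolvent right L11 to L1} immediately yields $\|p^+\|_{L^1} \leq (C/|\gamma|) \|f\|_{L^1_{0,1}}$ under the sectorial restriction $\Re \gamma \geq \tfrac12 |\Im \gamma|$.

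For the $L^1$ norm of $\partial_x p^+$, I would differentiate the odd Green's function explicitly. For $x \geq y \geq 0$, the cancellation factor $1 - e^{-2 \gamma y}$ persists in $\partial_x G_\gamma^\mathrm{odd}(x,y)$, producing a pointwise bound $C |\gamma| \langle y \rangle e^{-c |\gamma|(x-y)}$; for $0 \leq x < y$, the derivative is only controlled by $C e^{-c|\gamma||x-y|}$, without a $\langle y \rangle$ factor. Integrating in $x$ gives $\int_0^\infty |\partial_x G_\gamma^\mathrm{odd}(x,y)|\,dx \leq C\langle y\rangle + C/|\gamma|$, so $\|\partial_x p^+\|_{L^1} \leq C \|f_+^\mathrm{odd}\|_{L^1_{1,1}} + (C/|\gamma|) \|f_+^\mathrm{odd}\|_{L^1} \leq (C/|\gamma|) \|f\|_{L^1_{0,1}}$ for $|\gamma|$ small. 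Multiplying through by $\chi_+$ and absorbing the commutator $\chi_+' p^+$ into the $L^1$ estimate on $p^+$ yields the $W^{1,1}$ bound on $\chi_+ p^+$.

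For $p^-$, the dispersion curve $\Sigma_p^-$ sits strictly in the left half-plane with spectral gap of size $2$, so $(\mcl_p^- - \gamma^2)^{-1} : L^1 \to W^{2,1}$ is bounded uniformly for $\gamma$ in a small ball about the origin, giving $\|p^-\|_{W^{1,1}} \leq C \|f\|_{L^1_{0,1}}$. For the center piece, Proposition \ref{p: p resolvent invertibility} produces $w \in W^{2,1}_{\mathrm{exp},-\eta,\eta}$ and $b \in \C$ with $\|w\|_{W^{2,1}_{\mathrm{exp},-\eta,\eta}} + |b| \leq C \|\tilde f(\gamma)\|_{L^1_{\mathrm{exp},-\eta,\eta}}$, and Lemma \ref{l: p resolvent tilde f localization} together with the $\gamma = 0$ bound $\|\tilde f(0)\|_{L^1_{\mathrm{exp},-\eta,\eta}} \leq C \|f\|_{L^1_{0,1}}$ gives the same bound for all $\gamma$ in the disk. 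Since the weight $\omega_{-\eta,\eta}$ is bounded below by a positive constant on $\R$, we conclude $\|w\|_{W^{1,1}} \leq C \|w\|_{W^{2,1}_{\mathrm{exp},-\eta,\eta}} \leq C \|f\|_{L^1_{0,1}}$.

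The remaining term $b \chi_+ e^{-\gamma \cdot}$ is where the sectorial restriction becomes essential. A direct computation gives $\|\chi_+ e^{-\gamma \cdot}\|_{L^1} \leq C/\Re \gamma$ and $\|\partial_x (\chi_+ e^{-\gamma \cdot})\|_{L^1} \leq C + C |\gamma|/\Re \gamma$; the hypothesis $\Re \gamma \geq \tfrac12 |\Im \gamma|$ forces $\Re \gamma \geq c |\gamma|$, so both contributions are dominated by $C/|\gamma|$, yielding $\|b \chi_+ e^{-\gamma \cdot}\|_{W^{1,1}} \leq (C/|\gamma|) \|f\|_{L^1_{0,1}}$. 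Summing the three estimates proves the claim. The principal obstacle is this last term: without the sectorial restriction it is not in $L^1$ at all when $\Re \gamma = 0$, and the $1/|\gamma|$ blow-up reflects precisely the marginal stability of $\mcl_p$ at the origin that will translate, after the contour deformations of Section \ref{s: linear estimates}, into the $t^{-1/2}$ decay of $e^{\mcl_p t}$ in $L^1$.
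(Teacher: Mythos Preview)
Your proof is correct and follows essentially the same far-field/core strategy as the paper: decompose $p = p^- + p^c + \chi_+ p^+$, invoke Lemma \ref{l: p resolvent right L11 to L1} for $p^+$, the spectral gap of $\mcl_p^-$ for $p^-$, and Proposition \ref{p: p resolvent invertibility} with Lemma \ref{l: p resolvent tilde f localization} for $p^c = w + b\chi_+ e^{-\gamma\cdot}$, isolating the $|\gamma|^{-1}$ blow-up in $\|\chi_+ e^{-\gamma\cdot}\|_{L^1}$. The only difference is that the paper dispenses with the derivative estimate in one sentence (``the estimate on the derivative is strictly easier''), whereas you carry out the computation of $\partial_x G_\gamma^{\mathrm{odd}}$ explicitly; your added detail is correct and in fact clarifies why the derivative bound is no worse than the $L^1$ bound.
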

\begin{proof}
	The estimate on the derivative is strictly easier, so we focus only on estimating $(\mcl_p - \gamma^2)^{-1} f$ in $L^1$. The desired estimates for $\chi_+ p^+$ and $p^-$ follow from Lemma \ref{l: p resolvent right L11 to L1} and the fact that the spectrum of $\mcl_p^-$ does not contain the origin, so we only need to establish the estimate for $p^c$. By Proposition \ref{p: p resolvent invertibility}, we have 
	\begin{align*}
		p^c (\gamma) = T(\gamma) \tilde{f}(\gamma) + B(\gamma) \tilde{f}(\gamma) \chi_+ e^{-\gamma \cdot}. 
	\end{align*}
	Using the boundedness of $T(\gamma)$ together with the estimates in Lemma \ref{l: p resolvent tilde f localization}, we readily obtain 
	\begin{align*}
		\| T(\gamma) \tilde{f}(\gamma) \|_{L^1} \leq C \| T(\gamma)\|_{L^1_{\mathrm{exp}, -\eta, \eta} \to W^{2,1}_{\mathrm{exp}, -\eta, \eta}} \| \tilde{f} (\gamma) \|_{L^1_{\mathrm{exp},-\eta,\eta}} \leq C \| f \|_{L^1_{0,1}}. 
	\end{align*}
	For the other term, similarly using boundedness of $B(\gamma)$, we have 
	\begin{align*}
		\| B(\gamma) \tilde{f}(\gamma) \chi_+ e^{-\gamma \cdot} \|_{L^1} \leq C \| f \|_{L^1_{0,1}} \| \chi_+ e^{-\gamma \cdot}\|_{L^1}. 
	\end{align*}
	Since 
	\begin{align*}
		\| \chi_+ e^{-\gamma \cdot} \|_{L^1} \leq C \int_0^\infty e^{-c |\gamma| x} \, dx \leq \frac{C}{|\gamma|}
	\end{align*}
	for $\Re \gamma \geq \frac{1}{2} | \Im \gamma|$, we obtain 
	\begin{align*}
		\| p^c (\gamma) \|_{L^1} \leq \frac{C}{|\gamma|} \| f \|_{L^1_{0,1}},
	\end{align*}
	which completes the proof of the proposition. 
	
\end{proof}

\section{Resolvent estimates for $\mcl_\psi$}\label{s: psi resolvent estimates}
We use the same overall far-field/core approach to analyze the resolvent of $\mcl_\psi$. The main difference is that the spectrum of $\mcl_\psi^-$ is also marginally stable, in addition to that of $\mcl_\psi^+$, so that we have to take into account neutrally stable modes arising from behavior on the left as well as from the right. We first establish spectral stability, ruling out unstable eigenvalues and the possibility of an embedded ``eigenvalue'' (with a bounded eigenfunction) at the origin.  

\begin{lemma}\label{l: Lpsi spectral stability}
	The operator $\mcl_\psi : W^{2, \ell} (\R) \subseteq L^\ell (\R) \to L^\ell(\R)$ has no eigenvalues with $\Re \lambda \geq 0$ for any $1 \leq p \leq \infty$. Furthermore, there is no bounded solution to $\mcl_\psi \psi = 0$. 
\end{lemma}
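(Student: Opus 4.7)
The plan is to exploit the conjugation identity $\mcl_\psi = (\omega q_*)\,\mcl_\phi\,(\omega q_*)^{-1}$, where $\mcl_\phi := \partial_x^2 + (2 + 2 q_*'/q_*)\partial_x$ is the linearization in the unweighted phase variable. Under the substitution $\phi := (\omega q_*)^{-1}\psi$, any eigenvalue problem $\mcl_\psi \psi = \lambda\psi$ becomes $\mcl_\phi \phi = \lambda \phi$, and the key structural observation is that $\mcl_\phi$ is of Sturm-Liouville type,
\begin{align*}
\mcl_\phi \phi = \frac{1}{\mu}\bigl(\mu \phi'\bigr)', \qquad \mu := e^{2x} q_*^2 > 0,
\end{align*}
which opens the door to an energy identity; moreover the gauge mode $\phi_1 \equiv 1$ supplies an explicit element of $\ker \mcl_\phi$.

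For $\Re \lambda \geq 0$ with $\lambda \neq 0$, I would first observe that $\lambda$ lies strictly to the right of the dispersion curves $\Sigma^+ \cup \Sigma_\psi^-$, both of which are contained in $\{\Re \lambda \leq 0\}$ and meet $i\R$ only at the origin, so the spatial eigenvalues of the asymptotic operators $\mcl_\psi^\pm - \lambda$ all have nonzero real part. Standard asymptotic ODE / exponential dichotomy arguments then force any $L^\ell$-eigenfunction $\psi$, together with its first two derivatives, to decay exponentially as $x \to \pm \infty$, which places $\phi$ and $\phi'$ in $L^2(\R;\mu\,dx)$ and causes the boundary terms to vanish in the integration by parts of $\int (\mu \phi')' \bar\phi\,dx = \lambda \int \mu |\phi|^2\,dx$. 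This yields
\begin{align*}
-\int_\R \mu |\phi'|^2\,dx \;=\; \lambda \int_\R \mu |\phi|^2\,dx,
\end{align*}
whose left side is real and non-positive. Taking real parts forces $\phi \equiv 0$ when $\Re \lambda > 0$, and taking imaginary parts forces the same conclusion when $\Re \lambda = 0$ and $\Im \lambda \neq 0$.

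The case $\lambda = 0$, together with the claim on bounded kernel elements, requires a direct ODE analysis since solutions at the edge of the essential spectrum need not decay exponentially. By reduction of order, $\mcl_\phi \phi = 0$ is spanned by $\phi_1 \equiv 1$ and $\phi_2(x) := \int_0^x \mu(y)^{-1}\,dy$. Using the critical-front leading-edge asymptotic $q_*(x) \sim A x e^{-x}$ as $x \to +\infty$ (so that $\mu \sim A^2 x^2$) and $q_* \to 1$ at the wake (so $\mu \sim e^{2x}$ as $x \to -\infty$), $\phi_2$ has a finite limit $C$ at $+\infty$ and diverges like $-\tfrac{1}{2}e^{-2x}$ at $-\infty$. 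Translating back through $\psi = \omega q_* \phi$, the two solutions $\psi_1 = \omega q_*$ and $\psi_2 = \omega q_* \phi_2$ grow linearly at $+\infty$ and exponentially at $-\infty$ respectively, and a short asymptotic calculation shows that no nontrivial linear combination $a \psi_1 + b \psi_2$ can be bounded on all of $\R$: tuning $a + bC = 0$ to cancel the linear growth at $+\infty$ still leaves a combination that is bounded at $+\infty$ but blows up exponentially at $-\infty$. This simultaneously rules out bounded kernel elements of $\mcl_\psi$ and any $L^\ell$-eigenvalue at $\lambda = 0$.

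The main technical step is justifying the exponential decay of $L^\ell$-eigenfunctions for $\Re \lambda \geq 0, \lambda \neq 0$; this reduces to checking that the spatial eigenvalues of $\mcl_\psi^\pm - \lambda$ stay off $i\R$ throughout this range — in particular that $\Re \sqrt{1+\lambda} > 1$ for such $\lambda$ — so that the $L^\ell$-integrability condition genuinely selects exponentially decaying spatial modes at both ends, validating the integration by parts.
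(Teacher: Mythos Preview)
Your proof is correct and follows essentially the same route as the paper. The paper's proof invokes a Sturm--Liouville argument (citing Sattinger) based on the positive solution $\psi = \omega q_*$ of $\mcl_\psi \psi = 0$ to rule out eigenvalues with $\Re\lambda \geq 0$; your conjugation $\mcl_\psi = (\omega q_*)\mcl_\phi(\omega q_*)^{-1}$ together with the energy identity for $\mcl_\phi = \mu^{-1}\partial_x(\mu\,\partial_x)$ is precisely that Sturm--Liouville argument made explicit. For the bounded-kernel claim, the paper uses exponential dichotomies to identify $\omega q_*$ as the unique solution bounded on $x < 0$ and observes that it is unbounded on $x > 0$; your reduction-of-order computation yields the same conclusion and in fact matches the alternative argument the paper sketches in its last sentence (solving the first-order ODE for $\phi_x$).
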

\begin{proof}
	The linear equation $\mcl_\psi \psi = 0$ has a positive pointwise solution $\psi = \omega q_*$. A Sturm-Liouville argument therefore implies that there can be no eigenvalues with non-negative real part; see for instance \cite[proof of Theorem 5.5]{Sattinger}. Using standard theory of exponential dichotomies, one concludes that $\omega q_*$ is the only solution which is bounded for $x < 0$. Since $\omega q_*$ is unbounded on $x > 0$, we therefore see that there is no solution which is bounded on the whole real line, as desired. Alternatively, one can revert to $\phi$ coordinates and explicitly solve the resulting first-order ODE for $u = \phi_x$ to find the other linearly independent solution which is unbounded on $x < 0$.
\end{proof}

\subsection{Resolvent estimates for $\mcl_\psi^-$}
Here we study the resolvent for the limiting operator on the left in the $\psi$ linearization, $\mcl_\psi^- = \partial_{xx}+ 2 \partial_x$. This resolvent is given by 
\begin{align}
	(\mcl_\psi^- - \lambda)^{-1} f (x) = \int_\R G_\lambda^- (x-y) f(y) \, dy, 
\end{align}
where $G_\lambda^-$ is the resolvent kernel 
\begin{align}
	G_\lambda^- (x) = - \frac{1}{2 \sqrt{1+ \lambda}} \begin{cases}
		e^{\nu^-(\lambda) x}, & x \geq 0, \\
		e^{\nu^+(\lambda) x}, & x < 0,
	\end{cases} \label{e: G lambda minus}
\end{align}
with spatial eigenvalues $\nu^\pm(\lambda)$ given by 
\begin{align}
	\nu^\pm(\lambda) = - 1 \pm \sqrt{1 + \lambda}. 
\end{align}
Since the dispersion curve associated to $\mcl^-_\psi$ has no branch points, the resolvent kernel $G_\lambda^-$ is pointwise analytic in $\lambda$ in a neighborhood of the origin. Using the formula \eqref{e: G lambda minus} for the resolvent kernel, one readily obtains the following lemma. 
\begin{lemma}\label{l: psi left resolvent L1 to Linf boundedness}
	There exist positive constants $C$ and $\delta$ such that for any $f \in L^1 (\R)$, we have 
	\begin{align}
		\| (\mcl_\psi^- - \lambda)^{-1} f \|_{L^\infty} \leq C \| f \|_{L^1}
	\end{align}
	for all $\lambda \in B(0,\delta)$ to the right of the essential spectrum of $\mcl_\psi^-$. 
\end{lemma}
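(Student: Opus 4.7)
The estimate is an $L^1 \to L^\infty$ bound, which by Young's convolution inequality reduces to showing that the kernel $G_\lambda^-$ has a uniformly bounded $L^\infty$ norm for $\lambda$ in a small ball to the right of $\Sigma_\psi^-$. My plan is therefore to simply read off the $L^\infty$ norm of $G_\lambda^-$ from the explicit formula \eqref{e: G lambda minus}, after verifying the sign conditions on $\Re \nu^\pm(\lambda)$.

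First I would recall from \eqref{e: G lambda minus} that
\begin{align*}
  G_\lambda^-(x) = -\frac{1}{2\sqrt{1+\lambda}} \begin{cases} e^{\nu^-(\lambda) x}, & x \geq 0, \\ e^{\nu^+(\lambda) x}, & x < 0, \end{cases}
\end{align*}
so that pointwise $|G_\lambda^-(x)| \leq \frac{1}{2|\sqrt{1+\lambda}|} \max\bigl( \sup_{x \geq 0} |e^{\nu^-(\lambda) x}|, \sup_{x < 0} |e^{\nu^+(\lambda) x}| \bigr)$. For $\lambda$ in a sufficiently small ball around the origin, $|\sqrt{1+\lambda}| \geq 1/2$, so the prefactor is uniformly bounded.

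The key observation is that on the locus $\lambda = -k^2 + 2ik$ which parametrizes $\Sigma_\psi^-$, the spatial eigenvalue $\nu^+(\lambda) = -1 + \sqrt{1+\lambda}$ lies exactly on the imaginary axis, namely $\nu^+ = ik$. Moving $\lambda$ to the right of the parabola near the origin, $\nu^+(\lambda)$ moves into the open right half-plane (by a direct expansion, $\nu^+ \sim \lambda/2$), while $\nu^-(\lambda) = -2 - \nu^+(\lambda)$ remains in the open left half-plane. Consequently $|e^{\nu^-(\lambda) x}| \leq 1$ for $x \geq 0$ and $|e^{\nu^+(\lambda) x}| \leq 1$ for $x < 0$, yielding $\|G_\lambda^-\|_{L^\infty} \leq C$ uniformly, and Young's inequality then gives the stated bound.

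There is no real obstacle here; the only thing to watch is the uniformity as $\lambda$ approaches the parabola $\Sigma_\psi^-$, where $\Re \nu^+(\lambda) \to 0$. Since the pointwise bounds $|e^{\nu^\pm x}| \leq 1$ hold as long as the relevant real part has the correct sign (weakly, even), the estimate survives up to the boundary in the $L^\infty$ norm — it is precisely the fact that $G_\lambda^-$ need not be integrable near $\lambda = 0$ that forces us to work with $L^1 \to L^\infty$ rather than $L^1 \to L^1$ bounds for this operator.
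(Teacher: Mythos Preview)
Your proposal is correct and is exactly the approach the paper has in mind: the paper simply states that the lemma follows readily from the explicit formula \eqref{e: G lambda minus} for the resolvent kernel, and your argument via Young's inequality and the sign conditions $\Re \nu^-(\lambda) < 0$, $\Re \nu^+(\lambda) \geq 0$ for $\lambda$ to the right of $\Sigma_\psi^-$ is precisely how one fills in that detail.
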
 

We next analyze the regularity in $\lambda$ of the spatial derivative of $(\mcl_\psi^- - \lambda)^{-1} f$ in order to later characterize time decay of derivatives of $\psi$. 

\begin{lemma}\label{l: psi left resolvent derivative Linf estimate}
	There exist positive constants $C$ and $\delta$ and a bounded limiting operator $D^{-,0}_\psi : L^1 (\R) \to L^\infty(\R)$ such that for any $f \in L^1 (\R)$, we have 
	\begin{align}
		\| [ \partial_x (\mcl_\psi^- - \lambda)^{-1} - D^{-,0}_\psi ] f \|_{L^\infty} \leq C |\lambda| \| f \|_{L^1}, \label{e: psi left resolvent derivative l infty}
	\end{align}
	for all $\lambda \in B(0,\delta)$ to the right of the essential spectrum of $\mcl_{\psi}^-$. 
\end{lemma}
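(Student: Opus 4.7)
The plan is to differentiate the explicit formula \eqref{e: G lambda minus} for the resolvent kernel, define the limiting operator $D^{-,0}_\psi$ as convolution against the pointwise limit $\partial_x G_0^-$, and then reduce the desired bound \eqref{e: psi left resolvent derivative l infty} to a pointwise kernel estimate. Since $G_\lambda^-$ is continuous at $x = 0$ (the two branches of \eqref{e: G lambda minus} agree at $x=0$), its distributional derivative coincides with the classical piecewise derivative, so there are no Dirac contributions and the regions $x>0$ and $x<0$ can be handled independently. Young's convolution inequality $\|K * f\|_{L^\infty} \leq \|K\|_{L^\infty} \|f\|_{L^1}$ then reduces the lemma to the pointwise bound
\begin{equation*}
\| \partial_x G_\lambda^- - \partial_x G_0^- \|_{L^\infty(\R)} \leq C |\lambda|,
\end{equation*}
uniformly in $\lambda \in B(0,\delta)$ to the right of $\Sigma_\psi^-$.

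For $x > 0$, I would exploit that $\nu^\pm(\lambda)$ and $(1+\lambda)^{-1/2}$ are analytic near $\lambda = 0$, with $\nu^-(0) = -2$, so that after shrinking $\delta$ one has $\Re \nu^-(\lambda) \leq -c < 0$ uniformly. Differentiating
\begin{equation*}
\partial_x G_\lambda^-(x) = -\frac{\nu^-(\lambda)}{2\sqrt{1+\lambda}} \, e^{\nu^-(\lambda) x}, \qquad x > 0,
\end{equation*}
in $\lambda$ produces bounded analytic prefactors multiplying $e^{\nu^-(\lambda) x}$ or $x \, e^{\nu^-(\lambda) x}$. Both of these are uniformly bounded for $x \geq 0$ thanks to the uniform spectral gap $\Re\nu^-(\lambda) \leq -c$. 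A mean value argument along the segment from $0$ to $\lambda$ then gives $|\partial_x G_\lambda^-(x) - \partial_x G_0^-(x)| \leq C|\lambda|$ pointwise on $x > 0$.

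For $x < 0$, the decisive observation is that $\nu^+(0) = 0$, so $\partial_x G_0^-(x) \equiv 0$ on $x < 0$. It therefore suffices to establish
\begin{equation*}
\left| \frac{\nu^+(\lambda)}{2\sqrt{1+\lambda}} \, e^{\nu^+(\lambda) x} \right| \leq C|\lambda|, \qquad x < 0.
\end{equation*}
The prefactor satisfies $|\nu^+(\lambda)| = |-1 + \sqrt{1+\lambda}| = |\lambda|/2 + O(|\lambda|^2)$, supplying the required factor of $|\lambda|$. The exponential factor is controlled by the observation that $\Sigma_\psi^- = \{\lambda : \nu^+(\lambda) \in i\R\}$, so checking a real test point $\lambda > 0$, for which $\nu^+(\lambda) = -1 + \sqrt{1+\lambda} > 0$, shows that ``to the right of $\Sigma_\psi^-$'' corresponds to $\Re \nu^+(\lambda) \geq 0$, hence $|e^{\nu^+(\lambda) x}| \leq 1$ on $x < 0$.

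The main obstacle is precisely this last sign issue: one must confirm that the resolvent-set component in which we work lies on the side of $\Sigma_\psi^-$ where $\Re\nu^+ \geq 0$, which is the analog in this setting of the quadratic tangency of the diffusive dispersion curve at the origin and is what makes the limiting operator $D^{-,0}_\psi$ well defined. Once this geometric fact is in place, what remains is direct pointwise manipulation of the explicit kernel together with the Young inequality reduction above.
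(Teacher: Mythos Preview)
Your proposal is correct and follows essentially the same route as the paper: both define $D^{-,0}_\psi$ as convolution with $\partial_x G_0^-$ and reduce to a pointwise $L^\infty$ estimate on $\partial_x G_\lambda^- - \partial_x G_0^-$, treating the $\nu^-$-branch via the uniform spectral gap $\Re\nu^-(\lambda)\le -c$ and the $\nu^+$-branch via $\nu^+(\lambda)=\mathrm{O}(\lambda)$ together with $\Re\nu^+(\lambda)\ge 0$ to the right of $\Sigma_\psi^-$. Your packaging through Young's inequality is slightly cleaner than the paper's explicit integral splitting, but the content is the same.
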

\begin{proof}
	For any $\lambda$ to the right of the essential spectrum of $\mcl_\psi^-$, we have 
	\begin{align}
		\partial_x (\mcl_\psi^- - \lambda)^{-1} f (x) &= \int_\R \partial_x G_\lambda^- (x-y) f(y) \, dy \\
		&= -\frac{1}{2\sqrt{1 + \lambda}} \left[ \int_{\{y \leq x\}} \nu^- (\lambda) e^{\nu^-(\lambda) (x-y)} f (y) \, dy + \int_{\{y \geq x\}} \nu^+(\lambda) e^{\nu^+(\lambda) (x-y)} f(y) \, dy \right]. \label{e: psi left resolvent derivative formula}
	\end{align}
	The coefficient $-\frac{1}{2 \sqrt{1+\lambda}}$ is analytic in $\lambda$, so we only need to estimate the integrals. For the first term, we have 
	\begin{align*}
		e^{\nu^-(\lambda) \xi} = e^{-2 \xi} + e^{-2\xi} (e^{(\nu^-(\lambda) + 2)\xi} - 1)
	\end{align*}
	If $|\nu^-(\lambda) + 2| |\xi| \leq 1$, then by Taylor expansion we have 
	\begin{align*}
		|e^{(\nu^-(\lambda) + 2)\xi} - 1| \leq C |\nu^-(\lambda) + 2| |\xi| \leq C |\lambda| |\xi| 
	\end{align*} 
	for $\lambda$ small. If on the other hand $|\nu^-(\lambda) + 2| |\xi| > 1$, then we simply use the additional factor of $e^{-2 \xi}$ to absorb the small exponential growth of $e^{(\nu^-(\lambda) + 2) \xi}$ possible for $\lambda$ small, so that 
	\begin{align*}
		|e^{-2 \xi} (e^{(\nu^-(\lambda)+2) \xi} - 1) | \leq C e^{-\xi} \leq  C | \nu^-(\lambda) + 2 | |\xi| |e^{-\xi} \leq C | \lambda| |\xi| e^{-\xi}. 
	\end{align*}
	Hence we obtain 
	\begin{align*}
		\sup_{x \in \R} \left| -\frac{1}{2 \sqrt{1+ \lambda}} \int_{\{y \leq x \}} \nu^-(\lambda) e^{\nu^-(\lambda) (x-y)} f(y) \, dy + \frac{1}{2} \int_{\{y \leq x \}} \nu^-(0) e^{\nu^-(0) (x-y)} f(y) \, dy \right| \leq C |\lambda| \| f \|_{L^1}. 
	\end{align*}
	For the other integral, we note that $\Re \nu^+ (\lambda) \geq 0$ for $\lambda$ to the right of the essential spectrum of $\mcl_\psi^-$, and $\nu^+(\lambda) = \mathrm{O}(\lambda)$ for $\lambda$ small, so 
	\begin{align*}
		\left| \int_{\{ y \geq x\}} \nu^+(\lambda) e^{\nu^+(\lambda) (x-y)} f(y) \, dy \right| \leq C |\nu^+(\lambda)| \int_{ \{ y \geq x \}} | f(y) | \, dy \leq C | \lambda| \| f \|_{L^1}
	\end{align*}
	for $\lambda$ small, completing the proof of \eqref{e: psi left resolvent derivative l infty}, with 
	\begin{align*}
		D^{-, 0}_\psi f (x) = \int_\R \partial_x G_0^-(x-y) f(y) \, dy. 
	\end{align*}
\end{proof}

\begin{lemma}\label{l: psi left resolvent derivative L1 estimate}
	There exist positive constants $C$, $c$, and $\delta$ and a contour $\Gamma$ given by 
	\begin{align}
		\Gamma = \{ ia - c a^2 : a \in [-\delta, \delta]\} \label{e: psi left resolvent derivative Gamma contour}
	\end{align}
	such that we have 
	\begin{align}
		\| \partial_x (\mcl_\psi^- - \lambda)^{-1} f \|_{L^1} \leq C \| f \|_{L^1}
	\end{align}
	for any $f \in L^1(\R)$ for $\lambda$ small and to the right of $\Gamma$. 
\end{lemma}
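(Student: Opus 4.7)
The plan is to apply Young's convolution inequality to the explicit resolvent kernel. Since $\mcl_\psi^-$ has constant coefficients, $\partial_x(\mcl_\psi^- - \lambda)^{-1}$ acts as convolution with $\partial_x G_\lambda^-$, and hence $\| \partial_x (\mcl_\psi^- - \lambda)^{-1} f \|_{L^1} \le \| \partial_x G_\lambda^- \|_{L^1} \|f\|_{L^1}$. The task reduces to bounding $\|\partial_x G_\lambda^-\|_{L^1}$ uniformly for $\lambda$ small and to the right of $\Gamma$.

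First I would differentiate the explicit formula \eqref{e: G lambda minus}. Since $G_\lambda^-$ is continuous at $x = 0$ (both branches take the value $-1/(2\sqrt{1+\lambda})$ there), no delta-function contributions arise, and $\partial_x G_\lambda^-$ is the piecewise exponential function
\begin{equation*}
\partial_x G_\lambda^-(x) = -\frac{1}{2\sqrt{1+\lambda}} \bigl[ \nu^-(\lambda) e^{\nu^-(\lambda) x} \mathbf{1}_{x > 0} + \nu^+(\lambda) e^{\nu^+(\lambda) x} \mathbf{1}_{x < 0} \bigr].
\end{equation*}
I would then split $\|\partial_x G_\lambda^-\|_{L^1} = I_+(\lambda) + I_-(\lambda)$ into contributions from the two half-lines. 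The piece $I_+$ is straightforward: since $\nu^-(\lambda) \to -2$ as $\lambda \to 0$, both $|\nu^-(\lambda)|$ and $|\text{Re}\,\nu^-(\lambda)|$ are bounded above and below by positive constants for $|\lambda|$ small, giving $I_+(\lambda) = |\nu^-(\lambda)|/(2|\sqrt{1+\lambda}||\text{Re}\,\nu^-(\lambda)|) \le C$ uniformly.

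The piece $I_-(\lambda) = |\nu^+(\lambda)|/(2|\sqrt{1+\lambda}|\,\text{Re}\,\nu^+(\lambda))$ is delicate because $\nu^+(\lambda) = \lambda/2 + O(\lambda^2)$ vanishes at the origin. Here the choice of $c$ is crucial: the essential spectrum $\Sigma_\psi^- = \{-k^2 + 2ik : k \in \R\}$ is the leftward parabola corresponding to $c = 1/4$, so taking $c < 1/4$ in $\Gamma = \{ ia - ca^2\}$ places $\Gamma$ strictly to the right of $\Sigma_\psi^-$ except at the common tangent point at the origin. This ensures $\text{Re}\,\nu^+(\lambda) > 0$ throughout the relevant region. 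Combining the Taylor expansion $\nu^+(\lambda) = \lambda/2 - \lambda^2/8 + O(\lambda^3)$ with the parametrization of $\Gamma$ allows the geometric tangency to translate into a quantitative lower bound on $\text{Re}\,\nu^+$ relative to $|\nu^+|$.

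The main obstacle I foresee is precisely this final step --- the uniform control of $|\nu^+|/\text{Re}\,\nu^+$ as $\lambda \to 0$ along $\Gamma$. A direct Taylor expansion gives $\text{Re}\,\nu^+ \sim (1/8 - c/2)a^2$ against $|\nu^+| \sim |a|/2$, so a uniform bound does not follow from a routine application of Young alone. The key is to exploit the quadratic tangency between $\Gamma$ and $\Sigma_\psi^-$ beyond the leading order --- for instance by rewriting the $x < 0$ piece of the kernel as a total derivative $\nu^+ e^{\nu^+ x} = \partial_x e^{\nu^+ x}$ and absorbing the potentially singular ratio into the boundary term at $x = 0$ via an integration by parts inside the convolution. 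Sorting out this cancellation between the leading Taylor terms of the square root and the precise geometry of the contour is, I expect, the technical heart of the estimate.
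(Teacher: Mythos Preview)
Your plan coincides with the paper's: both split $\partial_x G_\lambda^-$ into its $x>0$ and $x<0$ branches, dispatch the former via the uniform exponential decay of $e^{\nu^-(\lambda)x}$, and isolate the $x<0$ branch as the crux. For that branch the paper carries out exactly the total-derivative cancellation you anticipate in your final paragraph, writing via Fubini
\[
\int_\R \int_x^\infty \nu^+(\lambda)\,e^{\nu^+(\lambda)(x-y)} f(y)\,dy\,dx
\;=\; \nu^+(\lambda)\Bigl(\int_\R f\Bigr)\Bigl(\int_{-\infty}^0 e^{\nu^+(\lambda)z}\,dz\Bigr)
\;=\; \int_\R f,
\]
i.e.\ using $\int_{-\infty}^0 \nu^+ e^{\nu^+ z}\,dz = [e^{\nu^+ z}]_{-\infty}^0 = 1$ for $\Re\nu^+>0$.

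Your suspicion about this step is well founded. The $L^1$ norm carries an absolute value on the outer integral; once it is inserted the same Fubini yields $\dfrac{|\nu^+(\lambda)|}{\Re\nu^+(\lambda)}\,\|f\|_{L^1}$ rather than $\|f\|_{L^1}$, and your Taylor computation correctly gives this ratio as $\sim 1/|a|$ along $\Gamma$ --- and equally along the imaginary axis, which also lies to the right of $\Gamma$. Since the $L^1\!\to L^1$ operator norm of a convolution is \emph{exactly} the $L^1$ norm of its kernel (test against an approximate identity), no contour geometry can rescue a uniform bound here: the kernel norm itself blows up near $\lambda=0$. The integration-by-parts mechanism you propose, and which the paper implements without the absolute value, therefore does not close the argument. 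You have correctly located a genuine gap that the paper's displayed calculation passes over.
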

\begin{proof}
	We split the integral representation of the resolvent as in \eqref{e: psi left resolvent derivative formula}. Note that if we choose $c$ sufficiently small, $\Gamma$ is to the right of the essential spectrum of $\mcl_\psi^-$, touching it only at the origin. The estimate on the first integral in \eqref{e: psi left resolvent derivative formula} follows readily from the uniform exponential localization of $e^{\nu^-(\lambda) (x-y)}$ for $y \leq x$ and $\lambda$ to the right of the essential spectrum of $\mcl_\psi^-$, so we focus on the second integral. Using the change of variables $y = x-z$ there and then changing the order of integration, we have
	\begin{align*}
		\int_\R  \int_x^\infty \nu^+(\lambda) e^{\nu^+(\lambda)(x-y)} f(y) \, dy \, dx &= \nu^+(\lambda) \int_\R \int_{-\infty}^0 e^{\nu^+(\lambda) z} f(x-z) \, dz \, dx \\ 
		&= \nu_+(\lambda) \left( \int_\R f(\xi) \, d\xi \right) \left( \int_{-\infty}^0 e^{\nu^+(\lambda) z} \, dz \right) \\
		&= \frac{\nu^+(\lambda)}{\nu^+(\lambda)} \int_\R f(\xi) \, d \xi
	\end{align*}
	provided $\lambda$ is to the right of the essential spectrum of $\mcl_\psi^-$, from which the lemma follows. 
\end{proof}

\begin{lemma}\label{l: psi left resolvent L1 to Linf-1 estimate}
	There exist positive constants $C$ and $\delta$ and a bounded limiting operator $R_\psi^{-,0} : L^1 (\R) \to L^\infty(\R)$ such that for any $f \in L^1 (\R)$ with $\mathrm{supp} (f) \subseteq (-\infty, 0]$, we have 
	\begin{align}
		\left\|\chi_- \left[ (\mcl_\psi^- - \lambda)^{-1} - R_\psi^{-,0} \right] f \right\|_{L^\infty_{-1,0}} \leq C |\lambda| \| f \|_{L^1}
	\end{align}
	for any $\lambda \in B(0,\delta)$ to the right of the essential spectrum of $\mcl_\psi^-$. 
\end{lemma}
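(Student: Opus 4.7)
The plan is to take as the limiting operator $R_\psi^{-,0} f(x) = \int_\R G_0^-(x-y) f(y) \, dy$ and to bound the difference kernel $G_\lambda^- - G_0^-$ pointwise, splitting the integral at $y = x$ so that on each region the kernel is given by a single spatial eigenvalue, as in the proof of Lemma~\ref{l: psi left resolvent derivative Linf estimate}.

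On the region $y \leq x$ we have $G_\lambda^-(x-y) = -\tfrac{1}{2\sqrt{1+\lambda}} e^{\nu^-(\lambda)(x-y)}$ with $\Re \nu^-(\lambda) \leq -c < 0$ uniformly for $\lambda \in B(0,\delta)$. A Taylor expansion identical to the one used for the $\nu^-$ piece in Lemma~\ref{l: psi left resolvent derivative Linf estimate} yields $|(G_\lambda^- - G_0^-)(x-y)| \leq C|\lambda| e^{-c(x-y)}$ for $x \geq y$, so this piece contributes at most $C|\lambda| \|f\|_{L^1}$ in $L^\infty$, without any need for the weight.

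The interesting region is $y > x$, where $G_\lambda^-(x-y) = -\tfrac{1}{2\sqrt{1+\lambda}} e^{\nu^+(\lambda)(x-y)}$ and $\nu^+(\lambda) = \lambda/2 + O(\lambda^2)$ touches zero, so the exponential does not decay at $\lambda = 0$. I would split
\[ G_\lambda^-(x-y) - G_0^-(x-y) = -\tfrac{1}{2\sqrt{1+\lambda}}\bigl(e^{\nu^+(\lambda)(x-y)} - 1\bigr) + \tfrac{1}{2}\bigl(1 - (1+\lambda)^{-1/2}\bigr), \]
where the second, $y$-independent piece is $O(|\lambda|)$ and contributes $C|\lambda|\|f\|_{L^1}$ uniformly in $x$. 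For the first piece, $\Re \nu^+(\lambda) \geq 0$ (since $\lambda$ lies to the right of $\Sigma_\psi^-$) together with $x - y \leq 0$ gives $\Re[\nu^+(\lambda)(x-y)] \leq 0$; combining with $|\nu^+(\lambda)| \leq C|\lambda|$ and splitting into the sub-cases $|\nu^+(\lambda)(x-y)| \leq 1$ versus $>1$ exactly as in Lemma~\ref{l: psi left resolvent derivative Linf estimate} yields the uniform estimate $|e^{\nu^+(\lambda)(x-y)} - 1| \leq C|\lambda||x-y|$. Now invoke the geometric constraint: since $\mathrm{supp}\, f \subseteq (-\infty, 0]$ and $y > x$, we have $x < y \leq 0$, hence $|x-y| \leq |x| \leq \langle x \rangle$. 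Thus the first-piece contribution is bounded by $C|\lambda|\langle x \rangle \|f\|_{L^1}$.

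Multiplying by $\chi_-(x)\rho_{-1,0}(x) \leq \langle x \rangle^{-1}$ then gives the desired $L^\infty_{-1,0}$ bound. The main obstacle is exactly the lack of decay of $e^{\nu^+(\lambda)(x-y)}$ at $\lambda = 0$, which is why the algebraic weight is forced into the statement; the essential mechanism is the trade of the linear growth $|x-y|$ coming from Taylor expansion of $e^{\nu^+(\lambda)(x-y)} - 1$ against the output weight $\langle x \rangle^{-1}$, and this works only because the support of $f$ on $(-\infty,0]$ together with the region $y > x$ confines $|x-y|$ to lie below $|x|$.
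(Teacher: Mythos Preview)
Your proposal is correct and follows essentially the same route as the paper: split the convolution at $y=x$, handle the $\nu^-$ piece via the exponential decay as in Lemma~\ref{l: psi left resolvent derivative Linf estimate}, and on the $\nu^+$ piece use $\Re[\nu^+(\lambda)(x-y)]\le 0$ together with the support constraint $x<y\le 0$ to bound $|e^{\nu^+(\lambda)(x-y)}-1|\le C|\lambda||x-y|\le C|\lambda|\langle x\rangle$, which the weight $\rho_{-1,0}$ absorbs. The only cosmetic differences are that you separate out the analytic prefactor $(1+\lambda)^{-1/2}$ explicitly and invoke a case split $|\nu^+(\lambda)(x-y)|\lessgtr 1$ that is in fact unnecessary here, since $|e^z-1|\le |z|$ holds directly whenever $\Re z\le 0$.
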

\begin{proof}
Using the formula \eqref{e: G lambda minus} for the resolvent kernel, we have 
\begin{align*}
	(\mcl_\psi^- - \lambda)^{-1} f (x) = - \frac{1}{2 \sqrt{1 + \lambda}} \left( \int_{\{y \leq x\}} e^{\nu^-(\lambda) (x-y)} f(y) \, dy + \int_{\{y \geq x\}} e^{\nu^+(\lambda) (x-y)} f(y) \, dy \right).
\end{align*}
Regularity in $\lambda$ of the first integral may be established as in the proof of Lemma \ref{l: psi left resolvent derivative Linf estimate}, so we focus on the second term. Exploiting the fact that $\mathrm{supp} (f) \subseteq (-\infty, 0]$, we have 
\begin{align*}
	\chi_- (x) \int_{\{ y \geq x \}} e^{\nu^+(\lambda) (x-y)} f(y) \, dy = \chi_- (x) \int_x^0 e^{\nu^+(\lambda) (x-y)} f(y) \, dy.
\end{align*}
For $x \leq y \leq 0$ and for $\lambda$ to the right of the essential spectrum of $\mcl_\psi^-$, we have $\Re [\nu^+(\lambda) (x-y)] \leq 0$, and so 
\begin{align*}
	| e^{\nu^+(\lambda)(x-y)} - 1 | \leq C | \nu^+(\lambda)| | x-y| \leq C |\lambda| (|x| +|y|) \leq C |\lambda| |x|. 
\end{align*}
Hence we have 
\begin{align*}
	\left|\chi_- (x) \int_x^0 \left( e^{\nu^+(\lambda) (x-y)} - 1\right) f(y) \, dy\right| \leq C |\lambda| \chi_- (x)\langle x \rangle \int_{x}^0 |f(y)| \,dy \leq C |\lambda| \chi_- (x) \langle x \rangle \| f \|_{L^1},
\end{align*}
from which the lemma readily follows. 
\end{proof}

Next we characterize regularity in a full neighborhood of $\lambda = 0$ given an extra exponentially localized coefficient, which is needed to reconcile with regularity of the right resolvent in our far-field/core argument in the following section.

\begin{lemma}\label{l: psi left resolvent exponential localization}
	Fix $\eta > 0$ and $1 \leq \ell \leq \infty$. There exist positive constants $C$ and $\delta$ such that for any $f \in L^1 (\R)$ with $\mathrm{supp} f \subseteq (-\infty, 0]$, we have for $\lambda \in B(0,\delta)$
	\begin{align}
		\left\| e^{\eta \cdot} \chi_- \left[ (\mcl_{\psi}^- - \lambda)^{-1} - R_{\psi}^{-, 0} \right] f \right\|_{L^\ell} \leq C | \lambda| \| f \|_{L^1}.
	\end{align}
\end{lemma}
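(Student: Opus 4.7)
The argument refines Lemma \ref{l: psi left resolvent L1 to Linf-1 estimate} in two directions: upgrading the norm from algebraically weighted $L^\infty$ to an exponentially weighted $L^\ell$, and extending the admissible range of $\lambda$ from the right of $\Sigma_\psi^-$ to a full ball $B(0,\delta)$. The key observation is that the multiplier $e^{\eta \cdot}\chi_-$ plays a dual role: it absorbs the algebraic growth $\langle x\rangle$ at $-\infty$ that appeared in Lemma \ref{l: psi left resolvent L1 to Linf-1 estimate}, so that the result lies in $L^\ell$ for every $\ell \in [1,\infty]$, and it also dominates the mild exponential growth of the resolvent kernel that arises when $\lambda$ passes to the left of the essential spectrum, where $(\mcl_\psi^- - \lambda)^{-1}$ is no longer defined as a bounded operator on unweighted $L^\ell$.

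First, for $f$ with $\mathrm{supp}\,f \subseteq (-\infty,0]$ and $\lambda \in B(0,\delta)$ with $\delta$ small, I interpret the resolvent via the kernel formula
\begin{align*}
(\mcl_\psi^- - \lambda)^{-1}f(x) = -\frac{1}{2\sqrt{1+\lambda}}\left(\int_{-\infty}^x e^{\nu^-(\lambda)(x-y)}f(y)\,dy + \int_x^0 e^{\nu^+(\lambda)(x-y)}f(y)\,dy\right),
\end{align*}
observing that in the second integral, $x\leq y\leq 0$ gives $|x-y|\leq |x|$ and hence $|e^{\nu^+(\lambda)(x-y)}|\leq e^{C\delta|x|}$, so the integral converges pointwise for all $x\in\R$ and all $\lambda\in B(0,\delta)$. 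This agrees with the usual resolvent to the right of $\Sigma_\psi^-$ and provides the natural extension across the essential spectrum elsewhere.

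Next, I would carry out the same Taylor expansion argument as in the proof of Lemma \ref{l: psi left resolvent L1 to Linf-1 estimate}, using $\tfrac{1}{2\sqrt{1+\lambda}}=\tfrac{1}{2}+\mathrm{O}(\lambda)$, $\nu^-(\lambda)=-2+\mathrm{O}(\lambda)$, $\nu^+(\lambda)=\mathrm{O}(\lambda)$, together with $|e^{\nu^+(\lambda)(x-y)}-1|\leq C|\lambda||x-y|e^{C\delta|x-y|}\leq C|\lambda||x|e^{C\delta|x|}$ for $x\leq y\leq 0$. The first integral contributes a uniformly bounded $\mathrm{O}(|\lambda|)$ term from the rapid decay $e^{-2(x-y)}$, and the second contributes the factor $\langle x\rangle e^{C\delta|x|}$, yielding the pointwise bound
\begin{align*}
\chi_-(x)\left|\left[(\mcl_\psi^- - \lambda)^{-1} - R_\psi^{-,0}\right]f(x)\right|\leq C|\lambda|\chi_-(x)\langle x\rangle e^{C\delta|x|}\|f\|_{L^1}
\end{align*}
for $\lambda \in B(0,\delta)$. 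Multiplying by $e^{\eta x}$ and choosing $\delta$ small enough that $C\delta\leq \eta/2$ gives a product bounded by $C|\lambda|\chi_-(x)\langle x\rangle e^{-\eta|x|/2}\|f\|_{L^1}$, whose $L^\ell(\R)$ norm is finite for every $\ell\in[1,\infty]$, delivering the claim.

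\textbf{Main obstacle.} The subtle step is the first one: justifying that the kernel representation defines the appropriate extension of $(\mcl_\psi^- - \lambda)^{-1}$ for $\lambda$ to the left of $\Sigma_\psi^-$, where the resolvent is not defined as a bounded operator on unweighted $L^\ell(\R)$. The support condition $\mathrm{supp}\,f \subseteq (-\infty,0]$ and the weight $e^{\eta\cdot}\chi_-$ together are precisely what is needed to absorb the growth of $e^{\nu^+(\lambda)(x-y)}$ at $x\to-\infty$ and promote the formal kernel representation into a convergent identity in the weighted space.
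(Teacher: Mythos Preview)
Your proposal is correct and follows essentially the same approach as the paper: both split the kernel representation into the $\nu^-$ and $\nu^+$ integrals, exploit $|x-y|\leq|x|$ for $x\leq y\leq 0$, and use the exponential weight $e^{\eta x}$ to absorb both the linear factor $\langle x\rangle$ and the mild exponential growth allowed when $\lambda$ crosses $\Sigma_\psi^-$. The only cosmetic differences are that the paper obtains the pointwise bound $|e^{\eta x}(e^{\nu^+(\lambda)(x-y)}-1)|\leq C|\lambda|$ via a case split on whether $|\nu^+(\lambda)(x-y)|\leq 1$, whereas you use the global inequality $|e^z-1|\leq|z|e^{|z|}$; and the paper handles $\ell<\infty$ by peeling off a factor $\chi_-e^{\eta x/2}\in L^\ell$ and reducing to the $L^\infty$ case with $\eta/2$, whereas you bound $\chi_-\langle x\rangle e^{-\eta|x|/2}$ in $L^\ell$ directly.
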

\begin{proof}
	We focus first on the case $\ell = \infty$. As in the proof of Lemma \ref{l: psi left resolvent L1 to Linf-1 estimate}, we split the integral representation of the resolvent into two terms and focus on the integral from $x$ to $0$. We claim that for $x \leq y \leq 0$, we have 
	\begin{align}
		| e^{\eta x} (e^{\nu^+(\lambda) (x-y)} - 1 ) | \leq C |\lambda| \label{e: psi left resolvent kernel exponential localized estimate}
	\end{align}
	for $\lambda$ sufficiently small. To prove this, we first consider the case where $| \nu^+(\lambda) (x-y)| \leq 1$. In this case, we have by Taylor's theorem 
	\begin{align*}
		| e^{\eta x} (e^{\nu^+(\lambda) (x-y)} - 1 ) | \leq C e^{\eta x} | x-y| | \nu^+(\lambda) | \leq C | x |e^{\eta x} |\lambda| \leq C |\lambda|,
	\end{align*}
	since $|y| \leq |x|$ for $x \leq y \leq 0$. On the other hand, if $|\nu^+(\lambda) (x-y)| > 1$ and $x \leq y \leq 0$, we have 
	\begin{align*}
		| e^{\eta x} (e^{\nu^+(\lambda) (x-y)} - 1 ) | \leq e^{\frac{\eta}{2} x} e^{\frac{\eta}{2} x} \left( | e^{\nu^+(\lambda)(x-y)} | + 1 \right) \leq e^{\frac{\eta}{2} x} e^{\frac{\eta}{2} y} \left( e^{\Re \nu^+(\lambda) x} e^{-\Re \nu^+(\lambda) y} + 1 \right). 
	\end{align*}
	If $\lambda$ is sufficiently small relative to $\eta$, then the small exponential growth of $e^{\Re \nu^+(\lambda) x}$ and $e^{-\Re \nu^+(\lambda) y}$ can be absorbed into the factors of $e^{\frac{\eta}{4} x }$ and $e^{\frac{\eta}{2} y}$. Hence we obtain 
	\begin{align*}
			| e^{\eta x} (e^{\nu^+(\lambda) (x-y)} - 1 ) | \leq C e^{\frac{\eta}{4} x} \leq C | \nu^+(\lambda) (x-y)| e^{\frac{\eta}{4} x} \leq C |\nu^+(\lambda)| | x | e^{\frac{\eta}{4} x} \leq C |\lambda|,
	\end{align*}
	which completes the proof of \eqref{e: psi left resolvent kernel exponential localized estimate}. By this estimate, we have 
	\begin{align*}
		e^{\eta x} \chi_- (x) \left| \int_x^0 (e^{\nu^+(\lambda) (x-y)}-1) f(y) \, dy \right|\leq C | \lambda| \int_x^0 | f(y) | \, dy \leq C |\lambda| \| f \|_{L^1},
	\end{align*}
	which completes the proof of the lemma in the case $\ell = \infty$. To handle $1 \leq \ell < \infty$, we simply write 
	\begin{align*}
		\left\| e^{\eta \cdot} \chi_- \left[ (\mcl_{\psi}^- - \lambda)^{-1} - R_{\psi}^{-, 0} \right] f \right\|_{L^\ell} &\leq C \| \chi_- e^{\frac{\eta}{2} x} \|_{L^\ell} \| \left\| e^{\frac{\eta}{2} \cdot} \chi_- \left[ (\mcl_{\psi}^- - \lambda)^{-1} - R_{\psi}^{-, 0} \right] f \right\|_{L^\infty} \\
		&\leq C \left\| e^{\frac{\eta}{2} \cdot} \chi_- \left[ (\mcl_{\psi}^- - \lambda)^{-1} - R_{\psi}^{-, 0} \right] f \right\|_{L^\infty},
	\end{align*}
	and repeat the above argument with $\tilde{\eta} = \eta/2$. 
\end{proof}

\subsection{Resolvent estimates for $\mcl_\psi$}
As in Section \ref{s: p full resolvent estimates}, we solve the equation $(\mcl_\psi - \gamma^2) \psi = f$ via a far-field/core decomposition. We again decompose $f$ using the partition of unity introduced in Section \ref{s: p full resolvent estimates} as 
\begin{align*}
	f = \chi_- f + \chi_c f + \chi_+ f =: f_- + f_c + f_+. 
\end{align*}
We let $f_+^\mathrm{odd} (x) = f_+ (x) - f_+ (-x)$ be the odd extension of $f_+$, and let $\psi^+$ solve 
\begin{align}
	(\mcl_\psi^+ - \gamma^2) \psi^+ = f_+^\mathrm{odd}
\end{align}
in order to take advantage of the improved properties of $(\mcl_\psi^+ - \gamma^2)^{-1} = (\partial_{xx} - \gamma^2)^{-1}$ on odd functions. We let $\psi^-$ solve 
\begin{align*}
	(\mcl_\psi^- - \gamma^2) \psi^- = f_-,
\end{align*}
and then decompose the solution $\psi$ to $(\mcl - \gamma^2) \psi = f$ as 
\begin{align*}
	\psi = \chi_- \psi^- + \psi^c + \chi_+ \psi^+. 
\end{align*}
As a result, $\psi^c$ solves 
\begin{align}
	(\mcl_\psi - \gamma^2) \psi^c = \tilde{f} (\gamma), \label{e: psi c eqn}
\end{align}
where 
\begin{multline}
	\tilde{f} (\gamma) = f_c + (\chi_- - \chi_-^2) f + (\chi_+ - \chi_+^2) f - [\mcl_\psi^-, \chi_-] \psi^- + (\mcl_\psi^- - \mcl_\psi) (\chi_- \psi^-) - [\mcl_\psi^+, \chi_+] \psi^+ \\ + (\mcl_\psi^+ - \mcl_\psi) (\chi_+ \psi^+). \label{e: psi resolvent tilde f def}
\end{multline}

As in the previous section, we start by establishing regularity of $\tilde{f}$ in $\gamma$ in appropriate exponentially weighted spaces. 

\begin{lemma}\label{l: psi resolvent tilde f estimates}
	There exist positive constants $C$ and $\delta$ such that for any $\gamma \in B(0,\delta)$ with $\Re \gamma \geq 0$, we have 
	\begin{align}
		\| \tilde{f} (\gamma) - \tilde{f}(0) \|_{L^\ell_{\mathrm{exp}, -\eta, \eta}} \leq C |\gamma| \| f \|_{L^1_{0,1}} \label{e: psi resolvent tilde f lipschitz estimate}
	\end{align}
	for any $f \in L^1_{0,1} (\R)$, for $\ell = 1$ or $\infty$ as well as 
	\begin{align}
		\| \tilde{f}(\gamma) \|_{L^\infty_{\mathrm{exp}, -\eta, \eta}} \leq C \left( \| f \|_{L^1} + \| f \|_{L^\infty} \right) \label{e: psi resolvent tilde f boundedness}
	\end{align}
	for any $f \in L^1 \cap L^\infty (\R)$, and 
	\begin{align}
		\| \tilde{f} (\gamma) \|_{L^1_{\mathrm{exp},-\eta,\eta}} \leq C \| f \|_{L^1} \label{e: psi resolvent tilde f L1 estimate}
	\end{align}
	for any $f \in L^1 (\R)$. 
\end{lemma}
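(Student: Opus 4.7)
The plan is to bound each summand of $\tilde{f}(\gamma)$ in \eqref{e: psi resolvent tilde f def} individually. The terms $f_c + (\chi_- - \chi_-^2) f + (\chi_+ - \chi_+^2) f$ are $\gamma$-independent products of $f$ with compactly supported cutoffs; they contribute multiples of $\|f\|_{L^\ell}$ in $L^\ell_{\mathrm{exp}, -\eta, \eta}$ for $\ell = 1$ or $\infty$, and drop out of the Lipschitz difference $\tilde{f}(\gamma) - \tilde{f}(0)$ entirely. The remaining summands split into two families: the commutators $[\mcl_\psi^\pm, \chi_\pm]\psi^\pm(\gamma)$, whose coefficients are compactly supported where $\chi_\pm'$ lives, and the difference terms $(\mcl_\psi^\pm - \mcl_\psi)(\chi_\pm \psi^\pm(\gamma))$, whose coefficients decay exponentially as $x \to \pm\infty$ at some rate $\alpha > 0$ set by the convergence of $q_*$ and of $\omega'/\omega$ to their asymptotic limits. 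Fixing $\eta \in (0, \alpha/2)$ ensures the weight $\omega_{-\eta, \eta}(x) = e^{\eta|x|}$ does not overwhelm this intrinsic decay on the supports of $\chi_\pm$.

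To establish \eqref{e: psi resolvent tilde f boundedness} and \eqref{e: psi resolvent tilde f L1 estimate}, I would extract uniform pointwise bounds on $\psi^\pm$ and their derivatives. Lemma \ref{l: right resolvent exponentially localized boundedness} gives $e^{-\tilde\eta x}|\psi^+(x)| \leq C \|f\|_{L^1}$ for any small $\tilde\eta > 0$, uniformly in $\gamma \in B(0, \delta)$ with $\Re \gamma \geq 0$, and an analogous pointwise bound on $\psi^+_x$ follows from the explicit kernel $\partial_x G_\gamma^{\mathrm{odd}}$. Lemma \ref{l: psi left resolvent L1 to Linf boundedness} together with Lemma \ref{l: psi left resolvent derivative Linf estimate} yields $\|\psi^-\|_{W^{1,\infty}} \leq C \|f\|_{L^1}$. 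With $\tilde\eta$ chosen smaller than $\alpha - \eta$, combining these pointwise bounds with the exponential decay of the coefficients of $\mcl_\psi^\pm - \mcl_\psi$ on the supports of $\chi_\pm$ proves both claims; the compactly supported commutator contributions are handled by the same pointwise control.

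The Lipschitz estimate \eqref{e: psi resolvent tilde f lipschitz estimate} requires regularity of $\psi^\pm$ in $\gamma$. For the $\psi^-$ contributions, Lemma \ref{l: psi left resolvent exponential localization} applies directly: $f_-$ is supported in $(-\infty, 0]$ and its contribution carries a factor of $\chi_-$, so the lemma provides $\|e^{\eta \cdot} \chi_-[(\mcl_\psi^- - \gamma^2)^{-1} - R_\psi^{-,0}]f_-\|_{L^\ell} \leq C|\gamma|^2 \|f\|_{L^1} \leq C|\gamma| \|f\|_{L^1_{0,1}}$ for $\gamma \in B(0,\delta)$, with a parallel derivative estimate obtained by the same Taylor expansion of $\partial_x G_\lambda^-$ used in Lemma \ref{l: psi left resolvent derivative Linf estimate}. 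For the $\psi^+$ contributions, direct Taylor expansion of $G_\gamma^{\mathrm{odd}}$ and its spatial derivative at $\gamma = 0$ yields the pointwise bound $|G_\gamma^{\mathrm{odd}}(x,y) - G_0^{\mathrm{odd}}(x,y)| \leq C|\gamma| \langle x \rangle \langle y \rangle$ for $x, y \geq 0$; the factor $\langle y \rangle$ is absorbed by $\|f\|_{L^1_{0,1}}$, and the factor $\langle x \rangle$ by the exponentially decaying coefficients of $\mcl_\psi^+ - \mcl_\psi$ on the support of $\chi_+$.

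The main obstacle is the bookkeeping required to coordinate the three small parameters $\eta$, $\tilde\eta$, and $\alpha$ so that, after multiplication by the weight $\omega_{-\eta, \eta}$, all exponential factors and algebraic defect factors combine to leave an exponentially localized remainder of order $|\gamma|$. No genuinely new technical ideas beyond those already developed for $\mcl_p$ in Section \ref{s: p full resolvent estimates} are required; the wrinkle is only that the spectrum of $\mcl_\psi^-$ also touches the origin, so the argument must simultaneously absorb growth originating from both ends of the real line rather than only from the right as in the $\mcl_p$ case.
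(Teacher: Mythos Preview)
Your proposal is correct and follows essentially the same approach as the paper's proof: both split $\tilde f$ into its $\gamma$-independent piece and the $\psi^\pm$-dependent pieces, use Lemma~\ref{l: right resolvent exponentially localized boundedness} for the $\psi^+$ boundedness and Lemma~\ref{l: psi left resolvent exponential localization} for the $\psi^-$ Lipschitz estimate, and absorb all polynomial growth in $x$ with the exponentially localized coefficients of $\mcl_\psi^\pm - \mcl_\psi$ and the commutators. The only cosmetic difference is that the paper outsources the $\psi^+$ Lipschitz estimate to \cite[Proposition~3.1]{AveryScheelSelection}, whereas you carry out the Taylor expansion of $G_\gamma^{\mathrm{odd}}$ directly; your pointwise bound $|G_\gamma^{\mathrm{odd}} - G_0^{\mathrm{odd}}| \leq C|\gamma|\langle x\rangle\langle y\rangle$ is exactly what that external result provides.
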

\begin{proof}
	First we prove \eqref{e: psi resolvent tilde f lipschitz estimate}. The dependence on $\gamma$ enters through $\psi^+$ and $\psi^-$. The coefficients of $\psi^+$ and its derivatives in \eqref{e: psi resolvent tilde f def} are all supported on $x > 0$ and exponentially localized with a rate uniform in $\gamma$ for $\gamma$ small. It follows from \cite[Proposition 3.1]{AveryScheelSelection} that these terms are Lipschitz in $\gamma$ in the desired region. 
	
	For the terms involving $\psi^-$, expansions to order $\lambda = \gamma^2$ are guaranteed by Lemma \ref{l: psi left resolvent exponential localization} in a full neighborhood of $\lambda = 0$ thanks to the strongly localized coefficients, so in particular for $\Re \gamma \geq 0$, as desired. 
	
	The estimate \eqref{e: psi resolvent tilde f boundedness} follows similarly, but with localization of terms involving $\psi^+$ obtained from Lemma \ref{l: right resolvent exponentially localized boundedness}. The term $\| f \|_{L^\infty}$ on the right hand side is needed to control $\| f_c + (\chi_- - \chi_-^2) f + (\chi_+ - \chi_+^2) f \|_{L^\infty_{\exp, -\eta, \eta}}$ in terms of spatial regularity. The estimate \eqref{e: psi resolvent tilde f L1 estimate} is similar. 
\end{proof}

We solve \eqref{e: psi c eqn} by taking advantage of this exponential localization of $\tilde{f}$ and the Fredholm properties of $\mcl_\psi$ on exponentially weighted spaces. To carry this out, we make the ansatz 
\begin{align}
	\psi^c(x) = \beta_- \chi_- (x) e^{\nu^+(\gamma^2) x} + v(x) + \beta_+ \chi_+ (x) e^{-\gamma x},
\end{align}
requiring $v$ to be exponentially localized. Inserting this ansatz into \eqref{e: psi c eqn} leads to an equation 
\begin{align}
	F(v, \beta_-, \beta_+; \gamma) = \tilde{f}(\gamma),
\end{align}
where 
\begin{align}
	F(v, \beta_-, \beta_+; \gamma) = (\mcl_\psi - \gamma^2) v + \beta_- (\mcl_\psi - \gamma^2) (\chi_- e^{\nu^+(\gamma^2) \cdot}) + \beta_+ (\mcl_\psi - \gamma^2) (\chi_+ e^{-\gamma \cdot}). 
\end{align}

For $\eta > 0$, we let $(X_\eta, Y_\eta)$ denote either pair of spaces 
\begin{align}
	X_\eta = L^1_{\mathrm{exp}, -\eta, \eta} (\R), \, Y_\eta = W^{2,1}_{\mathrm{exp}, -\eta, \eta} (\R) \quad \text{or} \quad X_{\eta} = L^\infty_{\mathrm{exp}, -\eta, \eta} (\R), \, Y_\eta = W^{2, \infty}_{\mathrm{exp}, -\eta, \eta} (\R). \label{e: X eta Y eta def}
\end{align}

\begin{lemma}\label{l: psi resolvent fredholm properties}
	For $\eta > 0$ sufficiently small, the operator $\mcl_\psi: Y_\eta \to X_\eta$ is Fredholm with index $-2$ for either pair of spaces $(X_\eta, Y_\eta)$ defined in \eqref{e: X eta Y eta def}. 
\end{lemma}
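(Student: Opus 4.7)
The plan is to reduce Fredholm properties on the weighted spaces $(X_\eta, Y_\eta)$ to those on standard Sobolev spaces by conjugation with the weight, and then to invoke Palmer's theorem on Fredholm properties of asymptotically autonomous second-order elliptic operators on the line \cite{Palmer1, Palmer2, KapitulaPromislow, FiedlerScheel}, which delivers the Fredholm index as the difference of Morse indices of the associated spatial ODEs at $\pm \infty$. Multiplication by $\omega_{-\eta, \eta}$ is an isomorphism $X_\eta \to L^\ell(\R)$ and $Y_\eta \to W^{2,\ell}(\R)$, so $\mcl_\psi : Y_\eta \to X_\eta$ shares Fredholm properties with the conjugated operator $\tilde{\mcl}_\psi := \omega_{-\eta,\eta}\, \mcl_\psi\, \omega_{-\eta,\eta}^{-1} : W^{2,\ell}(\R) \to L^\ell(\R)$.

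Since the coefficients of $\mcl_\psi$ converge exponentially to those of $\mcl_\psi^\pm$ as $x \to \pm \infty$ and $\omega_{-\eta,\eta}(x) = e^{\pm \eta x}$ for $\pm x \geq 1$, a direct computation identifies the asymptotic operators of $\tilde{\mcl}_\psi$ at $\pm \infty$ as
\begin{align*}
\tilde{\mcl}_\psi^+ = (\partial_x - \eta)^2 = \partial_{xx} - 2\eta\partial_x + \eta^2, \qquad \tilde{\mcl}_\psi^- = (\partial_x + \eta)^2 + 2(\partial_x + \eta) = \partial_{xx} + 2(\eta+1)\partial_x + \eta(\eta+2).
\end{align*}
I would then check hyperbolicity of these operators at $\lambda = 0$ and count unstable spatial eigenvalues. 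The characteristic polynomials factor as $(\nu - \eta)^2 = 0$ on the right and $(\nu + \eta)(\nu + \eta + 2) = 0$ on the left, giving a double root $\nu = \eta > 0$ at $+\infty$ and simple negative roots $\nu \in \{-\eta, -\eta - 2\}$ at $-\infty$. For $\eta > 0$ sufficiently small, no spatial eigenvalue lies on the imaginary axis, so both asymptotic systems are hyperbolic, with unstable Morse indices $i_+ = 2$ and $i_- = 0$. Palmer's theorem then yields Fredholm with index $i_- - i_+ = -2$, matching the claim.

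The one step that warrants real care is obtaining the same index in the $\ell = \infty$ setting; this follows either because Palmer's dichotomy-based argument is naturally formulated on bounded continuous functions and transfers to $L^\infty$ via standard elliptic regularity, or by running a Lyapunov-Schmidt reduction directly in $L^\infty_{\mathrm{exp}, -\eta, \eta}$ using the exponential dichotomies supplied by the hyperbolic limits computed above. Beyond this bookkeeping, there is no serious analytical obstacle: hyperbolicity of the asymptotic systems is immediate from the root calculation, and the exponential convergence of the coefficients of $\mcl_\psi$ to $\mcl_\psi^\pm$ is built into the construction. The index $-2$ is also consistent with the heuristics of the far-field/core ansatz used in the next step, whose two parameters $\beta_\pm$ are designed precisely to compensate for the two-dimensional cokernel produced here.
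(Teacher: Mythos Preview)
Your proposal is correct and takes essentially the same approach as the paper: both identify the Fredholm index via the difference of Morse indices of the asymptotic spatial systems after accounting for the exponential weight, obtaining the double root $\nu=\eta$ at $+\infty$ and the pair $\nu\in\{-\eta,-\eta-2\}$ at $-\infty$, hence index $0-2=-2$. The only cosmetic difference is that the paper records the weight by shifting $\nu\mapsto\nu\mp\eta$ in the dispersion relations $d^\pm_\psi$ rather than writing out the conjugated operator explicitly, which is the same computation; your additional remark on handling the $\ell=\infty$ case via dichotomy/regularity is a reasonable bit of care that the paper simply absorbs into its references.
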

\begin{proof}
	The Fredholm index may be computed from the asymptotic dispersion relations,
	\begin{align}
		d^+_\psi (\lambda, \nu) &= \nu^2 - \lambda, \\
		d^-_\psi (\lambda, \nu) &= \nu^2 + 2 \nu - \lambda,
	\end{align}
	obtained from substituting $\psi = e^{\lambda t + \nu x}$ into $\psi_t = \mcl_\psi^\pm \psi$, as follows; see for instance \cite{KapitulaPromislow, FiedlerScheel} for background. To take into account the effect of the exponential weight, we compute the roots to $\nu \mapsto d^+_\psi (\lambda, \nu-\eta)$ for $\eta > 0$ small, and find a double root at $\nu = \eta > 0$. The roots of $d^-_\psi(0, \nu + \eta)$ are $\nu = - \eta$ and $\nu = - \eta - 2$, each negative for $\eta > 0$. The fact that none of these roots are zero implies that $\mcl_\psi$ is Fredholm on $X_\eta$, and its Fredholm index is given by the difference of the Morse indices,
	\begin{align*}
		\mathrm{ind} (\mcl_\psi) = \left(\# \text{ of positive roots of } d^-_\psi \right) - \left(\# \text{ of positive roots of } d^+_\psi \right) = 0 - 2 = -2,
	\end{align*}
	as desired. 
\end{proof}

\begin{lemma}
	For $\eta > 0$ sufficiently small and for either pair of spaces $(X_\eta, Y_\eta)$ defined in \eqref{e: X eta Y eta def} there exists a $\delta > 0$ such that $F : Y_\eta \times \C^2 \times B(0,\delta) \to X_\eta$ is well defined and the mapping 
	\begin{align}
		\gamma \mapsto F(\cdot, \cdot, \cdot; \gamma) : B(0, \delta) \to \mathcal{B}(Y_\eta \times \C^2, X_\eta) 
	\end{align}
	is analytic in $\gamma$. 
\end{lemma}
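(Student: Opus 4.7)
The plan is to verify, term by term, that $F(v,\beta_-,\beta_+;\gamma)$ really lands in $X_\eta$, and that the dependence on $\gamma$ is holomorphic. Writing
\begin{align*}
F(v,\beta_-,\beta_+;\gamma) = (\mcl_\psi-\gamma^2)v + \beta_-(\mcl_\psi-\gamma^2)(\chi_- e^{\nu^+(\gamma^2)\cdot}) + \beta_+(\mcl_\psi-\gamma^2)(\chi_+ e^{-\gamma\cdot}),
\end{align*}
the first summand is handled immediately: the coefficients of $\mcl_\psi$ are bounded and smooth, so $\mcl_\psi:Y_\eta\to X_\eta$ is bounded for either choice of $(X_\eta,Y_\eta)$, and $\gamma\mapsto(\mcl_\psi-\gamma^2)v$ is a polynomial in $\gamma$ with values in $\mathcal{B}(Y_\eta,X_\eta)$, hence trivially analytic.

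For the right ansatz function, I would note that $\chi_+ e^{-\gamma\cdot}$ itself need not lie in $Y_\eta$ (since on $x\ge 2$ it decays only at rate $\Re\gamma$, not $\eta$), but the combination $(\mcl_\psi-\gamma^2)(\chi_+ e^{-\gamma\cdot})$ is exponentially localized. The key decomposition is
\begin{align*}
(\mcl_\psi-\gamma^2)(\chi_+ e^{-\gamma\cdot}) = [\mcl_\psi^+,\chi_+]e^{-\gamma\cdot} + (\mcl_\psi-\mcl_\psi^+)(\chi_+ e^{-\gamma\cdot}) + \chi_+(\mcl_\psi^+-\gamma^2)e^{-\gamma\cdot}.
\end{align*}
The last term vanishes identically because $\mcl_\psi^+=\partial_{xx}$ and $\partial_{xx}e^{-\gamma x}=\gamma^2 e^{-\gamma x}$. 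The commutator $[\mcl_\psi^+,\chi_+]=\chi_+''+2\chi_+'\partial_x$ is compactly supported, and the coefficient discrepancies $\mcl_\psi-\mcl_\psi^+$ attain their limit at $+\infty$ exponentially quickly, so their product with the bounded function $\chi_+ e^{-\gamma\cdot}$ lies in $L^1\cap L^\infty$ with an exponential tail on the right of rate $\eta$ (for $\eta$ smaller than the convergence rate of $\mcl_\psi$). Both surviving terms are manifestly analytic in $\gamma$ as maps into $X_\eta$: the commutator term involves only the smooth function $e^{-\gamma x}$ restricted to the compact support of $\chi_\pm'$, and the coefficient-difference term involves $\chi_+ e^{-\gamma x}$ multiplied by exponentially decaying bounded coefficients, so uniform power-series expansion in $\gamma$ works.

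The left ansatz is treated identically, using $\nu^+(\gamma^2)=-1+\sqrt{1+\gamma^2}$, which is analytic on $B(0,\delta)$ for $\delta$ small. One writes
\begin{align*}
(\mcl_\psi-\gamma^2)(\chi_- e^{\nu^+(\gamma^2)\cdot}) = [\mcl_\psi^-,\chi_-]e^{\nu^+(\gamma^2)\cdot} + (\mcl_\psi-\mcl_\psi^-)(\chi_- e^{\nu^+(\gamma^2)\cdot}),
\end{align*}
the $\chi_-(\mcl_\psi^--\gamma^2)e^{\nu^+(\gamma^2)\cdot}$ contribution again vanishing because $\nu^+(\gamma^2)$ is by construction a root of $d^-_\psi(\gamma^2,\cdot)$. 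Uniformly for $\gamma\in B(0,\delta)$, $|\nu^+(\gamma^2)|\le C|\gamma|^2$, so the function $\chi_- e^{\nu^+(\gamma^2)\cdot}$ is bounded on $x\le 0$ and the exponential decay of $\mcl_\psi-\mcl_\psi^-$ as $x\to-\infty$ (coming from $q_*(x)\to 0$ exponentially) then yields a bound in $X_\eta$ for $\eta$ smaller than that decay rate. Analyticity in $\gamma$ follows from analyticity of $\gamma\mapsto e^{\nu^+(\gamma^2)x}$ and uniform convergence of its Taylor series on each bounded $x$-interval, combined with the exponentially localized prefactors on the left.

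The main obstacle, really a bookkeeping point, is to ensure that $\eta$ can be chosen small enough to accommodate all three sources of constraint simultaneously: the exponential convergence rate of $\mcl_\psi\to\mcl_\psi^\pm$, the requirement $\Re\gamma<\eta$ so that the "$e^{-\gamma x}$ times exponentially decaying tail" stays in $X_\eta$, and the Fredholm setup from Lemma \ref{l: psi resolvent fredholm properties}. All three are satisfied simultaneously for any sufficiently small $\eta$, after which $\delta$ is chosen small relative to $\eta$. Once this is done, analyticity of each summand in the operator norm topology of $\mathcal{B}(Y_\eta\times\C^2,X_\eta)$ is inherited from pointwise analyticity via a standard uniform-convergence argument on $B(0,\delta/2)$, completing the proof.
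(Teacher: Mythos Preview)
Your argument is correct and follows essentially the same route as the paper: rewrite $(\mcl_\psi-\gamma^2)(\chi_\pm e^{\mu_\pm(\gamma)\cdot})$ as a commutator term plus a coefficient-difference term, use that $e^{-\gamma x}$ and $e^{\nu^+(\gamma^2)x}$ annihilate $(\mcl_\psi^\pm-\gamma^2)$, and conclude exponential localization and analyticity from the compactly supported commutators, the exponentially decaying coefficient discrepancies, and analyticity of $\nu^+(\gamma^2)$. One small slip: in your parenthetical for the left side you write ``$q_*(x)\to 0$ exponentially'' as $x\to-\infty$, but in fact $q_*(x)\to 1$ there (cf.\ the asymptotics \eqref{e: front asymptotics} and the form of $\mcl_\psi^-$); the relevant point, that the coefficients of $\mcl_\psi$ converge exponentially to those of $\mcl_\psi^-$, is unaffected.
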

\begin{proof}
	We rewrite $F$ as 
	\begin{multline*}
		F(v, \beta_-, \beta_+; \gamma) = (\mcl_\psi - \gamma^2) v + \beta_- \left( (\mcl_\psi - \mcl_\psi^-) (\chi_- e^{\nu^+(\gamma^2) \cdot} ) + [\mcl_\psi^-, \chi_-] e^{\nu^+(\gamma^2)\cdot}\right) \\ + \beta_+ \left( (\mcl_\psi - \mcl_\psi^+) (\chi_+ e^{-\gamma \cdot}) + [\mcl_\psi^+, \chi_+] e^{-\gamma \cdot} \right),
	\end{multline*}
	taking advantage of the fact that $(\mcl_\psi^- - \gamma^2) (e^{\nu^+(\gamma^2) \cdot}) = (\mcl_\psi^+ - \gamma^2) e^{-\gamma \cdot} = 0$. The terms involving $(\mcl_\psi - \mcl_\psi^\pm)$ or commutators with the cutoff functions are all exponentially localized with uniform rate, so $F$ preserves exponential localization and hence is well defined on the above spaces, as desired. Analyticity follows from the fact that $\nu^+(\gamma^2)$ is analytic; for more details, see for instance \cite[proof of Lemma 3.9]{AveryScheelSelection}.
\end{proof}

\begin{corollary}\label{c: psi resolvent farfield core invertibility}
	For $\eta > 0$ sufficiently small and for either pair of spaces $(X_\eta, Y_\eta)$ in \eqref{e: X eta Y eta def}, there exists a $\delta > 0$ such that for $\gamma \in B(0,\delta)$, the map 
	\begin{align}
		(v, \beta_-, \beta_+) \mapsto F(v, \beta_-, \beta_+; \gamma) : Y_\eta \times \C^2 \to X_\eta 
	\end{align}
	is invertible. We denote the solution to $F(v, \beta_-, \beta_+; \gamma) = \tilde{f}$ by 
	\begin{align}
		(v, \beta_-, \beta_+) = (T(\gamma) \tilde{f}, B_- (\gamma) \tilde{f}, B_+(\gamma) \tilde{f})
	\end{align}
	with analytic maps 
	\begin{align}
		\gamma \mapsto (T(\gamma), B_- (\gamma), B_+(\gamma)) : B(0,\delta) \to \mathcal{B}(X_\eta, Y_\eta) \times \mathcal{B} (X_\eta, \C) \times \mathcal{B}(X_\eta, \C). 
	\end{align}
\end{corollary}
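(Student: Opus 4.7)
The plan is to verify invertibility of $F(\cdot, \cdot, \cdot; 0)$ by combining the Fredholm index computation from Lemma \ref{l: psi resolvent fredholm properties} with injectivity derived from Lemma \ref{l: Lpsi spectral stability}, and then extend invertibility to a neighborhood of $\gamma = 0$ using the analyticity established in the preceding lemma.

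At $\gamma = 0$, we have $\nu^+(0) = 0$, so both $e^{\nu^+(\gamma^2)\cdot}$ and $e^{-\gamma \cdot}$ collapse to the constant function $1$. Since both limiting operators $\mcl_\psi^\pm$ annihilate the constant $1$, the rewritten form of $F$ used in the proof of the preceding lemma reduces to
\begin{align*}
	F(v, \beta_-, \beta_+; 0) = \mcl_\psi v + \beta_- \mcl_\psi \chi_- + \beta_+ \mcl_\psi \chi_+ = \mcl_\psi(v + \beta_- \chi_- + \beta_+ \chi_+).
\end{align*}
The map $(\beta_-, \beta_+) \mapsto \beta_- \mcl_\psi \chi_- + \beta_+ \mcl_\psi \chi_+$ has rank at most $2$ into $X_\eta$, where membership in $X_\eta$ uses that the coefficients of $\mcl_\psi$ approach those of $\mcl_\psi^\pm$ exponentially and that $\mcl_\psi^\pm \chi_\pm$ is compactly supported. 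Since $\mcl_\psi : Y_\eta \to X_\eta$ has Fredholm index $-2$ by Lemma \ref{l: psi resolvent fredholm properties}, the two additional complex dimensions from $\beta_\pm$ raise the Fredholm index of $F(\cdot, \cdot, \cdot; 0) : Y_\eta \times \C^2 \to X_\eta$ to zero.

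For injectivity, suppose $F(v, \beta_-, \beta_+; 0) = 0$, and set $\Psi = v + \beta_- \chi_- + \beta_+ \chi_+$, so that $\mcl_\psi \Psi = 0$. Since $v \in Y_\eta$ decays exponentially at both ends of $\R$, the function $\Psi$ is bounded on $\R$ and satisfies $\Psi(x) \to \beta_\pm$ as $x \to \pm \infty$. Lemma \ref{l: Lpsi spectral stability} rules out any nonzero bounded element of the kernel of $\mcl_\psi$, so $\Psi \equiv 0$. Then $v = -\beta_- \chi_- - \beta_+ \chi_+$; but the right-hand side tends to $\beta_\mp$ at $\mp \infty$, and exponential localization of $v$ then forces $\beta_- = \beta_+ = 0$ and hence $v = 0$. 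Combined with the index-zero property, $F(\cdot, \cdot, \cdot; 0)$ is a Banach space isomorphism.

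Finally, the preceding lemma provides analyticity of $\gamma \mapsto F(\cdot, \cdot, \cdot; \gamma)$ into $\mathcal{B}(Y_\eta \times \C^2, X_\eta)$, and invertibility is an open condition on which operator inversion is analytic (by a Neumann series argument about $F(\cdot,\cdot,\cdot;0)^{-1}$). Thus there exists $\delta > 0$ so that $F(\cdot, \cdot, \cdot; \gamma)$ is invertible for all $\gamma \in B(0,\delta)$ with analytic inverse; projecting the inverse onto its three components yields the analytic maps $T(\gamma)$, $B_-(\gamma)$, $B_+(\gamma)$. The principal obstacle is the injectivity step: one must verify that the two asymptotic constants $\beta_\pm$ introduced by the far-field ansatz exactly account for the two-dimensional Fredholm deficiency of $\mcl_\psi$ on $Y_\eta$, so that the only way for the reconstructed bounded solution $\Psi$ to vanish is to force both $\beta_\pm$ and $v$ to vanish simultaneously. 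This is precisely what the spectral stability lemma guarantees, and is the structural reason that the far-field/core decomposition closes cleanly in the presence of marginally stable essential spectrum on both sides.
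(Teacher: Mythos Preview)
Your proof is correct and follows essentially the same approach as the paper's: both combine the Fredholm index $-2$ of $\mcl_\psi$ from Lemma \ref{l: psi resolvent fredholm properties} with the two extra complex parameters to obtain index zero, invoke Lemma \ref{l: Lpsi spectral stability} to rule out a nontrivial kernel at $\gamma=0$, and then extend by analyticity. You spell out explicitly the form of $F$ at $\gamma=0$ and the argument that $\Psi = v + \beta_-\chi_- + \beta_+\chi_+$ must vanish, whereas the paper compresses this into a reference to the Fredholm bordering lemma and the observation that a nontrivial kernel would produce a bounded solution of $\mcl_\psi\psi=0$; the content is the same.
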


\begin{proof}
	Note that $F$ is linear in $(v, \beta_-, \beta_+)$. Since $D_v F(v, \beta_-, \beta_+; 0)$ = $\mcl_\psi$ is Fredholm index -2 on $Y_\eta$ by Lemma \ref{l: psi resolvent fredholm properties}, it follows from the Fredholm bordering lemma that $(v, \beta_-, \beta_+) \mapsto F(v, \beta_-, \beta_+; \gamma)$ is Fredholm index 0 for $\gamma$ sufficiently small. Lemma \ref{l: Lpsi spectral stability} implies that this map has trivial kernel at $\gamma = 0$, since a nontrivial kernel would give rise to a bounded solution to $\mcl_\psi \psi = 0$. The result then follows from the implicit function theorem. 
\end{proof}

Having solved $(\mcl_\psi - \gamma^2) \psi = f$ in a neighborhood of $\gamma = 0$ in the preceding corollary, we now analyze the regularity of the solution in $\gamma$ near the essential spectrum. The following proposition will be used in Section \ref{s: linear estimates} to establish diffusive decay of $\| e^{\mcl_\psi t }\|_{L^1 \to L^\infty}$. 

\begin{prop}\label{p: psi resolvent L1 to L inf estimate}
	There exist positive constants $C$ and $\delta$ such that for any $f \in L^1(\R) \cap L^\infty(\R)$, the solution $\psi$ to $(\mcl_\psi - \gamma^2)\psi = f$ admits a decomposition $\psi = \psi^\mathrm{left} + \psi^{\mathrm{right}}$, with
	\begin{align}
		\| \psi^\mathrm{left}(\gamma) \|_{L^\infty} \leq C \left( \| f \|_{L^1} + \| f \|_{L^\infty} \right)
	\end{align}
	for all $\gamma \in B(0, \delta)$ such that $\gamma^2$ is to the right of the spectrum of $\mcl_\psi^-$, and 
	\begin{align}
		\| \psi^\mathrm{right} (\gamma) \|_{L^\infty} \leq \frac{C}{|\gamma|} \left( \| f \|_{L^1} + \| f \|_{L^\infty} \right)
	\end{align}
	for all $\gamma \in B(0, \delta)$ with $\Re \gamma \geq \frac{1}{2} | \Im \gamma|$. Furthermore, for each fixed $x \in \R$, the maps $\gamma \mapsto (\psi^\mathrm{left}(x; \gamma), \psi^\mathrm{right}(x; \gamma))$ are analytic for $\Re \gamma > 0$. 
\end{prop}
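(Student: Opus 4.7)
The plan is to leverage the far-field/core decomposition constructed in Corollary \ref{c: psi resolvent farfield core invertibility} and isolate the sole source of the $1/|\gamma|$ singularity. Writing $\psi = \chi_- \psi^- + \psi^c + \chi_+ \psi^+$ as in the preceding subsection, only the piece $\chi_+ \psi^+ = \chi_+ (\partial_{xx} - \gamma^2)^{-1} f_+^{\mathrm{odd}}$ exhibits this singularity, which is inherited from the branch point at the origin in the right dispersion relation; the remaining pieces stay bounded as $\gamma \to 0$, thanks to analyticity of the left resolvent kernel $G_\lambda^-$ near $\lambda = 0$ and of the coefficients $T(\gamma), B_\pm(\gamma), \nu^+(\gamma^2)$ in $\gamma \in B(0,\delta)$. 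This suggests the grouping
\begin{align*}
    \psi^\mathrm{right}(\gamma) := \chi_+ \psi^+(\gamma), \qquad \psi^\mathrm{left}(\gamma) := \chi_- \psi^-(\gamma) + \psi^c(\gamma).
\end{align*}

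For $\psi^\mathrm{right}$, applying Lemma \ref{l: right resolvent L1 to Linf estimate} to $f_+^\mathrm{odd}$ immediately yields the desired bound by $(C/|\gamma|) \| f \|_{L^1}$ on the region $\Re \gamma \geq \frac{1}{2} |\Im \gamma|$. For $\psi^\mathrm{left}$, I would first invoke Lemma \ref{l: psi left resolvent L1 to Linf boundedness} to obtain $\|\chi_- \psi^-\|_{L^\infty} \leq C \| f \|_{L^1}$ whenever $\gamma^2 \in B(0,\delta)$ lies to the right of $\Sigma_\psi^-$. Then I would apply Corollary \ref{c: psi resolvent farfield core invertibility} with the $L^\infty$-based pair $X_\eta = L^\infty_{\mathrm{exp},-\eta,\eta}$, $Y_\eta = W^{2,\infty}_{\mathrm{exp},-\eta,\eta}$, writing
\begin{align*}
    \psi^c = T(\gamma) \tilde{f}(\gamma) + B_-(\gamma) \tilde{f}(\gamma) \, \chi_- e^{\nu^+(\gamma^2)\cdot} + B_+(\gamma) \tilde{f}(\gamma) \, \chi_+ e^{-\gamma \cdot},
\end{align*}
and combining uniform boundedness of $T(\gamma), B_\pm(\gamma)$ from the corollary with estimate \eqref{e: psi resolvent tilde f boundedness} of Lemma \ref{l: psi resolvent tilde f estimates}, which furnishes $\|\tilde{f}(\gamma)\|_{L^\infty_{\mathrm{exp},-\eta,\eta}} \leq C(\|f\|_{L^1} + \|f\|_{L^\infty})$. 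What remains is to check that the two exponential tails are uniformly bounded in $L^\infty$ across the relevant region: $\chi_- e^{\nu^+(\gamma^2)\cdot}$ requires $\Re \nu^+(\gamma^2) \geq 0$, which is precisely the condition that $\gamma^2$ lie to the right of $\Sigma_\psi^-$, since the zero set of $\Re \nu^+$ coincides with the dispersion curve $\Sigma_\psi^-$; and $\chi_+ e^{-\gamma \cdot}$ requires $\Re \gamma \geq 0$, which holds on our branch of $\sqrt{\lambda}$.

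For the analyticity claim on $\{\Re \gamma > 0\}$, each ingredient is analytic on its natural domain: $\psi^+(x;\cdot)$ is analytic wherever $\gamma^2 \notin (-\infty, 0]$, hence on all of $\{\Re \gamma > 0\}$, from the explicit kernel $G_\gamma^\mathrm{odd}$; $\psi^-(x;\cdot)$ is analytic wherever $\gamma^2 \notin \Sigma_\psi^-$; and $\psi^c(x;\cdot)$ inherits analyticity in $B(0,\delta)$ from Corollary \ref{c: psi resolvent farfield core invertibility} together with the analytic dependence of $\tilde{f}(\gamma)$ on its constituents $\psi^\pm$. I expect the main obstacle not to be any single sharp estimate, since each ingredient has been prepared by the preceding lemmas, but rather bookkeeping — specifically, verifying that the individual pieces of $\psi^\mathrm{left}$ and $\psi^\mathrm{right}$ remain in the relevant function spaces across their stated regions of validity, and that on the overlap the two regions reproduce the full resolvent $(\mcl_\psi - \gamma^2)^{-1} f$.
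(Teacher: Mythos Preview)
Your argument is correct and rests on the same far-field/core scaffolding and the same lemmas as the paper. The only difference is where you place the center terms: you take $\psi^{\mathrm{right}} = \chi_+ \psi^+$ and fold all of $\psi^c = T(\gamma)\tilde f(\gamma) + B_-(\gamma)\tilde f(\gamma)\,\chi_- e^{\nu^+(\gamma^2)\cdot} + B_+(\gamma)\tilde f(\gamma)\,\chi_+ e^{-\gamma\cdot}$ into $\psi^{\mathrm{left}}$, whereas the paper keeps only the genuinely ``left'' pieces $\chi_-\psi^- + B_-(\gamma)\tilde f(\gamma)\,\chi_- e^{\nu^+(\gamma^2)\cdot}$ in $\psi^{\mathrm{left}}$ and moves $T(\gamma)\tilde f(\gamma)$ and the $B_+$ tail into $\psi^{\mathrm{right}}$. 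Both groupings satisfy the proposition: your extra terms in $\psi^{\mathrm{left}}$ are uniformly bounded once $\Re\gamma \geq 0$ (implicit in the branch choice $\gamma=\sqrt{\lambda}$), and your leaner $\psi^{\mathrm{right}}$ certainly obeys the $C/|\gamma|$ bound. The paper's split is organized by spectral origin --- $\psi^{\mathrm{left}}$ collects exactly those terms whose $L^\infty$ control requires $\gamma^2$ to stay to the right of $\Sigma_\psi^-$, while $\psi^{\mathrm{right}}$ collects terms that remain pointwise analytic even as $\gamma^2$ crosses that curve --- which is precisely the distinction exploited when deforming the two contours separately in Section~\ref{s: linear estimates}. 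Since Proposition~\ref{p: psi resolvent derivative L1 to L1 estimate} reuses this decomposition verbatim, you would need to verify your alternative split is compatible there as well; it is, but the check differs slightly.
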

\begin{proof}
	With notation as in the far-field/core decomposition carried out above, define 
	\begin{align}
		\psi^\mathrm{left} (\gamma) = \chi_- \psi^- (\gamma) + B_- (\gamma) \tilde{f}(\gamma) \chi_- e^{\nu^+(\gamma^2) \cdot}. \label{e: psi resolvent psi left def}
	\end{align}
	The term $\chi_- \psi^- (\gamma)$ satisfies the desired estimate by Lemma \ref{l: psi left resolvent L1 to Linf boundedness}. For the second term, we have by Corollary \ref{c: psi resolvent farfield core invertibility}
	\begin{align*}
		\| B_- (\gamma) \tilde{f}(\gamma) \chi_- e^{\nu^+(\gamma^2)\cdot} \|_{L^\infty} &\leq C \| B_- (\gamma) \|_{L^\infty_{\mathrm{exp},-\eta,\eta} \to \C} \| \tilde{f} (\gamma) \|_{L^\infty_{\mathrm{exp}, -\eta, \eta} } \| \chi_- e^{\nu^+(\gamma^2) \cdot} \|_{L^\infty} \\
		 &\leq C \| f \|_{L^1 \cap L^\infty} \| \chi_- e^{\nu^+(\gamma^2) \cdot} \|_{L^\infty},
	\end{align*}
	using Corollary \ref{c: psi resolvent farfield core invertibility} to estimate the operator norm of $B_- (\gamma)$ and Lemma \ref{l: psi resolvent tilde f estimates} to control the norm of $\tilde{f}(\gamma)$. For $\gamma^2$ to the right of the essential spectrum of $\mcl_\psi^-$, $\chi_- e^{\nu^+(\gamma^2) \cdot}$ is bounded, so $\psi^\mathrm{left}$ satisfies the desired estimate. 
	
	We collect the remaining terms in the far-field/core decomposition in $\psi^\mathrm{right}$, including the exponentially localized ``center'' term, so 
	\begin{align}
		\psi^\mathrm{right} (\gamma) = \chi_+ \psi^+ (\gamma) + T(\gamma) \tilde{f} (\gamma) + B_+(\gamma) \tilde{f}(\gamma) \chi_+ e^{-\gamma \cdot}. \label{e: psi resolvent psi right def}
	\end{align}
	The term $\chi_+ \psi^+(\gamma)$ satisfies the desired estimate by Lemma \ref{l: right resolvent L1 to Linf estimate}, and the center term $T(\gamma)\tilde{f}(\gamma)$ is controlled in $L^\infty$ by $\| f \|_{L^1 \cap L^\infty}$ thanks to Corollary \ref{c: psi resolvent farfield core invertibility} and Lemma \ref{l: psi resolvent tilde f estimates}. Also using Corollary \ref{c: psi resolvent farfield core invertibility} and Lemma \ref{l: psi resolvent tilde f estimates} to control $B_+(\gamma) \tilde{f}(\gamma)$, we obtain 
	\begin{align*}
		\| B_+(\gamma) \tilde{f}(\gamma) \chi_+ e^{-\gamma \cdot}\|_{L^\infty} \leq C \| f \|_{L^1 \cap L^\infty}
	\end{align*}
	for $\Re \gamma \geq 0$, so in particular $\psi^\mathrm{right}$ satisfies the desired estimate. 
	
	The pointwise analyticity follows from the analyticity in $\gamma$ from Corollary \ref{c: psi resolvent farfield core invertibility} as well as the representation formulas for $\psi^+$ and $\psi^-$ via integration against the resolvent kernels. 
\end{proof}

The next proposition takes advantage of the outward transport on the left to extract faster temporal decay, characterized here by higher regularity in $\gamma$, by measuring the solution in a norm which allows algebraic growth on the left. 

\begin{prop}\label{p: psi resolvent L1 to Linf-10}
	There exist positive constants $C$ and $\delta$ such that for any $f \in L^1_{0,1} (\R) \cap L^\infty(\R)$, the solution to $(\mcl_\psi - \gamma^2) \psi = f$ admits a decomposition $\psi = \tilde{\psi}^\mathrm{left}+ \tilde{\psi}^\mathrm{right}$, with 
	\begin{align}
		\| \tilde{\psi}^\mathrm{left} (\gamma) - \tilde{\psi}^\mathrm{left}(0) \|_{L^\infty_{-1, 0}} \leq C | \gamma|^2 \left( \| f \|_{L^1_{0,1}} + \| f \|_{L^\infty} \right) 
	\end{align}
	for all $\gamma \in B(0,\delta)$ with $\gamma^2$ to the right of the essential spectrum of $\mcl_\psi^-$, and 
	\begin{align}
		\| \tilde{\psi}^\mathrm{right} (\gamma) \|_{L^\infty_{-1,0}} \leq C \left( \| f \|_{L^1_{0,1}} + \| f \|_{L^\infty} \right)
	\end{align}
	for all $\gamma \in B(0,\delta)$ with $\Re \gamma \geq \frac{1}{2} | \Im \gamma|$. 
\end{prop}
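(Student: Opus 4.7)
The plan is to refine the far-field/core decomposition from Corollary \ref{c: psi resolvent farfield core invertibility} by isolating the $\lambda$-analytic behavior on the left, where the limiting operator $\mcl_\psi^-$ has no branch point. Two pieces of structure drive the improvement over Proposition \ref{p: psi resolvent L1 to L inf estimate}: the $O(|\lambda|) = O(|\gamma|^2)$ Lipschitz regularity of the left limiting resolvent in $L^\infty_{-1,0}$ from Lemma \ref{l: psi left resolvent L1 to Linf-1 estimate}, and the fact that the spatial eigenvalue $\nu^+(\gamma^2)$ is analytic in $\lambda$, so that $e^{\nu^+(\gamma^2) \cdot} - 1$ produces a factor of order $|\gamma|^2$ when measured against the weight $\rho_{-1,0}$. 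I would therefore define
\begin{align*}
	\tilde\psi^{\mathrm{left}}(\gamma) &= \chi_- \psi^-(\gamma) + B_-(0)\, \tilde f(0)\, \chi_- e^{\nu^+(\gamma^2) \cdot}, \\
	\tilde\psi^{\mathrm{right}}(\gamma) &= \chi_+ \psi^+(\gamma) + T(\gamma) \tilde f(\gamma) + B_+(\gamma) \tilde f(\gamma)\, \chi_+ e^{-\gamma \cdot} \\
	&\quad + \bigl[ B_-(\gamma) \tilde f(\gamma) - B_-(0) \tilde f(0) \bigr]\, \chi_- e^{\nu^+(\gamma^2) \cdot},
\end{align*}
which sum to $\psi$ by Corollary \ref{c: psi resolvent farfield core invertibility}. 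The key design choice is to retain only the $\gamma$-independent scalar $B_-(0)\tilde f(0)$ in $\tilde\psi^{\mathrm{left}}$ and to absorb the entire $\gamma$-dependent correction to this coefficient into $\tilde\psi^{\mathrm{right}}$, where only a uniform bound is demanded.

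For the Lipschitz estimate on $\tilde\psi^{\mathrm{left}}(\gamma) - \tilde\psi^{\mathrm{left}}(0)$, the piece $\chi_- \psi^-(\gamma) - \chi_- \psi^-(0)$ is controlled directly by Lemma \ref{l: psi left resolvent L1 to Linf-1 estimate} with $\lambda = \gamma^2$, giving $C|\gamma|^2 \|f\|_{L^1}$. For the remaining term $B_-(0)\tilde f(0)\, \chi_-(e^{\nu^+(\gamma^2)\cdot} - 1)$, I would establish the pointwise bound
\begin{align*}
	\left| \chi_-(x) \bigl( e^{\nu^+(\gamma^2) x} - 1 \bigr) \right| \leq C |\gamma|^2 \langle x \rangle
\end{align*}
by case analysis in the spirit of Lemma \ref{l: psi left resolvent exponential localization}: when $|\nu^+(\gamma^2) x| \leq 1$, a Taylor expansion together with $|\nu^+(\gamma^2)| \leq C|\gamma|^2$ applies directly, and when $|\nu^+(\gamma^2) x| > 1$ one has $|x| \geq c/|\gamma|^2$, so $\langle x \rangle^{-1}$ absorbs the $O(1)$ bound $|e^{\nu^+(\gamma^2) x}- 1| \leq 2$ (using $x \leq 0$ and $\gamma^2$ right of $\Sigma_\psi^-$ to guarantee $|e^{\nu^+(\gamma^2) x}| \leq 1$). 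Multiplying by $\rho_{-1,0}(x) = \langle x \rangle^{-1}$ on $\mathrm{supp}(\chi_-)$ yields the $|\gamma|^2$ estimate in $L^\infty_{-1,0}$, while $|B_-(0)\tilde f(0)|$ is bounded by $C(\|f\|_{L^1_{0,1}} + \|f\|_{L^\infty})$ through Corollary \ref{c: psi resolvent farfield core invertibility} and Lemma \ref{l: psi resolvent tilde f estimates}.

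For boundedness of $\tilde\psi^{\mathrm{right}}$ in $L^\infty_{-1,0}$, each piece is handled in turn. On $\mathrm{supp}(\chi_+)$ the weight $\rho_{-1,0}$ equals $1$, so the pointwise kernel estimate \eqref{e: right resolvent G odd pointwise estimate} applied to $f_+^{\mathrm{odd}} \in L^1_{1,1}$ gives $\|\chi_+ \psi^+\|_{L^\infty} \leq C\|f\|_{L^1_{0,1}}$; crucially, the $\langle y \rangle$ factor in the kernel allows this bound without any $1/|\gamma|$ factor. The center piece $T(\gamma)\tilde f(\gamma)$ lies in $W^{2,\infty}_{\mathrm{exp},-\eta,\eta}$ and hence embeds with bounded norm into $L^\infty_{-1,0}$, controlled by $\|\tilde f(\gamma)\|_{L^\infty_{\mathrm{exp},-\eta,\eta}} \leq C(\|f\|_{L^1} + \|f\|_{L^\infty})$ from Lemma \ref{l: psi resolvent tilde f estimates}. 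The right-mode term $B_+(\gamma)\tilde f(\gamma)\chi_+ e^{-\gamma \cdot}$ is bounded since $|e^{-\gamma x}| \leq 1$ on $x \geq 0$ for $\gamma$ in the wedge, and the residual $[B_-(\gamma)\tilde f(\gamma) - B_-(0)\tilde f(0)] \chi_- e^{\nu^+(\gamma^2)\cdot}$ is $O(|\gamma|)$ by analyticity of $B_-$ and the Lipschitz estimate \eqref{e: psi resolvent tilde f lipschitz estimate}, hence certainly uniformly bounded.

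The main obstacle is exactly the mismatch of regularity scales: the product $B_-(\gamma)\tilde f(\gamma)$ is only genuinely $|\gamma|$-Lipschitz because $\tilde f(\gamma)$ inherits the right branch point at the origin through its $\psi^+$ components, so the sharp $|\gamma|^2$ gain cannot be pulled out of the coefficient itself. The decomposition above resolves this by funneling the $|\gamma|^2$ gain through the analytic-in-$\lambda$ basis function $\chi_- e^{\nu^+(\gamma^2)\cdot}$ paired with the frozen coefficient $B_-(0)\tilde f(0)$, while the $|\gamma|$-Lipschitz but uniformly small coefficient correction is quarantined in $\tilde\psi^{\mathrm{right}}$, where only a uniform bound is needed.
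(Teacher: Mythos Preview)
Your decomposition has a genuine gap in the estimate on $\tilde\psi^{\mathrm{right}}$. The residual term
\[
\bigl[B_-(\gamma)\tilde f(\gamma) - B_-(0)\tilde f(0)\bigr]\,\chi_- e^{\nu^+(\gamma^2)\cdot}
\]
is not bounded in $L^\infty_{-1,0}$ on the full wedge $\Re\gamma \geq \tfrac{1}{2}|\Im\gamma|$. For $\gamma$ on the edge of this wedge (say $\gamma = a(1+2i)$ with $a>0$ small), one computes $\Re\gamma^2 = -3a^2$ while $(\Im\gamma^2)^2/4 = 4a^4$, so $\gamma^2$ lies to the \emph{left} of $\Sigma_\psi^-$ and $\Re\nu^+(\gamma^2) < 0$. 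Then $|e^{\nu^+(\gamma^2)x}|$ grows exponentially as $x\to -\infty$, and the weight $\rho_{-1,0}(x) = \langle x\rangle^{-1}$ cannot compensate: $\|\chi_- e^{\nu^+(\gamma^2)\cdot}\|_{L^\infty_{-1,0}} = +\infty$. The $O(|\gamma|)$ smallness of the scalar coefficient does not help, since the spatial function is not in the space at all. This region is precisely the one needed for the contour $\Gamma_\mathrm{right}^t$ in the proof of Proposition~\ref{p: psi L1 01 Linf-10 estimate}, so the gap cannot be sidestepped downstream.

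The fix is simpler than freezing the coefficient: instead split the spatial function as $e^{\nu^+(\gamma^2)\cdot} = (e^{\nu^+(\gamma^2)\cdot}-1) + 1$, putting $B_-(\gamma)\tilde f(\gamma)\,\chi_-(e^{\nu^+(\gamma^2)\cdot}-1)$ into $\tilde\psi^{\mathrm{left}}$ and the constant-mode piece $B_-(\gamma)\tilde f(\gamma)\,\chi_-$ into $\tilde\psi^{\mathrm{right}}$. The point is that the $|\gamma|^2$ gain for the left estimate then comes entirely from the analytic-in-$\lambda$ spatial function $(e^{\nu^+(\gamma^2)\cdot}-1)$, which vanishes at $\gamma=0$ and satisfies your pointwise bound $|\chi_-(x)(e^{\nu^+(\gamma^2)x}-1)| \leq C|\gamma|^2\langle x\rangle$ for $\gamma^2$ right of $\Sigma_\psi^-$; one only needs the coefficient $B_-(\gamma)\tilde f(\gamma)$ to be \emph{bounded}, not Lipschitz, so the regularity mismatch you identified as the main obstacle never actually bites. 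On the right side, $\chi_-$ alone is trivially in $L^\infty_{-1,0}$ for any $\gamma$, which resolves the unboundedness issue.
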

\begin{proof}
	We modify the decomposition from Proposition \ref{p: psi resolvent L1 to L inf estimate} slightly, this time writing
	\begin{align}
		\tilde{\psi}^\mathrm{left}(\gamma) = \chi_- \psi^- (\gamma) + B_- (\gamma) \tilde{f}(\gamma) \chi_- (e^{\nu^+(\gamma^2) \cdot} - 1),
	\end{align}
	and 
	\begin{align}
		\tilde{\psi}^\mathrm{right}(\gamma) = B_- (\gamma) \tilde{f}(\gamma) \chi_- + \chi_+ \psi^+(\gamma) + T(\gamma) \tilde{f}(\gamma) + B_+(\gamma) \tilde{f}(\gamma) \chi_+ e^{-\gamma \cdot}. 
	\end{align}
	By Lemma \ref{l: psi left resolvent L1 to Linf-1 estimate}, we have 
	\begin{align*}
		\| \chi_- \psi^- (\gamma) - \chi_- \psi^- (0) \|_{L^\infty_{-1, 0}} \leq C | \gamma|^2 \| f \|_{L^1_{0,1}},
	\end{align*}
	since $f_-$ is supported on $(-\infty,0]$ by construction. Since $\Re \nu^+(\gamma^2) x \leq 0$ for $\gamma^2$ to the right of the essential spectrum of $\mcl_\psi^-$, we have $| e^{\nu^+(\gamma^2) x} - 1 | \leq C | \nu^+(\gamma^2) | | x | \leq C |\gamma|^2 |x|$ for $\gamma$ small in this region and $x \leq 0$. By Corollary \ref{c: psi resolvent farfield core invertibility} and Lemma \ref{l: psi resolvent tilde f estimates}, $B_- (\gamma) \tilde{f}(\gamma)$ is controlled by $\| f \|_{L^\infty}$, uniformly in $\gamma$ in this region, and hence we conclude 
	\begin{align*}
		\| \tilde{\psi}^\mathrm{left}(\gamma) - \tilde{\psi}^\mathrm{left}(0) \|_{L^\infty_{-1, 0}} \leq C | \gamma|^2 \left( \| f \|_{L^1_{0,1}} + \| f \|_{L^\infty} \right), 
	\end{align*}
	as desired. 
	
	We now estimate $\tilde{\psi}^\mathrm{right}$. By Corollary \ref{c: psi resolvent farfield core invertibility} and Lemma \ref{l: psi resolvent tilde f estimates}, we have 
	\begin{align*}
		\| B_-(\gamma) \tilde{f} (\gamma) \chi_- \|_{L^\infty_{-1, 0}} \leq C \| B_-(\gamma)\|_{L^\infty_{\mathrm{exp},-\eta, \eta} \to \C} \| \tilde{f}(\gamma) \|_{L^\infty_{\mathrm{exp},-\eta,\eta}} \leq C \| f \|_{L^\infty} 
	\end{align*}
	for $\gamma$ small with $\Re \gamma \geq 0$. For the term $\chi_+ \psi^+(\gamma)$, we have 
	\begin{align*}
		\chi_+ (x) \psi^+(x; \gamma) = \chi_+ (x) \int_0^\infty G_\gamma^\mathrm{odd} (x,y) f_+^\mathrm{odd} (y) \, dy. 
	\end{align*}
	By the pointwise estimate \eqref{e: right resolvent G odd pointwise estimate} on $G_\mathrm{\gamma}^\mathrm{odd}$, we have 
	\begin{align*}
		\sup_{x \in \R} | \chi_+ (x) \psi^+(x; \gamma) | \leq \chi_+ (x) \int_0^\infty e^{-c |\gamma| |x-y|} \langle y \rangle |f_+^\mathrm{odd}(y)| \, dy \leq C \| f \|_{L^1_{0,1}},
	\end{align*}
	as desired. For the remaining terms in $\tilde{\psi}^\mathrm{right}$, we have
	\begin{align*}
		\| T(\gamma) \tilde{f}(\gamma) + B_+ (\gamma) \tilde{f}(\gamma) \chi_+ e^{-\gamma \cdot} \|_{L^\infty_{-1, 0}} &\leq C \left( \| f \|_{L^1_{0,1}} + \| f \|_{L^\infty} \right) ( 1 + \| \chi_+ e^{-\gamma \cdot} \|_{L^\infty_{-1,0}} ) \\
		&\leq C  \left( \| f \|_{L^1_{0,1}} + \| f \|_{L^\infty} \right)
	\end{align*}
	for $\gamma$ small with $\Re \gamma \geq \frac{1}{2} | \Im \gamma|$, using Corollary \ref{c: psi resolvent farfield core invertibility} and Lemma \ref{l: psi resolvent tilde f estimates} to control $T(\gamma) \tilde{f}(\gamma)$ and $B_+(\gamma) \tilde{f}(\gamma)$. This completes the proof of the proposition. 
\end{proof}

We now establish improved regularity in $\gamma$ of spatial derivatives of $\psi$, which will eventually translate into improved temporal decay. First, we characterize this regularity when measuring the derivative in $L^1$.

\begin{prop}\label{p: psi resolvent derivative L1 to L1 estimate}
	There exist constants $C$ and $\delta > 0$ such that for all $f \in L^1(\R)$, decomposing the solution to $(\mcl_\psi - \gamma^2) \psi = f$ as in Proposition \ref{p: psi resolvent L1 to L inf estimate}, we have 
	\begin{align}
		\| \partial_x \psi^\mathrm{left} (\gamma) \|_{L^1} \leq C \| f \|_{L^1}
	\end{align}
	for all $\gamma \in B(0, \delta)$ with $\gamma^2$ to the right of the contour $\Gamma$ defined in \eqref{e: psi left resolvent derivative Gamma contour}, and
	\begin{align}
		\| \partial_x \psi^\mathrm{right} (\gamma) \|_{L^1} \leq \frac{C}{|\gamma|} \| f \|_{L^1}
	\end{align}
	for all $\gamma \in B(0, \delta)$ with $\Re \gamma \geq \frac{1}{2} | \Im \gamma|$. 
\end{prop}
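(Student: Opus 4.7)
The plan is to decompose $\partial_x \psi$ according to the far-field/core decomposition $\psi = \psi^{\mathrm{left}} + \psi^{\mathrm{right}}$ of Proposition \ref{p: psi resolvent L1 to L inf estimate} and estimate the two pieces separately in $L^1$. Morally, the ``left'' piece tracks the neutrally stable mode coming from $\mcl_\psi^-$ at $-\infty$ and should yield the $L^1$ bound without any $1/|\gamma|$ loss, while the ``right'' piece carries the branch-point singularity at $\gamma=0$.

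For $\partial_x \psi^{\mathrm{left}}$, I would apply the product rule to $\psi^{\mathrm{left}}(\gamma) = \chi_- \psi^-(\gamma) + B_-(\gamma)\tilde f(\gamma)\,\chi_- e^{\nu^+(\gamma^2)\cdot}$ and estimate each resulting piece. The contribution $\chi_- \partial_x \psi^-$ is controlled directly by Lemma \ref{l: psi left resolvent derivative L1 estimate}, whose region of validity is exactly why the contour $\Gamma$ appears in the hypothesis. The compactly supported pieces $\chi_-'\psi^-$ and $B_-(\gamma)\tilde f(\gamma)\chi_-' e^{\nu^+(\gamma^2)\cdot}$ reduce to $L^\infty$-bounds on a compact interval via Lemma \ref{l: psi left resolvent L1 to Linf boundedness} together with uniform boundedness of $B_-(\gamma)\tilde f(\gamma)$ obtained from Corollary \ref{c: psi resolvent farfield core invertibility} and \eqref{e: psi resolvent tilde f L1 estimate}. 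The remaining ``boundary-mode'' term $B_-(\gamma)\tilde f(\gamma)\,\nu^+(\gamma^2)\chi_- e^{\nu^+(\gamma^2)\cdot}$ requires bounding $|\nu^+(\gamma^2)|\,\|\chi_- e^{\nu^+(\gamma^2)\cdot}\|_{L^1}$ uniformly; here I would exploit the strict positivity of $\Re\nu^+(\gamma^2)$ to the right of $\Gamma$ and the expansion $\nu^+(\lambda) = \lambda/2 + O(\lambda^2)$ together with the parabolic shape of $\Gamma$.

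For $\partial_x \psi^{\mathrm{right}}$, I would differentiate $\psi^{\mathrm{right}} = \chi_+ \psi^+(\gamma) + T(\gamma)\tilde f(\gamma) + B_+(\gamma)\tilde f(\gamma)\chi_+ e^{-\gamma\cdot}$ termwise. The entire $1/|\gamma|$ loss comes from $\partial_x(\chi_+\psi^+)$: a pointwise bound on $\partial_x G_\gamma^{\mathrm{odd}}(x,y)$ in the spirit of Lemma \ref{l: p resolvent right L11 to L1}, using the cancellation between $\frac{1}{2\gamma}$ and $-\gamma$ arising from differentiating the exponential and the sectorial condition $\Re\gamma\ge\tfrac12|\Im\gamma|$, yields $\|\partial_x\psi^+\|_{L^1}\le C|\gamma|^{-1}\|f\|_{L^1}$. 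The term $\partial_x T(\gamma)\tilde f(\gamma)$ inherits a uniform $L^1_{\mathrm{exp},-\eta,\eta}$ bound because $T(\gamma)$ maps into $W^{2,1}_{\mathrm{exp},-\eta,\eta}$ with uniform operator norm by Corollary \ref{c: psi resolvent farfield core invertibility}, combined with \eqref{e: psi resolvent tilde f L1 estimate}. Differentiating $B_+(\gamma)\tilde f(\gamma)\chi_+ e^{-\gamma\cdot}$ produces a compactly supported $\chi_+'$-piece and the term $-\gamma B_+(\gamma)\tilde f(\gamma)\chi_+ e^{-\gamma\cdot}$, whose $L^1$-norm is at most $C|\gamma|\,\|\chi_+ e^{-\gamma\cdot}\|_{L^1}\le C|\gamma|/\Re\gamma$, uniformly bounded under the sectorial hypothesis.

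The hard part will be controlling the boundary-mode contribution in $\partial_x \psi^{\mathrm{left}}$: one needs the prefactor $|\nu^+(\gamma^2)|$ to balance the growth of $\|\chi_- e^{\nu^+(\gamma^2)\cdot}\|_{L^1}\sim 1/\Re\nu^+(\gamma^2)$ as $\gamma^2$ approaches the essential spectrum along $\Gamma$. The parabolic shape of $\Gamma$, with the parameter $c$ chosen small enough to keep $\Gamma$ strictly to the right of $\sigma_{\mathrm{ess}}(\mcl_\psi^-)$, is exactly what makes the ratio $|\nu^+(\gamma^2)|/\Re\nu^+(\gamma^2)$ controllable; this is the same mechanism used in Lemma \ref{l: psi left resolvent derivative L1 estimate}, and I expect the present bound to follow by an expansion of $\nu^+(\gamma^2)$ paralleling the $\nu^+/\nu^+=1$ cancellation exploited there.
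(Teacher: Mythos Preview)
Your proposal is correct and follows essentially the same route as the paper's proof: the same termwise differentiation of the far-field/core pieces, the same appeal to Lemma~\ref{l: psi left resolvent derivative L1 estimate} and its $\nu^+/\nu^+$ cancellation for the boundary-mode term on the left, and the same $|\gamma|^{-1}$ loss coming from $\partial_x\psi^+$ on the right. The only cosmetic difference is that the paper estimates $\|\partial_x\psi^+\|_{L^1}$ by reverting to the full-line Green's kernel $\tfrac{1}{2\gamma}e^{-\gamma|\cdot|}$ and applying Young's inequality directly, rather than working through pointwise bounds on $\partial_x G_\gamma^{\mathrm{odd}}$.
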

\begin{proof}
	First we estimate $\partial_x \psi^\mathrm{left}$. We have 
	\begin{align*}
		\partial_x (\chi_- \psi^-(\gamma)) = \chi_-' \psi^- (\gamma) + \chi_- \partial_x \psi^- (\gamma).
	\end{align*}
	The second term satisfies the desired estimate by Lemma \ref{l: psi left resolvent derivative L1 estimate}. For the first term, we have 
	\begin{align*}
		\| \chi_-' \psi^-(\gamma) \|_{L^1} \leq \| \chi_-' \|_{L^1} \| \psi^-(\gamma) \|_{L^\infty} \leq C \| f \|_{L^1}
	\end{align*}
	by Lemma \ref{l: psi left resolvent L1 to Linf boundedness}. For the other part of $\partial_x \psi^\mathrm{left}$, we have by Corollary \ref{c: psi resolvent farfield core invertibility} and Lemma \ref{l: psi resolvent tilde f estimates}
	\begin{align*}
		\| \partial_x (B_-(\gamma) \tilde{f}(\gamma) \chi_- e^{\nu^+(\gamma^2) \cdot}) \|_{L^1} \leq C \| f \|_{L^1} \left( \| \chi_-' e^{\nu^+(\gamma^2) \cdot} \|_{L^1} + |\nu^+(\gamma^2)| \| \chi_- e^{\nu^+(\gamma^2) \cdot} \|_{L^1} \right).
	\end{align*}
	For the first term in parenthesis, we have 
	\begin{align*}
		\| \chi_-' e^{\nu^+(\gamma^2) \cdot} \|_{L^1} \leq C \| \chi_-' \|_{L^1} \| \chi_- e^{\nu^+(\gamma^2) \cdot} \|_{L^\infty} \leq C 
	\end{align*}
	for $\gamma^2$ to the right of the essential spectrum of $\mcl_\psi^-$. The second term in the parenthesis is bounded by the argument in the proof of Lemma \ref{l: psi left resolvent derivative L1 estimate}. Hence $\partial_x \psi^\mathrm{left}$ satisfies the desired estimate. 
	
	Now we estimate $\partial_x \psi^\mathrm{right}$. First, we write $\psi^+$ as 
	\begin{align}
		\psi^+(x;\gamma) = \frac{1}{2 \gamma} \int_\R e^{-\gamma |x-y|} f_+^\mathrm{odd}(y) \, dy,
	\end{align}
	using the integral kernel for $(\partial_{xx}-\gamma^2)^{-1}$ on the real line without specifically taking advantage of odd data. We then have by Young's inequality
	\begin{align*}
		\| \partial_x \psi^+(\gamma) \|_{L^1} \leq C \| e^{-\gamma |\cdot|} \|_{L^1} \| f_+^\mathrm{odd} \|_{L^1} \leq \frac{C}{|\gamma|} \| f \|_{L^1}
	\end{align*}
	provided $\Re \gamma \geq \frac{1}{2} | \Im \gamma|$. We write 
	\begin{align*}
		\partial_x (\chi_+ \psi^+) = \chi_+' \psi^+ + \chi_+ \partial_x \psi^+.
	\end{align*}
	The second term satisfies the desired estimate by the above argument. For the first term, we have 
	\begin{align*}
		\| \chi_+' \psi^+ \|_{L^1} \leq C \| e^{\eta \cdot} \chi_+' \|_{L^1} \| e^{-\eta \cdot} \chi_+ \psi^+ \|_{L^\infty} \leq C \| f \|_{L^1},
	\end{align*}
	for a fixed $\eta > 0$, using the compact support of $\chi_+'$ to absorb the exponential factor, and then using Lemma \ref{l: right resolvent exponentially localized boundedness} to estimate the remaining term involving $\psi^+$. By Corollary \ref{c: psi resolvent farfield core invertibility} and Lemma \ref{l: psi resolvent tilde f estimates}, we have 
	\begin{align*}
		\| \partial_x T(\gamma) \tilde{f}(\gamma) \|_{L^1} \leq C \| f \|_{L^1}. 
	\end{align*}
	Finally, using similar arguments to the estimates on $\partial_x (\chi_+ \psi^+)$, we obtain 
	\begin{align*}
		\| B_+(\gamma) \tilde{f}(\gamma) \partial_x (\chi_+ e^{-\gamma \cdot} )\|_{L^1}  \leq \frac{C}{|\gamma|} \|f\|
	\end{align*}
	for $\gamma$ small with $\Re \gamma \geq \frac{1}{2} | \Im \gamma|$. Hence $\partial_x \psi^\mathrm{right}$ satisfies the desired estimate. 
\end{proof}

The regularity in $\gamma$ of spatial derivatives is further improved when we measure in $L^\infty$ instead of $L^1$. 

\begin{prop}\label{p: psi resolvent derivative L1 to L inf estimate}
	There exist constants $C > 0$ and $\delta > 0$ such that for all $f \in L^1 (\R) \cap L^\infty(\R)$, the derivative of the solution to $(\mcl_\psi - \gamma^2)\psi = f$ admits a decomposition 
	\begin{align}
		\partial_x \psi = \Psi^\mathrm{left}_d + \Psi^\mathrm{right}_d
	\end{align}
	with
	\begin{align}
		\| \Psi^\mathrm{left}_d (\gamma) - \Psi^\mathrm{left}_d (0) \|_{L^\infty} \leq C |\gamma|^2 \left( \| f \|_{L^1} + \| f \|_{L^\infty} \right) 
	\end{align}
	for all $\gamma \in B(0,\delta)$ with $\gamma^2$ to the right of the essential spectrum of $\mcl_\psi^-$, and 
	\begin{align}
		\| \Psi^\mathrm{right}_d\|_{L^\infty} \leq C \left(  \| f \|_{L^1} + \| f \|_{L^\infty} \right) 
	\end{align}
	for all $\gamma \in B(0,\delta)$ with $\Re \gamma \geq \frac{1}{2} | \Im \gamma |$. 
\end{prop}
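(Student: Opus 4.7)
The plan is to take the far-field/core decomposition from Corollary \ref{c: psi resolvent farfield core invertibility},
\begin{align*}
\psi = \chi_- \psi^-(\gamma) + \chi_+ \psi^+(\gamma) + T(\gamma)\tilde{f}(\gamma) + B_-(\gamma)\tilde{f}(\gamma) \chi_- e^{\nu^+(\gamma^2)\cdot} + B_+(\gamma)\tilde{f}(\gamma) \chi_+ e^{-\gamma \cdot},
\end{align*}
differentiate once, and then regroup the resulting summands into a slow left piece and a uniformly bounded right piece. Concretely, I would set
\begin{align*}
\Psi^\mathrm{left}_d(\gamma) := \chi_- \partial_x \psi^-(\gamma) + B_-(\gamma)\tilde{f}(\gamma)\, \nu^+(\gamma^2)\, \chi_- e^{\nu^+(\gamma^2)\cdot},
\end{align*}
so that $\Psi^\mathrm{left}_d$ comprises exactly those contributions in which no $\partial_x$ hits a cutoff and in which the $\gamma$-dependence not already analytic in $\gamma^2$ enters only through the left resolvent, while $\Psi^\mathrm{right}_d$ collects everything else.

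For $\Psi^\mathrm{left}_d$, Lemma \ref{l: psi left resolvent derivative Linf estimate} directly yields a $|\gamma|^2$-Lipschitz bound with limit $\chi_- D^{-,0}_\psi f_-$ for the first summand, while the second summand vanishes at $\gamma = 0$ since $\nu^+(0) = 0$, and expansion of $\nu^+(\lambda) = -1 + \sqrt{1+\lambda}$ gives $|\nu^+(\gamma^2)| \leq C|\gamma|^2$; combined with the uniform $L^\infty$ bound on $\chi_- e^{\nu^+(\gamma^2)\cdot}$ for $\gamma^2$ to the right of $\sigma_\mathrm{ess}(\mcl_\psi^-)$ together with the norm estimates on $B_-(\gamma)\tilde{f}(\gamma)$ from Corollary \ref{c: psi resolvent farfield core invertibility} and Lemma \ref{l: psi resolvent tilde f estimates}, this produces the required $|\gamma|^2(\|f\|_{L^1} + \|f\|_{L^\infty})$ bound. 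For $\Psi^\mathrm{right}_d$, every remaining summand is bounded uniformly by $C(\|f\|_{L^1} + \|f\|_{L^\infty})$: terms with a $\chi_\pm'$ benefit from compact support combined with pointwise control on $\psi^\pm$ from Lemma \ref{l: psi left resolvent L1 to Linf boundedness} and Lemma \ref{l: right resolvent exponentially localized boundedness}; the central term $\partial_x T(\gamma)\tilde{f}(\gamma)$ is controlled through $\|T(\gamma)\tilde{f}\|_{W^{2,\infty}_{\mathrm{exp},-\eta,\eta}}$; and the factor $-\gamma \chi_+ e^{-\gamma\cdot}$ arising from differentiating the right exponential is manifestly bounded for small $\gamma$ with $\Re \gamma \geq 0$.

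The main obstacle is the term $\chi_+ \partial_x \psi^+(\gamma)$, since Lemma \ref{l: right resolvent L1 to Linf estimate} only gives $\|\psi^+\|_{L^\infty} = \mathrm{O}(1/|\gamma|)$ and a blind differentiation would compound this apparent singularity. I would instead use the explicit kernel and compute
\begin{align*}
\partial_x G_\gamma^\mathrm{odd}(x,y) = \tfrac{1}{2}\bigl( -\sign(x-y) e^{-\gamma|x-y|} + \sign(x+y) e^{-\gamma|x+y|} \bigr),
\end{align*}
observing that the prefactor $1/(2\gamma)$ cancels under differentiation. For $\Re \gamma \geq 0$ both exponentials are bounded by $1$, so $|\partial_x G_\gamma^\mathrm{odd}(x,y)| \leq 1$ and a direct estimate yields $\|\partial_x \psi^+(\gamma)\|_{L^\infty} \leq \|f_+^\mathrm{odd}\|_{L^1} \leq C\|f\|_{L^1}$ uniformly in $\gamma$. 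Together with the decomposition above this closes the proof.
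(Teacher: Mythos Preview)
Your decomposition and overall strategy match the paper's exactly: the same split $\Psi^\mathrm{left}_d = \chi_- \partial_x \psi^- + B_-(\gamma)\tilde{f}(\gamma)\,\nu^+(\gamma^2)\chi_- e^{\nu^+(\gamma^2)\cdot}$, the same use of Lemma~\ref{l: psi left resolvent derivative Linf estimate} and $\nu^+(\gamma^2)=\mathrm{O}(|\gamma|^2)$ for the left piece, and the same idea of controlling $\chi_+\partial_x\psi^+$ via the explicit kernel (the paper defers this to ``similar arguments'' from the preceding proposition; your computation of $\partial_x G_\gamma^\mathrm{odd}$ is a clean way to make it explicit).

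There is one small but genuine gap. For the term $\chi_-'\psi^-$ (and similarly $B_-(\gamma)\tilde{f}(\gamma)\chi_-' e^{\nu^+(\gamma^2)\cdot}$) you invoke Lemma~\ref{l: psi left resolvent L1 to Linf boundedness}, which only bounds $(\mcl_\psi^- - \lambda)^{-1}$ for $\lambda$ to the \emph{right} of $\sigma_{\mathrm{ess}}(\mcl_\psi^-)$. But the region $\Re\gamma \geq \tfrac{1}{2}|\Im\gamma|$ does not keep $\gamma^2$ to the right of that parabola: for instance $\gamma = (1+2i)\eps$ has $\gamma^2 = (-3+4i)\eps^2$, whose real part $-3\eps^2$ lies well to the left of the spectral boundary $-\tfrac{1}{4}(4\eps^2)^2 = -4\eps^4$ for small $\eps$. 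So Lemma~\ref{l: psi left resolvent L1 to Linf boundedness} alone does not cover the required region. The paper closes this by invoking Lemma~\ref{l: psi left resolvent exponential localization} instead: the compact support of $\chi_-'$ supplies an exponential weight $e^{\eta\cdot}$ that absorbs the mild exponential growth of the resolvent kernel when $\gamma^2$ crosses to the left of the essential spectrum, yielding uniform boundedness in a full ball $B(0,\delta)$. You correctly flag compact support as the key mechanism, but you need the stronger lemma to actually exploit it here.
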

\begin{proof}
	Using the far-field/core decomposition to solve for $\psi$, we again write. 
	\begin{align*}
		\partial_x \psi = \partial_x (\chi_- \psi^-) + \partial_x (\chi_+ \psi^+) + B_- (\gamma) \tilde{f}(\gamma) \partial_x (\chi_- e^{\nu^+(\gamma^2) \cdot}) + \partial_x T(\gamma) \tilde{f}(\gamma) + B_+(\gamma) \tilde{f}(\gamma) \partial_x (\chi_+ e^{-\gamma \cdot}). 
	\end{align*}
	We then define 
	\begin{multline}
		\Psi^\mathrm{right}_d = \chi_-' \psi^- + \partial_x(\chi_+ \psi^+) + B_- (\gamma) \tilde{f}(\gamma) \chi_-' e^{\nu^+(\gamma^2)\cdot} + \partial_x T(\gamma) \tilde{f}(\gamma)  \\ + B_+(\gamma) \tilde{f}(\gamma) \partial_x (\chi_+ e^{-\gamma \cdot}). 
	\end{multline}
	We group the terms $\chi_-' \psi^-$ and $B_- (\gamma) \tilde{f}(\gamma) \chi_-' e^{\nu^+(\gamma^2)\cdot}$ in $\Psi^\mathrm{right}_d$ even though they involve terms originating from the neutral mode on the left because they do not have sufficient regularity in $\gamma$ to obtain decay at the optimal rate when integrating along a parabolic contour tangent to the imaginary axis. Indeed, $\tilde{f}(\gamma)$ is only Lipschitz in $\gamma$, whereas to obtain decay at with rate $t^{-1}$ by integrating over such a contour, one would need expansions to order $\lambda = \gamma^2$. However, these terms have a compactly supported factor $\chi_-'$ which can absorb the small exponential growth of $e^{\nu^+(\gamma^2)\cdot}$ when $\gamma^2$ passes to the left of the essential spectrum of $\mcl_\psi^-$ but remains small, and hence we can still control these terms in this region and thereby use the contours otherwise reserved for terms originating from the right. 
	
	More precisely, by Lemma \ref{l: psi left resolvent exponential localization}, we have 
	\begin{align*}
		\| \chi_-' \psi^- (\gamma) \|_{L^\infty} \leq C \| f \|_{L^1}
	\end{align*}
	for any $\gamma \in B(0,\delta)$ with $\Re \gamma \geq 0$ for $\delta$ sufficiently small. By a similar argument, also using Corollary \ref{c: psi resolvent farfield core invertibility} and Lemma \ref{l: psi resolvent tilde f estimates} we obtain 
	\begin{align*}
		 \|B_- (\gamma) \tilde{f}(\gamma) \chi_-' e^{\nu^+(\gamma^2)\cdot}\|_{L^\infty} \leq C \| f \|_{L^1\cap L^\infty}
	\end{align*}
	for all $\gamma \in B(0,\delta)$ with $\Re \gamma \geq 0$. The other terms in $\Psi^\mathrm{right}_d$ are bounded in $L^\infty$ for $\gamma \in B(0,\delta)$ with $\Re \gamma \geq \frac{1}{2} | \Im \gamma |$ by arguments similar to those in the proof of the preceding proposition. 
	
	The remaining terms collected in $\Psi^\mathrm{left}_d$ are 
	\begin{align}
		\Psi^\mathrm{left}_d = \chi_- \partial_x \psi^- + B_-(\gamma) \tilde{f}(\gamma) \chi_- \nu^+(\gamma^2) e^{\nu^+(\gamma^2) \cdot}. 
	\end{align}
	The first term satisfies the desired expansion by Lemma \ref{l: psi left resolvent derivative Linf estimate}. For the second term, we have by Corollary \ref{c: psi resolvent farfield core invertibility} and Lemma \ref{l: psi resolvent tilde f estimates}
	\begin{align*}
		\| B_-(\gamma) \tilde{f}(\gamma) \chi_- \nu^+(\gamma^2) e^{\nu^+(\gamma^2) \cdot} \|_{L^\infty} \leq C |\nu^+(\gamma^2)| \| f \|_{L^1 \cap L^\infty} \| e^{\nu^+(\gamma^2)\cdot} \|_{L^\infty} \leq C |\gamma|^2 \| f \|_{L^1 \cap L^\infty}
	\end{align*}
	for $\gamma$ small with $\gamma^2$ to the right of the essential spectrum of $\mcl_\psi^-$, which completes the proof of the proposition. 
\end{proof}

\section{Linear estimates}\label{s: linear estimates}

We now translate the regularity of the resolvents of $\mcl_p$ and $\mcl_\psi$ near their essential spectra  into time decay estimates on the semigroups they generate. As elliptic operators with smooth bounded coefficients, $\mcl_p$ and $\mcl_\psi$ are both sectorial operators on $L^p(\R)$ for any $1 \leq p \leq \infty$, and hence they generate analytic semigroups through the inverse Laplace transform,

\begin{align}
	e^{\mcl_p t} &= - \frac{1}{2 \pi i} \int_{\Gamma_p} e^{\lambda t} (\mcl_p - \lambda)^{-1} \, d \lambda, \label{e: p semigroup formula} \\
	e^{\mcl_\psi t} &= - \frac{1}{2 \pi i} \int_{\Gamma_\psi} e^{\lambda t} (\mcl_\psi - \lambda)^{-1} \, d \lambda,
\end{align}
for appropriately chosen contours $\Gamma_p, \Gamma_\psi$. We take advantage of the estimates on the resolvents obtained in Sections \ref{s: p resolvent estimates} and \ref{s: psi resolvent estimates} to deform the integration contours near the essential spectrum and thereby extract temporal decay. 

\subsection{Large time estimates on $e^{\mcl_p t}$}

First we state the $t^{-3/2}$ decay estimate on $e^{\mcl_p t}$, which follows as in \cite{AveryScheel, AveryScheelSelection} from the regularity of the resolvent in Proposition \ref{p: p resolvent lipschitz}.  

\begin{figure}
	\centering
	\includegraphics[width = 1\textwidth]{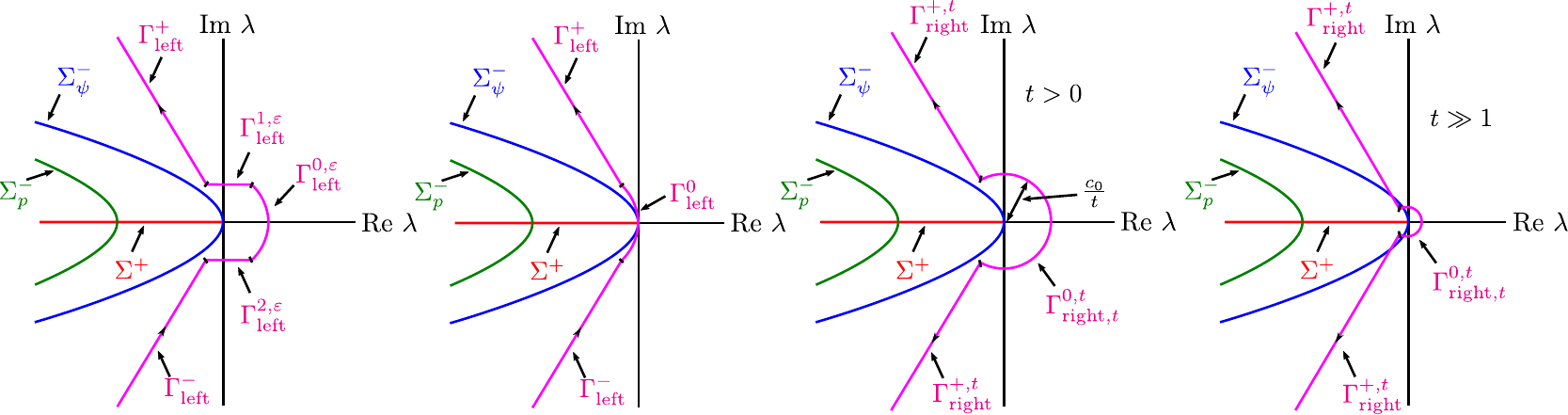}
	\caption{\footnotesize{Integration contours $\Gamma_\mathrm{left}^\eps$ for $\eps > 0$ and $\eps = 0$ (left, center left) and $\Gamma_\mathrm{right}^t$ for $t > 0$ and $t \gg 1$ (center right, right).} }
	\label{f: contours}
\end{figure}

\begin{prop}[\cite{AveryScheelSelection}, Proposition 4.2]\label{p: linear p t-3/2 estimate}
	There exists a constant $C > 0$ such that for any $p_0 \in L^1_{0, 1} (\R)$, we have for all $t > 0$ 
	\begin{align}
		\| e^{\mcl_p t} p_0 \|_{W^{1,\infty}_{0, -1}} \leq \frac{C}{t^{3/2}} \| p_0 \|_{L^1_{0,1}}. 
	\end{align}
\end{prop}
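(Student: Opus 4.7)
The approach is to apply the inverse Laplace representation \eqref{e: p semigroup formula} and deform the contour $\Gamma_p$ to hug the boundary of the essential spectrum of $\mcl_p$, taking full advantage of the regularity near $\lambda=0$ provided by Proposition \ref{p: p resolvent lipschitz}. Because the right dispersion curve $\Sigma^+$ has a branch point at the origin, the plan is to substitute $\gamma = \sqrt{\lambda}$ (with branch cut along $(-\infty,0]$), under which $d\lambda = 2\gamma\,d\gamma$, and deform to a contour in $\gamma$-space passing along (or slightly into the right half plane of) the imaginary axis --- concretely, the contour $\Gamma_{\mathrm{left}}^0$ of Figure \ref{f: contours}. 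This deformation is permissible because $(\mcl_p-\gamma^2)^{-1}$ extends continuously from $\Re\gamma>0$ up to $\Re\gamma=0$ by Proposition \ref{p: p resolvent lipschitz}, and because on the far part of the contour the sectoriality of $\mcl_p$ gives polynomial bounds on the resolvent that are more than compensated by $|e^{\gamma^2 t}|$.

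Next I would split the contour into a near piece $\{|\gamma|\leq\delta\}$ and a far piece $\{|\gamma|>\delta\}$. On the far piece, sectorial bounds combined with $|e^{\gamma^2 t}|\leq e^{-c|\gamma|^2 t}$ yield a contribution of order $e^{-c\delta^2 t}$, dominated by $t^{-3/2}$ at large $t$ (and handled at bounded $t$ by a uniform short-time bound from the analytic semigroup generated by $\mcl_p$). On the near piece, Proposition \ref{p: p resolvent lipschitz} splits the integrand into a constant-in-$\gamma$ piece $R_p^0 p_0$ and a Lipschitz remainder $E(\gamma)p_0$ with $\|E(\gamma)p_0\|_{W^{1,\infty}_{0,-1}}\leq C|\gamma|\|p_0\|_{L^1_{0,1}}$. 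The $R_p^0 p_0$ contribution collapses, via the antiderivative identity $2\gamma e^{\gamma^2 t}=\partial_\gamma(e^{\gamma^2 t})/t$, to a boundary term
\begin{align*}
-\frac{R_p^0 p_0}{2\pi i t}\bigl[e^{\gamma^2 t}\bigr]_{-i\delta}^{i\delta} = O(e^{-c\delta^2 t}),
\end{align*}
again dominated by $t^{-3/2}$. The crucial remainder estimate, parametrizing the contour by $\gamma=ia$ with $a\in[-\delta,\delta]$, reads
\begin{align*}
\left\|\int_{-\delta}^{\delta} e^{-a^2 t} E(ia) p_0 \cdot 2(ia)\, i\, da\right\|_{W^{1,\infty}_{0,-1}} \leq C\|p_0\|_{L^1_{0,1}} \int_{-\delta}^{\delta} a^2 e^{-a^2 t}\, da \leq \frac{C}{t^{3/2}}\|p_0\|_{L^1_{0,1}},
\end{align*}
using the Gaussian moment $\int_{\R} a^2 e^{-a^2 t}\,da \sim t^{-3/2}$.

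The main obstacle is the contour deformation across the branch point at $\lambda=0$ and the operator-valued analytic continuation of the resolvent to the imaginary $\gamma$-axis in the correct norms, both of which are already encoded in the statement of Proposition \ref{p: p resolvent lipschitz}. Once that expansion is in hand, the $t^{-3/2}$ rate is forced by the Gaussian integral above, reflecting the quadratic tangency of $\Sigma^+$ to the imaginary axis together with the Lipschitz (rather than merely continuous) regularity of the resolvent in $\gamma$.
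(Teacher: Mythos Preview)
Your approach is correct and matches the argument the paper cites from \cite{AveryScheel, AveryScheelSelection}: pass to the $\gamma=\sqrt\lambda$ variable so that the Jacobian contributes an extra factor of $\gamma$, subtract the limiting operator $R_p^0$ from Proposition~\ref{p: p resolvent lipschitz} (whose contribution integrates to a boundary term that is exponentially small or vanishes), and estimate the Lipschitz remainder via $\int a^2 e^{-a^2 t}\,da\sim t^{-3/2}$. One small remark: the contour you actually use, $\gamma=ia$ with $a\in[-\delta,\delta]$, corresponds in $\lambda$-space to the negative real axis traversed twice, not to the parabolic arc $\Gamma_{\mathrm{left}}^0=\{ia-ca^2\}$ of Figure~\ref{f: contours}, which is a $\lambda$-contour used for the $\psi$ estimates; the reference to $\Gamma_{\mathrm{left}}^0$ is a mislabel, but your computation with $\gamma=ia$ is the right one.
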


In our nonlinear argument, we will need to estimate $p(t)$ in the stronger norm $L^1$, and so we also state and prove the following linear decay estimate in this space. 

\begin{prop}\label{p: p L11 to L1 estimate}
	There exists a constant $C > 0$ such that for any $p_0 \in L^1_{0, 1} (\R)$, we have for all $t > 0$ 
	\begin{align}
		\| e^{\mcl_p t} p_0 \|_{L^1} \leq \frac{C}{t^{1/2}} \| p_0 \|_{L^1_{0, 1}}. 
	\end{align}
\end{prop}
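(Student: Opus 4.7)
The plan is to start from the inverse Laplace representation \eqref{e: p semigroup formula}, unfold the branch point at the origin via $\lambda = \gamma^2$, and deform the resulting contour to a $t$-dependent contour adapted to the right-dispersion curve $\Sigma^+$. The resolvent bound of Proposition \ref{p: p resolvent L11 to L1 estimate} then makes the $L^1$ integral over this contour explicitly computable.

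After the substitution $\lambda = \gamma^2$, $d\lambda = 2\gamma\, d\gamma$, the semigroup takes the form
\begin{align*}
e^{\mcl_p t} p_0 = -\frac{1}{\pi i} \int_{\tilde\Gamma} e^{\gamma^2 t}\,\gamma\,(\mcl_p-\gamma^2)^{-1} p_0 \, d\gamma.
\end{align*}
I would deform $\tilde\Gamma$, in a neighborhood of the origin, to a piecewise contour $\tilde\Gamma^t = \Gamma^t_0 \cup \Gamma^t_1 \cup \Gamma_2$, where $\Gamma^t_0 = \{\delta_0 t^{-1/2} + is : |s| \le 2\delta_0 t^{-1/2}\}$ is a short vertical segment pushed distance $\delta_0 t^{-1/2}$ into the stable half-plane, $\Gamma^t_1$ consists of two rays along the sector boundary $\Re\gamma = \tfrac{1}{2}|\Im\gamma|$ from the endpoints of $\Gamma^t_0$ out to $|\gamma| = r_0$ for some fixed small $r_0 > 0$, and $\Gamma_2$ is the remaining standard sectorial contour away from the origin. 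On $\Gamma^t_0 \cup \Gamma^t_1$, the hypothesis $\Re\gamma \ge \tfrac{1}{2}|\Im \gamma|$ of Proposition \ref{p: p resolvent L11 to L1 estimate} is satisfied by construction, so that
\begin{align*}
\| \gamma (\mcl_p - \gamma^2)^{-1} p_0 \|_{L^1} \le C \, \|p_0\|_{L^1_{0,1}},
\end{align*}
uniformly in $\gamma$ on these pieces. The deformation is justified because by \cite{AveryScheelSelection} and Proposition \ref{p: p resolvent lipschitz} the resolvent is analytic in the region swept out (the spectrum touches the imaginary $\lambda$-axis only at the origin via $\Sigma^+$, which corresponds to the imaginary $\gamma$-axis), and the resolvent estimates control the boundary contributions at infinity.

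For the vertical segment $\Gamma^t_0$, on which $\Re(\gamma^2) = \delta_0^2/t - s^2 \le \delta_0^2/t$, one has
\begin{align*}
\Big\| \int_{\Gamma^t_0} e^{\gamma^2 t} \gamma (\mcl_p-\gamma^2)^{-1} p_0 \, d\gamma \Big\|_{L^1}
\le C\, e^{\delta_0^2} \,\|p_0\|_{L^1_{0,1}} \int_{-\infty}^{\infty} e^{-s^2 t}\, ds
\le \frac{C}{t^{1/2}}\, \|p_0\|_{L^1_{0,1}}.
\end{align*}
On the sector boundary $\Gamma^t_1$, parametrizing by $\gamma = (\tfrac{1}{2}+i)s$ (and the conjugate ray) gives $\Re(\gamma^2) = -\tfrac{3}{4} s^2$, so the contribution is dominated by the Gaussian integral $\int e^{-\frac{3}{4}s^2 t}\,ds \le C t^{-1/2}$ and produces the same bound. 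On $\Gamma_2$, standard sectoriality of $\mcl_p$ (elliptic with smooth bounded coefficients on $L^1$) yields exponential-in-$t$ decay, trivially absorbed into the $t^{-1/2}$ estimate. Finally, for $t \le 1$, the bound is immediate from the fact that $e^{\mcl_p t}$ is bounded on $L^1_{0,1}$ uniformly for $t$ in any compact interval.

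The step requiring the most care is the choice of the endpoint scale $\delta_0 t^{-1/2}$ so that the vertical segment $\Gamma^t_0$ both (i) stays within the sector $\Re\gamma \ge \tfrac{1}{2}|\Im\gamma|$ required by Proposition \ref{p: p resolvent L11 to L1 estimate}, and (ii) exponential bounds from $\Re(\gamma^2)$ match cleanly along the transitions to $\Gamma^t_1$ and $\Gamma_2$. Beyond that, the proof is a direct Gaussian computation; the absence of extra $|\gamma|$ factors (compared to the $t^{-3/2}$ estimate of Proposition \ref{p: linear p t-3/2 estimate}, which exploits Lipschitz regularity in $\gamma$) is precisely what produces the weaker $t^{-1/2}$ rate.
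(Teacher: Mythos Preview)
Your proposal is correct and follows essentially the same route as the paper: both arguments feed the resolvent bound of Proposition~\ref{p: p resolvent L11 to L1 estimate} into the inverse Laplace formula and deform to a $t$-dependent contour approaching the origin at the scale $|\gamma|\sim t^{-1/2}$ (equivalently $|\lambda|\sim t^{-1}$), from which the $t^{-1/2}$ rate drops out by a Gaussian/arc-length computation. The only cosmetic differences are that the paper works directly in $\lambda$ with a circular arc of radius $\sim t^{-1}$ joined to rays in the left half-plane (the contour $\Gamma_\mathrm{right}^t$ of Figure~\ref{f: contours}), whereas you pass to $\gamma$ and use a short vertical segment plus sector-boundary rays; and the phrase ``into the stable half-plane'' is a slip, since your segment sits at $\Re\gamma = \delta_0 t^{-1/2}>0$.
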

\begin{proof}
	By Proposition \ref{p: p resolvent L11 to L1 estimate}, for any $p_0 \in L^1_{0,1} (\R)$, we have 
	\begin{align*}
		\| (\mcl_p - \gamma^2)^{-1} p_0 \|_{L^1} \leq \frac{C}{ |\gamma|} \| p_0 \|_{L^1_{0,1}} 
	\end{align*}
	for any $\gamma$ sufficiently small with $\Re \gamma \geq \frac{1}{2} | \Im \gamma|$. With this estimate in hand, we obtain time decay from a standard argument, choosing $\Gamma_p$ in \eqref{e: p semigroup formula} to be a circular arc centered at the origin whose radius scales as $t^{-1}$, connected to two rays extending out to infinity in the left half plane; see the right two panels in Figure \ref{f: contours}. The above estimate on the rate of blow up of the resolvent then translates into the desired decay rate; see, for instance, \cite[Proposition 7.4]{AveryScheel}. 
\end{proof}


\subsection{Large time estimates on $e^{\mcl_\psi t}$}

The first linear estimate we prove here states that $\psi$ decays diffusively, with the expected $L^1$-$L^\infty$ estimate. 

\begin{prop}\label{p: psi L1 Linf estimate}
	There exists a constant $C > 0$ such that for any $\psi_0 \in L^1 (\R)$, we have for all $t > 0$
	\begin{align}
		\| e^{\mcl_\psi t} \psi_0 \|_{L^\infty} \leq \frac{C}{t^{1/2}} \| \psi_0 \|_{L^1}. 
	\end{align}
\end{prop}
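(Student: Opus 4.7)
The approach is to combine standard short-time parabolic smoothing with a large-time contour-integral argument that exploits the resolvent decomposition of Proposition \ref{p: psi resolvent L1 to L inf estimate}. The difficulty is that the near-essential-spectrum resolvent bounds in that proposition require $L^1 \cap L^\infty$ input, whereas the statement only assumes $\psi_0 \in L^1$; the short-time step resolves precisely this mismatch.

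For $0 < t \leq 1$, the operator $\mcl_\psi$ is uniformly elliptic with smooth bounded coefficients on $\R$, so it generates analytic semigroups on each $L^p(\R)$, and standard parabolic smoothing (e.g.\ two successive $L^p \to L^q$ analytic-semigroup estimates combined with the 1D Sobolev embedding $W^{1,1}(\R) \hookrightarrow L^\infty(\R)$, or Gaussian heat-kernel bounds) yields $\|e^{\mcl_\psi t}\psi_0\|_{L^\infty} \leq C t^{-1/2} \|\psi_0\|_{L^1}$.

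For $t \geq 2$, I set $f := e^{\mcl_\psi}\psi_0$, so that the short-time estimate together with boundedness of $e^{\mcl_\psi}$ on $L^1(\R)$ gives $\|f\|_{L^1 \cap L^\infty} \leq C \|\psi_0\|_{L^1}$. Writing $e^{\mcl_\psi t}\psi_0 = e^{\mcl_\psi(t-1)}f$ and using the inverse Laplace representation of $e^{\mcl_\psi(t-1)}$, I deform the contour according to the decomposition $(\mcl_\psi - \gamma^2)^{-1}f = \psi^\mathrm{left} + \psi^\mathrm{right}$ of Proposition \ref{p: psi resolvent L1 to L inf estimate}, integrating the two pieces along separately chosen contours. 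For the right piece I use a Hankel-type contour $\Gamma_\mathrm{right}^{t-1}$ of the form in Figure \ref{f: contours}, with a circular arc of radius $(t-1)^{-1}$ in $\lambda$ (equivalently $(t-1)^{-1/2}$ in $\gamma$) closed by rays into the left half-plane, chosen at an angle $<\pi$ in $\lambda$ so that in $\gamma$-space the contour lies within the sector $\Re\gamma \geq \tfrac{1}{2}|\Im\gamma|$ required by the proposition. After the substitution $\lambda = \gamma^2$, $d\lambda = 2\gamma\, d\gamma$, the $|\gamma|^{-1}$ loss in the bound for $\psi^\mathrm{right}$ cancels against the Jacobian, so the arc contributes at most $C(t-1)^{-1/2}\|f\|_{L^1 \cap L^\infty}$, while the rays contribute exponentially decaying terms. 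For the left piece I use a parabolic contour adapted to the quadratic tangency of $\Sigma_\psi^-$ at the origin, locally of the form $\lambda = ia - c a^2$ for small $c>0$ chosen so as to remain to the right of $\Sigma_\psi^-$, again closed by rays in the left half-plane. The uniform $L^\infty$ bound on $\psi^\mathrm{left}$ combined with $|e^{\lambda(t-1)}| \leq e^{-c a^2 (t-1)}$ on this contour yields a Gaussian integral of order $(t-1)^{-1/2}$. Summing and using $(t-1)^{-1/2} \leq C t^{-1/2}$ for $t \geq 2$ completes the argument.

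The main technical obstacle, beyond the input-space issue already addressed, is ensuring convergence of the contour integrals far from the origin, where the refined estimates of Proposition \ref{p: psi resolvent L1 to L inf estimate} no longer apply. This is routine: on the portions of the contours with $|\lambda| > \delta^2$, I patch in the standard sectorial resolvent bound $\|(\mcl_\psi - \lambda)^{-1}\|_{L^\infty \to L^\infty} \lesssim |\lambda|^{-1}$, and the factor $e^{\lambda(t-1)}$ provides exponential decay since $\Re\lambda$ is bounded away from zero there. The same strategy underlies the proof of Proposition \ref{p: p L11 to L1 estimate}, so I expect the arguments to be nearly parallel.
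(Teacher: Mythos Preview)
Your proposal is correct and follows essentially the same route as the paper: the same resolvent decomposition from Proposition \ref{p: psi resolvent L1 to L inf estimate}, the same parabolic contour for $\psi^{\mathrm{left}}$ and scaled circular-arc contour for $\psi^{\mathrm{right}}$, and the same use of short-time parabolic smoothing (Lemma \ref{l: small time estimates}) to pass from $L^1\cap L^\infty$ to $L^1$ data. The only difference is organizational --- you apply the smoothing step first and then run the contour argument on $e^{\mcl_\psi(t-1)}f$, whereas the paper first proves the estimate for $\psi_0\in L^1\cap L^\infty$ and removes the restriction afterward; also, mind the trivial interval $1<t<2$ left between your two cases.
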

\begin{proof}
	First we restrict to $\psi_0 \in L^1 (\R) \cap L^\infty(\R)$. Decomposing $(\mcl_\psi-\gamma^2)^{-1} \psi_0 = \psi^\mathrm{left} (\gamma) + \psi^\mathrm{right}(\gamma)$ as in Proposition \ref{p: psi resolvent L1 to L inf estimate}, we write 
	\begin{align*}
		e^{\mcl_\psi t} \psi_0 = -\frac{1}{2 \pi i} \int_{\Gamma_\psi} e^{\gamma^2 t} \psi^\mathrm{left} (\gamma) \, d (\gamma^2) - \frac{1}{2 \pi i} \int_{\Gamma_\psi} e^{\gamma^2 t} \psi^\mathrm{right}(\gamma) \, d (\gamma^2). 
	\end{align*}
	The terms $\psi^\mathrm{right} (\gamma)$ and $\psi^\mathrm{left}(\gamma)$ remain analytic in $\gamma^2$ for $\gamma^2$ in the resolvent set of $\mcl_\psi$, so we may separately deform the contours in these two integrals by Cauchy's theorem. For the integral involving $\psi^\mathrm{left}(\gamma)$, we first fix $\eps > 0$ small and integrate along the contour
	\begin{align*}
		\Gamma_\mathrm{left}^\eps = \Gamma^+_\mathrm{left} \cup \Gamma^{1,\eps}_\mathrm{left} \cup \Gamma^{0, \eps}_\mathrm{left} \cup^{2, \eps}_\mathrm{left} \cup \Gamma^-_\mathrm{left} 
	\end{align*}
	pictured in the left most panel in Figure \ref{f: contours}. We choose the lengths of the line segments $\Gamma^{1,\eps}_\mathrm{left}, \Gamma^{2, \eps}_\mathrm{left}$ to be equal to $\eps$. The boundedness of the resolvent from Proposition \ref{p: psi resolvent L1 to L inf estimate} then guarantees that 
	\begin{align*}
		\lim_{\eps \to 0^+} \left\| \int_{\Gamma^{j, \eps}_\mathrm{left}} e^{\lambda t} (\mcl_\psi - \lambda)^{-1} \, d \lambda \right\|_{L^1 \to L^\infty} = 0. 
	\end{align*}
	Hence we may send $\eps \to 0^+$ and thereby write the semigroup as the integral over the limiting contour $\Gamma_\mathrm{left} = \Gamma^+_\mathrm{left} \cup \Gamma^0_\mathrm{left} \cup \Gamma^-_\mathrm{left}$ pictured in the center left panel of Figure \ref{f: contours}. We parameterize the parabolic arc tangent to the imaginary axis as 
	\begin{align*}
		\Gamma_\mathrm{left}^0 = \{ i a - c a^2 : a \in [-\delta, \delta]\}
	\end{align*}
	for some $\delta, c > 0$ sufficiently small. The contribution from the rays extending out to infinity is exponentially decaying in time, since these are contained strictly in the left half plane, so we focus on estimating the integral over $\Gamma_{\mathrm{left}}^0$, for which we have 
	\begin{align*}
		\int_{\Gamma_\mathrm{left}^0} e^{\gamma^2 t} \psi^\mathrm{left} (\gamma) \, d (\gamma^2) = \int_{-\delta}^\delta e^{i a t - c a^2 t} \psi^\mathrm{left} (\sqrt{ia - ca^2}) (i - 2 ca ) \, da. 
	\end{align*}
	By the control of $\psi^\mathrm{left}$ in Proposition \ref{p: psi resolvent L1 to L inf estimate}, we then have 
	\begin{align*}
		\left\| \int_{\Gamma_\mathrm{left}^0} e^{\gamma^2 t} \psi^\mathrm{left} (\gamma) \, d (\gamma^2) \right\|_{L^\infty} &\leq C \| \psi_0 \|_{L^1 \cap L^\infty} \int_{-\delta}^\delta e^{-ca^2 t} \, da \\
		&= C \| \psi_0 \|_{L^1 \cap L^\infty} \frac{1}{\sqrt{t}} \int_{-\delta \sqrt{t}}^{\delta \sqrt{t}} e^{-cz^2} \, dz \\
		&\leq \frac{C}{\sqrt{t}} \| \psi_0 \|_{L^1 \cap L^\infty}. 
	\end{align*}

	For the term involving $\psi^\mathrm{right}(\gamma)$, we integrate over the same contour used in the proof of Proposition \ref{p: p L11 to L1 estimate}, pictured in the right two panels of Figure \ref{f: contours}. Note that for $t$ large, the contour $\Gamma_\mathrm{right}^t$ passes through the essential spectrum of $\mcl_\psi$; however, the resolvent is still pointwise analytic in $\gamma$ in this region by Proposition \ref{p: psi resolvent L1 to L inf estimate}, so that the resulting contour integral is well defined and still determines the action of the semigroup $e^{\mcl_\psi t}$ \cite{ZumbrunHoward}. We thereby obtain, using the estimate on $\psi^\mathrm{right}$ from Proposition \ref{p: psi resolvent L1 to L inf estimate},
	\begin{align*}
		\left\| \frac{1}{2 \pi i} \int_{\Gamma_\psi} e^{\gamma^2 t} \psi^\mathrm{right}(\gamma) \, d (\gamma^2) \right\|_{L^\infty} \leq \frac{C}{\sqrt{t}} \| \psi_0\|_{L^1 \cap L^\infty}.
	\end{align*}
	The restriction to $\psi_0 \in L^1 (\R) \cap L^\infty (\R)$ may be removed by combining these estimates with the standard small time parabolic regularity estimate 
	\begin{align*}
		\| e^{\mcl_\psi t} \psi_0 \|_{L^\infty} \leq \frac{C}{\sqrt{t}} \| \psi_0 \|_{L^1},
	\end{align*}
	for $0 < t < 1$; see Lemma \ref{l: small time estimates} below. Hence we obtain 
	\begin{align*}
		\| e^{\mcl_\psi t} \|_{L^\infty} \leq \frac{C}{\sqrt{t}} \| \psi_0 \|_{L^1}
	\end{align*}
	for all $t  > 0$, as desired. 
\end{proof}

In the nonlinear argument, we will need to establish sufficiently fast decay of terms of the form $f \psi(t)^2$, where $f$ is a function which is exponentially localized on the left. The pure diffusive decay rate $t^{-1/2}$ of $\psi$ is not sufficient to close our nonlinear argument for such a term. However, we can take advantage of the outward transport as well as the localization of $f$ to obtain a stronger decay estimate, sufficient to close our nonlinear argument, by measuring in a norm which allows some algebraic growth on the left. 

\begin{prop}\label{p: psi L1 01 Linf-10 estimate}
	There exists a constant $C > 0$ such that for any $\psi_0 \in L^1_{0, 1} (\R)$, we have for all $t > 0$
	\begin{align}
		\| e^{\mcl_\psi t} \psi_0 \|_{L^\infty_{-1, 0}} \leq \frac{C}{t} \| \psi_0 \|_{L^1_{0, 1}}.
	\end{align} 
\end{prop}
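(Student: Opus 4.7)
The plan is to follow the contour deformation strategy of Proposition \ref{p: psi L1 Linf estimate}, but using the sharper $O(|\gamma|^2)$ regularity of the left piece provided by Proposition \ref{p: psi resolvent L1 to Linf-10} to gain one additional factor of decay, upgrading the rate from $t^{-1/2}$ to $t^{-1}$ at the price of measuring in the weaker algebraically-weighted norm $L^\infty_{-1,0}$. I would first restrict to $\psi_0 \in L^1_{0,1} \cap L^\infty$ and decompose the semigroup by writing $(\mcl_\psi - \gamma^2)^{-1}\psi_0 = \tilde{\psi}^\mathrm{left}(\gamma) + \tilde{\psi}^\mathrm{right}(\gamma)$ as in Proposition \ref{p: psi resolvent L1 to Linf-10}, then treating the two pieces along independently deformed contours, as justified by pointwise analyticity.

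For the $\tilde{\psi}^\mathrm{left}$ term, I would deform to the contour $\Gamma_\mathrm{left}$ from the proof of Proposition \ref{p: psi L1 Linf estimate}, with parabolic arc $\Gamma_\mathrm{left}^0 = \{ia - ca^2 : a \in [-\delta,\delta]\}$. The key observation is that although Proposition \ref{p: psi resolvent L1 to Linf-10} only controls the \emph{difference} $\tilde{\psi}^\mathrm{left}(\gamma) - \tilde{\psi}^\mathrm{left}(0)$, the constant $\tilde{\psi}^\mathrm{left}(0)$ is harmless at the semigroup level: since $e^{\gamma^2 t}$ is entire and decays rapidly as $\lambda \to -\infty$, closing $\Gamma_\mathrm{left}$ at infinity in the left half plane and applying Cauchy's theorem yields $\int_{\Gamma_\mathrm{left}} e^{\gamma^2 t} \tilde{\psi}^\mathrm{left}(0) \, d(\gamma^2) = 0$, so we may freely replace the integrand by the difference. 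On $\Gamma_\mathrm{left}^0$ one has $|\gamma^2| \leq C|a|$, so Proposition \ref{p: psi resolvent L1 to Linf-10} contributes an extra factor of $|a|$ in the integrand compared to the proof of Proposition \ref{p: psi L1 Linf estimate}, and the computation
\begin{align*}
    \int_{-\delta}^{\delta} |a|\, e^{-c a^2 t}\, da = O(t^{-1})
\end{align*}
produces the improved temporal rate; the rays $\Gamma_\mathrm{left}^\pm$ sit strictly in the left half plane and contribute exponentially decaying terms.

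For the $\tilde{\psi}^\mathrm{right}$ term, I would integrate over the contour $\Gamma_\mathrm{right}^t$ used in the proof of Proposition \ref{p: p L11 to L1 estimate}, whose circular arc in the $\lambda$-plane has radius $\sim 1/t$. The uniform bound on $\tilde{\psi}^\mathrm{right}$ from Proposition \ref{p: psi resolvent L1 to Linf-10} combined with the $O(1/t)$ arc length then yields the desired $t^{-1}(\|\psi_0\|_{L^1_{0,1}} + \|\psi_0\|_{L^\infty})$ bound, while the two rays contribute exponentially small terms. Finally, the auxiliary restriction $\psi_0 \in L^\infty$ is removed exactly as in Proposition \ref{p: psi L1 Linf estimate}: for $t \leq 1$, small-time parabolic regularity gives $\|e^{\mcl_\psi t}\psi_0\|_{L^\infty_{-1,0}} \leq \|e^{\mcl_\psi t}\psi_0\|_{L^\infty} \leq C t^{-1/2} \|\psi_0\|_{L^1}$, which implies the desired bound since $t^{-1/2} \leq t^{-1}$ on this range; for $t \geq 1$, one writes $e^{\mcl_\psi t}\psi_0 = e^{\mcl_\psi(t-1)}\bigl(e^{\mcl_\psi}\psi_0\bigr)$ and applies the estimate just obtained to $e^{\mcl_\psi}\psi_0 \in L^1_{0,1} \cap L^\infty$, whose norm is controlled by $\|\psi_0\|_{L^1_{0,1}}$. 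The main technical point is the Cauchy-theorem subtraction of $\tilde{\psi}^\mathrm{left}(0)$ that exposes the $|\gamma|^2$ factor on the parabolic arc; everything else is a routine matching of contour geometry to the resolvent regularity already established.
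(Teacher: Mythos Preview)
Your proposal is correct and follows essentially the same approach as the paper's proof: decompose via Proposition \ref{p: psi resolvent L1 to Linf-10}, subtract off $\tilde{\psi}^\mathrm{left}(0)$ using Cauchy's theorem to expose the $\mathrm{O}(|\gamma|^2)$ remainder on the parabolic arc $\Gamma_\mathrm{left}^0$, use the uniform bound on $\tilde{\psi}^\mathrm{right}$ along the $t^{-1}$-scaled circular contour $\Gamma_\mathrm{right}^t$, and remove the auxiliary $L^\infty$ restriction via small-time regularity. The only cosmetic difference is that the paper phrases the vanishing of $\int e^{\gamma^2 t}\tilde{\psi}^\mathrm{left}(0)\,d(\gamma^2)$ as ``the integrand depends on $\gamma$ only through $e^{\gamma^2 t}$ and hence is analytic on $\C$,'' whereas you close the contour at infinity; both are equivalent justifications.
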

\begin{proof}
	We again restrict at first to $\psi_0 \in L^1_{0,1} (\R) \cap L^\infty(\R)$ and use Proposition \ref{p: psi resolvent L1 to Linf-10} to decompose the resolvent, this time writing 
	\begin{multline}
		e^{\mcl_\psi t} \psi_0 = -\frac{1}{2 \pi i } \int_{\Gamma_\psi} e^{\gamma^2 t} \tilde{\psi}^\mathrm{left}(0) \, d (\gamma^2) - \frac{1}{2 \pi i} \int_{\Gamma_\psi} e^{\gamma^2 t} (\tilde{\psi}^\mathrm{left}(\gamma) - \tilde{\psi}^\mathrm{left}(0)) \, d (\gamma^2) \\ -\int_{\Gamma_\psi} \frac{1}{2 \pi i} e^{\gamma^2 t} \tilde{\psi}^\mathrm{right} (\gamma) \, d (\gamma^2). 
	\end{multline}
	The integrand in the first integral only depends on $\gamma$ through $e^{\gamma^2 t}$, and hence is analytic in $\gamma^2$ on $\C$, so that this term vanishes. For the second integral, we integrate over the contour $\Gamma_\mathrm{left}$ used in the proof of Proposition \ref{p: psi L1 Linf estimate} and use Proposition \ref{p: psi resolvent L1 to Linf-10} to estimate the remainder $\tilde{\psi}^\mathrm{left}(\gamma) - \tilde{\psi}^\mathrm{left}(0)$ to obtain
	\begin{align*}
		\left\| \frac{1}{2 \pi i} \int_{\Gamma^0_\mathrm{left}} e^{\gamma^2 t} (\tilde{\psi}^\mathrm{left}(\gamma) - \tilde{\psi}^\mathrm{left}(0)) \, d (\gamma^2) \right\|_{L^\infty_{-1, 0}} &\leq C\| \psi_0 \|_{L^1 \cap L^\infty} \int_{-\delta}^{\delta} e^{-c a^2 t} | i a - ca^2 | |i - 2 ca| \, da \\
		&\leq \frac{C}{t} \| \psi_0 \|_{L^1\cap L^\infty},
	\end{align*}
	gaining an extra factor of $t^{-1/2}$ in decay from the $\mathrm{O}(|\gamma|^2)$ estimate on the remainder. The contributions from the other parts of the contour $\Gamma_\mathrm{left}$ are again exponentially decaying in time.  
	
	The estimate on the term involving $\tilde{\psi}^\mathrm{right}(\gamma)$ follows as in the proof of Proposition \ref{p: psi L1 Linf estimate}, but we gain an extra factor of $t^{-1/2}$ since $\| \tilde{\psi}^\mathrm{right}(\gamma)\|_{L^\infty}$ is bounded rather than blowing up at rate $|\gamma|^{-1}$ by Proposition \ref{p: psi resolvent L1 to Linf-10}. The restriction to $\psi_0 \in L^\infty(\R)$ can again be removed by the small time regularity estimate in Lemma \ref{l: small time estimates}. 
\end{proof}

Finally, we establish the following decay estimates on derivatives, equivalent to those for the heat equation.

\begin{prop}\label{p: linear psi derivative estimates}
	There exists a constant $C > 0$ such that for any $\psi_0 \in L^1 (\R)$, we have for all $t > 0$
	\begin{align}
		\| \partial_x (e^{\mcl_\psi t} \psi_0) \|_{L^1} &\leq \frac{C}{\sqrt{t}} \| \psi_0 \|_{L^1}, \label{e: psi derivative L1 to L1 estimate}
	\end{align}
	and 
	\begin{align}
		\| \partial_x (e^{\mcl_\psi t} \psi_0) \|_{L^\infty} &\leq \frac{C}{t} \| \psi_0 \|_{L^1}. \label{e: psi derivative L1 to Linf estimate} 
	\end{align}
\end{prop}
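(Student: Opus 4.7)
The plan is to combine the two resolvent estimates from Propositions \ref{p: psi resolvent derivative L1 to L1 estimate} and \ref{p: psi resolvent derivative L1 to L inf estimate} with the same contour deformation strategy developed in the proofs of Propositions \ref{p: psi L1 Linf estimate} and \ref{p: psi L1 01 Linf-10 estimate}. Concretely, I would represent $\partial_x e^{\mcl_\psi t} \psi_0$ as a contour integral of $\gamma \mapsto \partial_x (\mcl_\psi - \gamma^2)^{-1} \psi_0$, then split the integrand into ``left'' and ``right'' pieces according to the respective proposition and deform each contour independently, using the parabolic contour $\Gamma_\mathrm{left}$ tangent to the imaginary axis for the left part and the circular-arc-plus-rays contour $\Gamma_\mathrm{right}^t$ (radius scaling as $t^{-1}$) for the right part.

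For the $L^1 \to L^1$ estimate \eqref{e: psi derivative L1 to L1 estimate}, I would first reduce to $\psi_0 \in L^1 \cap L^\infty$ and apply the decomposition of Proposition \ref{p: psi resolvent derivative L1 to L1 estimate}. The right piece $\partial_x \psi^\mathrm{right}(\gamma)$ blows up at rate $|\gamma|^{-1}$, which is exactly the behavior handled by the contour $\Gamma_\mathrm{right}^t$ in the proof of Proposition \ref{p: p L11 to L1 estimate}, yielding a $t^{-1/2}$ bound. The left piece is \emph{bounded} uniformly in $\gamma$ to the right of the parabola $\Gamma$, so integrating $e^{\gamma^2 t}$ against it along $\Gamma_\mathrm{left}^0$ reduces to an integral of the form $\int_{-\delta}^\delta e^{-c a^2 t}|i - 2ca|\,da$, which is $O(t^{-1/2})$ after the substitution $a = z/\sqrt{t}$, while the two rays give exponential decay in $t$. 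Removing the $L^\infty$ restriction uses the small-time parabolic regularity estimate in Lemma \ref{l: small time estimates}.

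For the $L^1 \to L^\infty$ estimate \eqref{e: psi derivative L1 to Linf estimate}, I would use the decomposition $\partial_x \psi = \Psi^\mathrm{left}_d + \Psi^\mathrm{right}_d$ from Proposition \ref{p: psi resolvent derivative L1 to L inf estimate}, splitting further as
\begin{align*}
\partial_x e^{\mcl_\psi t}\psi_0 = -\frac{1}{2\pi i}\int_{\Gamma_\psi} e^{\gamma^2 t}\,\Psi^\mathrm{left}_d(0)\,d(\gamma^2) - \frac{1}{2\pi i}\int_{\Gamma_\psi} e^{\gamma^2 t}\bigl(\Psi^\mathrm{left}_d(\gamma) - \Psi^\mathrm{left}_d(0)\bigr)\,d(\gamma^2) - \frac{1}{2\pi i}\int_{\Gamma_\psi} e^{\gamma^2 t}\Psi^\mathrm{right}_d(\gamma)\,d(\gamma^2).
\end{align*}
The first integral vanishes because its integrand is entire in $\gamma^2$. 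The second integral is estimated along $\Gamma_\mathrm{left}^0$: the $O(|\gamma|^2)$ remainder gives an integrand bounded by $e^{-ca^2 t}|ia - ca^2|$, producing an extra $t^{-1/2}$ and hence total decay $t^{-1}$. The third integral is estimated on $\Gamma_\mathrm{right}^t$ and, since $\Psi^\mathrm{right}_d(\gamma)$ is bounded rather than blowing up, we gain an additional $t^{-1/2}$ compared with the proof of Proposition \ref{p: psi L1 Linf estimate}, again yielding $t^{-1}$. Finally, Lemma \ref{l: small time estimates} removes the $L^1 \cap L^\infty$ restriction.

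I do not anticipate a genuine obstacle: all of the hard work is already contained in the resolvent estimates of Section \ref{s: psi resolvent estimates} and in the contour-deformation machinery of Propositions \ref{p: psi L1 Linf estimate}--\ref{p: psi L1 01 Linf-10 estimate}. The only subtlety is matching the two contour choices to the two pieces of the decomposition: the left part must be integrated along the parabolic contour so that we can cash in the higher $\gamma$-regularity into faster $t$-decay, while the right part must be integrated along the arc $\Gamma_\mathrm{right}^t$ since it only enjoys $|\gamma|^{-1}$ (or better) bounds and is possibly non-analytic past the essential spectrum, which is accommodated by the pointwise analyticity statement in Proposition \ref{p: psi resolvent L1 to L inf estimate} as in \cite{ZumbrunHoward}.
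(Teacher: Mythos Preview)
Your proposal is correct and follows essentially the same approach as the paper: the paper's proof simply states that \eqref{e: psi derivative L1 to L1 estimate} is proved analogously to Proposition~\ref{p: psi L1 Linf estimate} with Proposition~\ref{p: psi resolvent derivative L1 to L1 estimate} replacing Proposition~\ref{p: psi resolvent L1 to L inf estimate}, and that \eqref{e: psi derivative L1 to Linf estimate} uses Proposition~\ref{p: psi resolvent derivative L1 to L inf estimate} with the extra $t^{-1/2}$ coming from the $\mathrm{O}(|\gamma|^2)$ remainder, exactly as you describe. One minor point: for the $L^1\to L^1$ estimate the reduction to $\psi_0\in L^1\cap L^\infty$ is unnecessary since Proposition~\ref{p: psi resolvent derivative L1 to L1 estimate} already requires only $f\in L^1$.
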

\begin{proof}
	The proof of \eqref{e: psi derivative L1 to L1 estimate} is completely analogous to that of Proposition \ref{p: psi L1 Linf estimate}, with Proposition \ref{p: psi resolvent derivative L1 to L1 estimate} replacing Proposition \ref{p: psi resolvent L1 to L inf estimate}. The proof of \eqref{e: psi derivative L1 to Linf estimate} is similar, using Proposition \ref{p: psi resolvent derivative L1 to L inf estimate} but gaining an extra factor of $t^{-1/2}$ decay due to the $\mathrm{O}(|\gamma|^2)$ estimate on the remainder $\| \Psi^\mathrm{left}_d(\gamma) - \Psi^\mathrm{left}_d (0)\|_{L^\infty}$. 
\end{proof}

\subsection{Small time estimates}
As $\mcl_p$ and $\mcl_\psi$ are both elliptic operators with smooth, bounded coefficients, $e^{\mcl_p t}$ and $e^{\mcl_\psi t}$ obey the same well known small time regularity estimates; see for instance \cite{Lunardi, Henry}. The estimates in this section thereby hold for $\mcl = \mcl_p$ or $\mcl_\psi$. 

\begin{lemma}\label{l: small time estimates}
	Fix $1 \leq \ell \leq \infty$. There exists a constant $C > 0$ such that for all $0 < t < 1$, we have 
	\begin{align}
		\| e^{\mcl t} f \|_{L^\infty} &\leq \frac{C}{t^{1/2}} \| f \|_{L^1} \text{ for all } f \in L^1 (\R), \label{e: small time L1 L inf estimate} \\
		\| e^{\mcl t} f \|_{W^{1, \ell}} &\leq C \| f \|_{W^{1,\ell}}  \text{ for all } f \in W^{1, \ell} (\R), \label{e: small time W 1 ell estimate}
	\end{align}
	and 
	\begin{align}
		\| \partial_{xx} (e^{\mcl t} f) \|_{L^\infty} \leq \frac{C}{t^{1/2}} \| f \|_{W^{1,\infty}}  \text{ for all } f \in W^{1, \infty} (\R). \label{e: small time 2nd derivative estimate}
	\end{align}
\end{lemma}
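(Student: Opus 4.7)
The plan is to derive these as standard parabolic small-time regularity estimates by perturbing the free heat semigroup $e^{\partial_{xx}t}$ on $\R$. Writing $\mathcal{L} = \partial_{xx} + B$ with $B = a(x)\partial_x + b(x)$ and $a, b$ smooth and uniformly bounded, Duhamel's formula reads
\begin{align*}
e^{\mathcal{L}t} f = e^{\partial_{xx}t} f + \int_0^t e^{\partial_{xx}(t-s)} B\, e^{\mathcal{L}s} f \, ds.
\end{align*}
The first step is to verify the three estimates for the free heat semigroup directly from the Gaussian kernel $K_t(x) = (4\pi t)^{-1/2} e^{-x^2/(4t)}$: Young's convolution inequality immediately yields the $L^1 \to L^\infty$ bound \eqref{e: small time L1 L inf estimate}; the commutation $\partial_x e^{\partial_{xx}t} = e^{\partial_{xx}t}\partial_x$ together with $L^\ell$-contractivity gives \eqref{e: small time W 1 ell estimate}; and the factorization $\partial_{xx} e^{\partial_{xx}t} f = \partial_x e^{\partial_{xx}t} \partial_x f$ combined with $\|\partial_x e^{\partial_{xx}t} g\|_{L^\infty} \leq C t^{-1/2} \|g\|_{L^\infty}$ produces \eqref{e: small time 2nd derivative estimate}.

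The second step is to transfer these bounds to $e^{\mathcal{L}t}$ via the Duhamel formula. Since $B : W^{1,\ell} \to L^\ell$ is bounded and the free heat semigroup satisfies $\|e^{\partial_{xx}\tau} g\|_{W^{1,\ell}} \leq C(1 + \tau^{-1/2}) \|g\|_{L^\ell}$, setting $\varphi(t) = \|e^{\mathcal{L}t} f\|_{W^{1,\ell}}$ gives
\begin{align*}
\varphi(t) \leq C \|f\|_{W^{1,\ell}} + C \int_0^t (t-s)^{-1/2} \varphi(s) \, ds,
\end{align*}
and the singular Gronwall inequality yields \eqref{e: small time W 1 ell estimate} on $0 < t < 1$. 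Estimate \eqref{e: small time L1 L inf estimate} for $e^{\mathcal{L}t}$ then follows by inserting the free-heat $L^1 \to L^\infty$ bound into Duhamel and absorbing the perturbation via a similar singular Gronwall argument, using the fact that $Be^{\mathcal{L}s} f$ is controlled once $e^{\mathcal{L}s} f$ is. For \eqref{e: small time 2nd derivative estimate}, I would write $\partial_{xx} e^{\mathcal{L}t} f = \mathcal{L} e^{\mathcal{L}t} f - B e^{\mathcal{L}t} f$ and factor $e^{\mathcal{L}t} = e^{\mathcal{L}t/2} e^{\mathcal{L}t/2}$, using \eqref{e: small time W 1 ell estimate} to propagate $W^{1,\infty}$ regularity through the first half step and the free-heat analog of \eqref{e: small time 2nd derivative estimate} (plus a Duhamel correction) through the second.

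The main obstacle is the integrable singularity at $s = t$ in the Duhamel integrals, which precludes a naive contraction argument and forces the use of a singular Gronwall-type inequality at each step. Since these are precisely the standard regularity estimates for analytic semigroups generated by bounded lower-order perturbations of the Laplacian, the cleanest practical route is simply to invoke the general theory in \cite{Lunardi, Henry}, as the authors have done in the statement of the lemma.
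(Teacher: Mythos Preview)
Your proposal is correct, and in fact goes well beyond what the paper does: the paper gives no proof of this lemma at all. It simply states the estimates and refers to \cite{Lunardi, Henry} in the sentence preceding the lemma, treating these as well-known small-time regularity facts for elliptic operators with smooth bounded coefficients. Your closing remark that the cleanest route is to invoke the general theory in \cite{Lunardi, Henry} is precisely the paper's entire ``proof.'' The Duhamel/singular-Gronwall sketch you outline is a reasonable way to make the argument self-contained, but it is not needed to match the paper.
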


%
%

\section{Nonlinear stability --- proof of Theorem \ref{t: main}}\label{s: nonlinear stability}
The system \eqref{e: p equation}-\eqref{e: psi equation} for $(p, \psi)$ is locally well posed in $L^1 (\R) \cap L^\infty(\R) \times W^{1, \infty} (\R)$. By parabolic regularity, a solution with initial data $(p_0, \psi_0)$ small in $L^1 (\R) \cap L^\infty(\R) \times W^{1, \infty} (\R)$ is small in $W^{1, \infty} (\R) \cap W^{1,1} (\R) \times W^{1, \infty} (\R)$ after a fixed short time. Hence in proving Theorem \ref{t: main}, we may assume without loss of generality that $\partial_x p_0$ is small in $L^1 \cap L^\infty$.

The system \eqref{e: p equation}-\eqref{e: psi equation} for $(p, \psi)$ is also locally well posed (in particular) in $W^{1, \infty}_{0, -1} (\R) \times W^{1, \infty} (\R)$ by standard theory of semilinear parabolic equations \cite{Henry, Lunardi}, in the sense that given any $(p_0, \psi_0) \in W^{1, \infty}_{0, -1} (\R) \times W^{1, \infty} (\R)$, there exists a maximal existence time $T_* \in (0, \infty]$ and a unique solution $(p(t), \psi(t))$ to \eqref{e: p equation}-\eqref{e: psi equation} for $t \in (0, T_*)$ satisfying 
\begin{align}
	\lim_{t \to 0^+} \begin{pmatrix}
		(\mcl_p - \lambda)^{-1} & 0 \\
		0 & (\mcl_\psi - \lambda)^{-1}
	\end{pmatrix} 
	\begin{pmatrix}
		p(t) \\
		\psi(t) 
	\end{pmatrix} = 
	\begin{pmatrix}
		(\mcl_p - \lambda)^{-1} & 0 \\
		0 & (\mcl_\psi - \lambda)^{-1}
	\end{pmatrix} 
	\begin{pmatrix}
		p_0 \\
		 \psi_0
	\end{pmatrix}
\end{align}
for any $\lambda$ in the resolvent sets of both $\mcl_p$ and $\mcl_\psi$. Furthermore, the maximal existence time $T_*$ depends only on $\| (p_0, \psi_0) \|_{W^{1, \infty}_{0, -1} \times W^{1, \infty}}$. For the remainder of this section, we let $(p, \psi)$ be a solution with initial data $(p_0, \psi_0) \in L^1_{0,1} (\R) \cap W^{1,\infty} (\R) \cap W^{1,1} (\R) \times L^1_{0,1} (\R) \cap W^{1, \infty}(\R)$ and let $T_*$ denote its maximal existence time in $W^{1, \infty}_{0, -1} (\R) \times W^{1,\infty} (\R)$. 

We then define the norm
\begin{multline}
	\Theta(t) = \sup_{0 < s \leq t} \bigg[ (1+s)^{3/2} \| p(s) \|_{W^{1, \infty}_{0, -1}} + (1+s)^{1/2} \| p(s) \|_{W^{1,1}} + (1+s)^{1/2} \| \psi(s) \|_{L^\infty} \\ + (1+s) \| \psi(s) \|_{L^\infty_{-1, 0}}  + s^{1/2} \| \psi_x (s) \|_{L^1} + (1+s) \| \psi_x (s) \|_{L^\infty}+ 1_{\{0 < s \leq 1\}} s^{1/2} \|\psi_{xx}(s)\|_{L^\infty} \bigg]. \label{e: theta def}
\end{multline}

By the local well-posedness theory, uniform control of $\Theta(t)$ up to time $T_*$ implies global existence of the solution to \eqref{e: p equation}-\eqref{e: psi equation} together with the desired decay estimates. 

To control the nonlinearity, we will use the fact that by Taylor's theorem, there exists a constant $C > 0$ such that 
\begin{align}
	\left| \frac{1}{1 + \omega^{-1} q_*^{-1} p} - 1 \right| \leq C | \omega^{-1} q_*^{-1} p|, \quad \left| \frac{1}{1 + \omega^{-1} q_*^{-1} p} \right| \leq C, \label{e: nonlinearity Taylor expansion}
\end{align}
and 
\begin{align}
	| \omega^{-2} p^3| \leq C | \omega^{-1} p^2|,
\end{align}
provided $\| \omega^{-1} q_*^{-1} p \|_{L^\infty} \leq \frac{1}{2}$. We will obtain global control of $\Theta(t)$ through the estimate in the following proposition.  

\begin{prop}\label{p: theta control}
	There exist positive constants $C_1$ and $C_2$ such that the function $\Theta(t)$ from \eqref{e: theta def} satisfies
	\begin{align}
		\Theta(t) \leq C_1 \left( \| p_0\|_{L^1_{0, 1}} + \| p_0\|_{W^{1,\infty} \cap W^{1,1}} + \| \psi_0 \|_{L^1_{0,1}} + \| \psi_0 \|_{W^{1,\infty}} \right) + C_2 \Theta(t)^2,
	\end{align}
	for all $t \in (0, T_*)$, provided $\| \omega^{-1} q_*^{-1} p(s) \|_{L^\infty} \leq \frac{1}{2}$ for all $s \in (0, t)$. 
\end{prop}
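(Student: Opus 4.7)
The approach is a standard Duhamel-plus-bootstrap argument adapted to the multi-component norm $\Theta(t)$. Writing
\begin{align*}
p(t) &= e^{\mcl_p t} p_0 + \int_0^t e^{\mcl_p (t-s)} N_p(s)\, ds, \\
\psi(t) &= e^{\mcl_\psi t}\psi_0 + \int_0^t e^{\mcl_\psi (t-s)} N_\psi(s)\, ds,
\end{align*}
where $N_p, N_\psi$ denote the nonlinear right-hand sides of \eqref{e: p equation}-\eqref{e: psi equation}, each of the seven terms in \eqref{e: theta def} is matched to a single linear estimate from Section \ref{s: linear estimates}: Propositions \ref{p: linear p t-3/2 estimate}, \ref{p: p L11 to L1 estimate}, \ref{p: psi L1 Linf estimate}, \ref{p: psi L1 01 Linf-10 estimate}, and \ref{p: linear psi derivative estimates}, complemented by Lemma \ref{l: small time estimates} for $0 < t < 1$ and for the $\psi_{xx}$ term. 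Applied to the initial data, these produce the $C_1(\cdots)$ contribution. The task then reduces to bounding each of the seven associated Duhamel integrals by $C\,\Theta(t)^2$.

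The nonlinear integrands are estimated pointwise by expanding $(\omega^{-1}q_*^{-1}\psi)_x = (\omega^{-1}q_*^{-1})_x\psi + \omega^{-1}q_*^{-1}\psi_x$ and using the Taylor bounds \eqref{e: nonlinearity Taylor expansion} under the hypothesis $\|\omega^{-1}q_*^{-1}p\|_{L^\infty}\le 1/2$. The resulting monomials carry exponentially localized coefficients: $q_*\omega^{-1}$ for the pure-$p$ terms in $N_p$; $\omega q_*[(\omega^{-1}q_*^{-1})_x]^2$, $(\omega^{-1}q_*^{-1})_x$, and $\omega^{-1}q_*^{-1}$ for the $\psi^2$, $\psi\psi_x$, and $\psi_x^2$ terms in $N_p$; and analogous localized coefficients (in particular $\omega q_*'$, which is exponentially localized on the left) in $N_\psi$. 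These coefficients pointwise absorb all algebraic weights $\rho_{0,\pm 1}$, $\rho_{-1,0}$ appearing in $\Theta$, yielding bounds of schematic form
\begin{align*}
\|N_p(s)\|_{L^1_{0,1}} &\le C\bigl(\|p\|_{W^{1,\infty}_{0,-1}}^2 + \|\psi\|_{L^\infty_{-1,0}}^2 + \|\psi\|_{L^\infty}\|\psi_x\|_{L^\infty} + \|\psi_x\|_{L^1}\|\psi_x\|_{L^\infty}\bigr), \\
\|N_\psi(s)\|_{L^1} &\le C\bigl(\|p\|_{W^{1,\infty}_{0,-1}}\|\psi_x\|_{L^1} + \|p\|_{W^{1,1}}\|\psi_x\|_{L^\infty} + \|\psi\|_{L^\infty_{-1,0}}\|\psi_x\|_{L^1}\bigr),
\end{align*}
and similar bounds in $L^\infty$ and $W^{1,\infty}$ when those are the target norms. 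Each product on the right is bounded by $\Theta(s)^2$ divided by a power $(1+s)^\delta$ (or $s^{\mu}(1+s)^{\delta-\mu}$ for small $s$), which is then paired with the decay kernel $(t-s)^{-\beta}(1+t-s)^{-(\gamma-\beta)}$ from the linear estimate. Splitting the $s$-integral into $[0,t/2]$ and $[t/2,t]$ and exchanging kernel and source decay in the respective halves produces the desired bound $C\Theta(t)^2$ term by term.

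The main obstacle is the contribution of the $\psi^2$-type term $\omega q_*[(\omega^{-1}q_*^{-1})_x]^2\psi^2$ and the $\psi_x^2$-type term $\omega^{-1}q_*^{-1}\psi_x^2$ to the $(1+t)^{3/2}\|p(t)\|_{W^{1,\infty}_{0,-1}}$ slot: the pure diffusive rate $\|\psi(s)\|_{L^\infty}\lesssim (1+s)^{-1/2}$ is not enough to render the resulting Duhamel integral convergent at rate $(1+t)^{-3/2}$. This is precisely why $\Theta$ also controls $\|\psi\|_{L^\infty_{-1,0}}$ at rate $(1+s)^{-1}$ via Proposition \ref{p: psi L1 01 Linf-10 estimate} and $\|\psi_x\|_{L^\infty}$ at rate $(1+s)^{-1}$ via Proposition \ref{p: linear psi derivative estimates}. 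Because the localized coefficients $\omega q_*[(\omega^{-1}q_*^{-1})_x]^2$ and $\omega^{-1}q_*^{-1}$ pointwise absorb the weight $\rho_{-1,0}^{-1}$, these improved rates may be used in place of the $L^\infty$ rate, producing a source of size $\Theta(s)^2(1+s)^{-2}$ which integrates against the $(1+t-s)^{-3/2}$ kernel to give the required $(1+t)^{-3/2}$ decay. A parallel balancing, using the exponential localization of $\omega q_*'$ on the left together with $\|\psi\|_{L^\infty_{-1,0}}$, closes the $\psi$-equation. The final step is a book-keeping verification that every exponent combination $(\alpha,\beta,\gamma,\mu,\delta)$ arising in the seven $\times$ (few nonlinear monomials) estimates satisfies the integrability conditions, after which one absorbs all products of two factors of $\Theta$ into $C_2\Theta(t)^2$ and concludes.
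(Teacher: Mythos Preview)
Your approach is essentially the paper's own: variation of constants, term-by-term matching with the linear estimates of Section~\ref{s: linear estimates}, exploitation of the exponentially localized coefficients to absorb algebraic weights, and the integral lemma (your $[0,t/2]\cup[t/2,t]$ split plays the role of the paper's Lemma~\ref{l: nonlinearity integral estimate} together with the $[0,t-1]\cup[t-1,t]$ split). You also correctly isolate the decisive point, namely that the $\psi^2$ and $\psi_x^2$ contributions to the $p$-equation close only because $\Theta$ tracks $\|\psi\|_{L^\infty_{-1,0}}$ and $\|\psi_x\|_{L^\infty}$ at rate $(1+s)^{-1}$.

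Two of your schematic bounds are inaccurate, however, and would not work as written. First, $\|p\|_{W^{1,\infty}_{0,-1}}^2$ cannot control $\|3q_*\omega^{-1}p^2\|_{L^1_{0,1}}$: on the left the coefficient $q_*\omega^{-1}$ is merely bounded, so you need genuine $L^1$ information on $p$. The paper (Lemma~\ref{l: Ip1 L inf 0 -1}) uses $\|p\|_{L^1}\|p\|_{L^\infty_{0,-1}}\le \Theta^2(1+s)^{-2}$, which is why the $\|p\|_{W^{1,1}}$ slot is included in $\Theta$. Second, your displayed bound for $\|N_\psi\|_{L^1}$ lists a $\|\psi\|_{L^\infty_{-1,0}}\|\psi_x\|_{L^1}$ term, but every monomial in $N_\psi$ is a $p$--$\psi$ product (the factor $(1+\omega^{-1}q_*^{-1}p)^{-1}-1$ Taylor-expands to a multiple of $p$); the corresponding bound should read $\|p\|_{L^1}\|\psi\|_{L^\infty_{-1,0}}$ or $\|p\|_{W^{1,1}}\|\psi\|_{L^\infty_{-1,0}}$. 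Both fixes are immediate given what $\Theta$ already controls, so the overall strategy stands.
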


We start by rewriting the system \eqref{e: p equation}-\eqref{e: psi equation} in mild form via the variation of constants formula, obtaining
\begin{align}
	p(t) &= e^{\mcl_p t} p_0 + \mathcal{I}_{p,1} (t) + \mathcal{I}_{p, 2} (t) + \mathcal{I}_{p, 3} (t) + \mathcal{I}_{p, 4} (t), \label{e: voc p} \\
	\psi(t) &= e^{\mcl_\psi t} \psi_0 + \mathcal{I}_{\psi, 1} (t) + \mathcal{I}_{\psi, 2} (t) + \mathcal{I}_{\psi, 3} (t) \label{e: voc psi},
\end{align} 
where
\begin{align}
	\mathcal{I}_{p, 1} (t) &= - \int_0^t e^{\mcl_p (t-s)} \left[ 3 q_* \omega^{-1} p(s)^2 + \omega^{-2} p(s)^3 \right] \, ds, \\
	\mathcal{I}_{p, 2} (t) &= - \int_0^t e^{\mcl_p (t-s)} \left[ \omega^{-1} q_*^{-1} \psi_x(s)^2 \right] \, ds, \\
	\mathcal{I}_{p, 3} (t) &= - \int_0^t e^{\mcl_p (t-s)} \left[ \omega q_* [(\omega^{-1} q_*^{-1})_x]^2 \psi(s)^2 \right] \, ds, \\
	\mathcal{I}_{p, 4} (t) &= - \int_0^t e^{\mcl_p (t-s)} \left[ p(s) [(\omega^{-1} q_*^{-1} \psi(s))_x]^2 \right] \, ds,
\end{align}
and 
\begin{align}
	\mathcal{I}_{\psi, 1} (t) &= \int_0^t e^{\mcl_\psi (t-s)} \left[ 2 \frac{q_*'}{q_*} \left( \frac{1}{1+ \omega^{-1} q_*^{-1} p} - 1 \right) \psi_x (s) \right] \, ds, \\
	\mathcal{I}_{\psi, 2} (t) &= \int_0^t e^{\mcl_\psi (t-s)} \left[ 2 \omega q_*' \left( \frac{1}{1+ \omega^{-1} q_*^{-1} p} - 1 \right) (\omega^{-1} q_*^{-1})_x \psi(s) \right] \, ds, \\
	\mathcal{I}_{\psi, 3} (t) &= \int_0^t e^{\mcl_\psi (t-s)} \left[ 2 \omega (\omega^{-1} p)_x \left( \frac{1}{1+ \omega^{-1} q_*^{-1}} \right) (\omega^{-1} q_*^{-1} \psi(s))_x \right] \, ds. 
\end{align}

The coefficients in the nonlinear terms gain spatial localization due to the factors involving $\omega$ and $q_*$. Precise estimates on this localization effect may be inferred from the front asymptotics \cite{aronson},
\begin{align}
	q_*(x) = \begin{cases}
		(a+bx) e^{-x} + \mathrm{O}(x^2 e^{-2x}), &x \to \infty, \\
		1 - c_1 e^{(-1 + \sqrt{2}) x} + \mathrm{O} (e^{(-2+2\sqrt{2})x}), & x \to -\infty.  
	\end{cases}
	\label{e: front asymptotics}
\end{align}

To characterize decay rates of nonlinearities throughout this section, we will make frequent use of the following elementary estimate.

\begin{lemma}\label{l: nonlinearity integral estimate}
	Fix $\alpha > 0$ and let $\beta > 1$, further assuming $\beta \geq \alpha$. There exists a constant $C > 0$ such that for all $t > 1$, we have 
	\begin{align}
		\int_0^{t-1} \frac{1}{(t-s)^\alpha} \frac{1}{(1+s)^\beta} \, ds \leq \frac{C}{t^\alpha}. 
	\end{align}
\end{lemma}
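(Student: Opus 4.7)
The plan is to prove this via the standard splitting of the integration interval at $s = t/2$, handling each half by bounding one of the two factors by its maximum on that half.

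On the near half $[0, t/2]$, we have $t - s \geq t/2$ so $(t-s)^{-\alpha} \leq 2^\alpha t^{-\alpha}$, and I would estimate
\begin{equation*}
\int_0^{t/2} \frac{1}{(t-s)^\alpha}\frac{1}{(1+s)^\beta}\, ds \leq \frac{2^\alpha}{t^\alpha} \int_0^\infty \frac{ds}{(1+s)^\beta} = \frac{C}{t^\alpha},
\end{equation*}
where the last integral converges because $\beta > 1$.

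On the far half $[t/2, t-1]$, the factor $(1+s)^{-\beta}$ is bounded by $C t^{-\beta}$, so the task reduces to estimating $\int_{t/2}^{t-1}(t-s)^{-\alpha}\, ds = \int_1^{t/2} u^{-\alpha}\, du$ after the substitution $u = t-s$. This integral is $O(t^{1-\alpha})$ if $\alpha < 1$, $O(\log t)$ if $\alpha = 1$, and $O(1)$ if $\alpha > 1$. Multiplying back by $t^{-\beta}$, I would obtain in the three cases $C t^{1-\alpha-\beta}$, $C t^{-\beta}\log t$, and $C t^{-\beta}$ respectively, each of which is bounded by $C t^{-\alpha}$ for $t > 1$: the first because $\beta > 1$ implies $1 - \beta \leq 0$; the second because $\beta > 1$ implies $t^{1-\beta} \log t$ is bounded; the third because $\beta \geq \alpha$ directly gives $t^{-\beta} \leq t^{-\alpha}$.

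There is no real obstacle here. The only subtlety is which of the two hypotheses ($\beta > 1$ versus $\beta \geq \alpha$) is needed to absorb the far-half contribution: both are essential, with $\beta > 1$ giving convergence of the near-half integral and handling the sub- and critical cases $\alpha \leq 1$ on the far half, while $\beta \geq \alpha$ is precisely what is needed in the super-critical case $\alpha > 1$ to match the target rate $t^{-\alpha}$.
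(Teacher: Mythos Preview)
Your proof is correct and follows essentially the same approach as the paper: split at $s=t/2$, bound $(t-s)^{-\alpha}$ on the near half and $(1+s)^{-\beta}$ on the far half, then treat the three cases $\alpha<1$, $\alpha=1$, $\alpha>1$ exactly as you do. The only minor point the paper addresses explicitly that you leave implicit is that for $1<t<2$ one has $t-1<t/2$, so the far half is empty and the whole integral is trivially bounded; this is harmless since your near-half bound already covers the entire interval in that regime.
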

\begin{proof}
	The integral in question is clearly uniformly bounded for $1 < t < 2$, since $s$ is bounded away from $t$ in the region of integration, so there is no singularity in the integrand. We may therefore assume $t > 2$. When estimating the full nonlinearities, there will of course be a piece of the integral from $t-1$ to $t$, but here we focus on only the part from $0$ to $t-1$ in order to separate the large time decay from small time regularity estimates. 
	
	For $t > 2$, we note that $t-1 > \frac{t}{2}$, and write 
	\begin{align*}
		\int_0^{t-1} \frac{1}{(t-s)^\alpha} \frac{1}{(1+s)^\beta} \, ds = \int_0^{t/2} \frac{1}{(t-s)^\alpha} \frac{1}{(1+s)^\beta} \, ds + \int_{t/2}^{t-1} \frac{1}{(t-s)^\alpha} \frac{1}{(1+s)^\beta} \, ds. 
	\end{align*}
	In the first region of integration, $t-s \sim t$, and so we have 
	\begin{align*}
		\int_0^{t/2} \frac{1}{(t-s)^\alpha} \frac{1}{(1+s)^\beta} \, ds \leq \frac{C}{t^\alpha} \int_0^{t/2} \frac{1}{(1+s)^\beta} \, ds \leq \frac{C}{t^\alpha} \int_0^\infty \frac{1}{(1+s)^\beta} \, ds \leq \frac{C}{t^\alpha} 
	\end{align*}
	since $\beta > 1$. In the second region of integration $s \sim t$, and so we have 
	\begin{align*}
		\int_{t/2}^{t-1} \frac{1}{(t-s)^\alpha} \frac{1}{(1+s)^\beta} \, ds \leq \frac{C}{(1+t)^\beta} \int_{t/2}^{t-1} \frac{1}{(t-s)^\alpha} \, ds &= \frac{C}{(1+t)^\beta} \int_1^{t/2} \frac{1}{\tau^\alpha} \, d \tau 
	\end{align*}
	If $\alpha > 1$, we have 
	\begin{align*}
		\frac{C}{(1+t)^\beta} \int_1^{t/2} \frac{1}{\tau^\alpha} \, d \tau \leq \frac{C}{(1+t)^\beta} \leq \frac{C}{t^\alpha}
	\end{align*}
	for $t > 2$, since $\beta \geq \alpha$. If $\alpha = 1$, we gain a factor of $\log t$, but since $\beta > 1$, this can be absorbed by $(1+t)^{-\beta}$ while still retaining decay at rate $t^{-1}$. Finally, if $\alpha < 1$ we have 
	\begin{align*}
		\frac{C}{(1+t)^\beta} \int_1^{t/2} \frac{1}{\tau^\alpha} \, d \tau \leq \frac{C}{(1+t)^\beta} t^{1-\alpha} \leq \frac{C}{(1+t)^{\beta + \alpha - 1}} \leq \frac{C}{(1+t)^\alpha}
	\end{align*}
	for $t > 2$, since $\beta > 1$, so $\beta + \alpha - 1 > \alpha$, completing the proof of the lemma.
\end{proof}
We note that if $\alpha < 1$, then the integral from $0$ to $t-1$ in Lemma \ref{l: nonlinearity integral estimate} may be replaced by an integral from $0$ to $t$, since the singularity in the integrand near $s = t$ is integrable in this case.

We prove Proposition \ref{p: theta control} by breaking the estimate into several pieces, first establishing control of $p(t)$. 
\subsection{Estimates on $p(t)$}

We begin by estimating $\| p(t) \|_{W^{1,\infty}_{0, -1}}$, taking advantage of the estimates on $p$ and $\psi$ encoded in the definition of $\Theta$. 

\begin{lemma}[$W^{1,\infty}_{0,-1}$ estimates on $\mathcal{I}_{p, 1} (t)$]\label{l: Ip1 L inf 0 -1}
	There exists a constant $C > 0$ such that for all $t \in (0, T_*)$ we have 
	\begin{align*}
		(1+t)^{3/2} \| \mathcal{I}_{p,1} (t) \|_{W^{1, \infty}_{0,-1}} \leq C \Theta(t)^2
	\end{align*}
	provided $\| \omega^{-1} q_*^{-1} p(s) \|_{L^\infty} \leq \frac{1}{2}$ for all $s \in (0, t)$. 
\end{lemma}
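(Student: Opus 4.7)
The plan is to apply Duhamel's formula to $\mathcal{I}_{p,1}(t)$ and combine Proposition~\ref{p: linear p t-3/2 estimate} with a pointwise bound on the nonlinearity in $L^1_{0,1}$.

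The first and central task is to show
\begin{equation*}
N(s) := \bigl\| 3 q_* \omega^{-1} p(s)^2 + \omega^{-2} p(s)^3 \bigr\|_{L^1_{0,1}} \leq \frac{C\,\Theta(s)^2}{(1+s)^2}.
\end{equation*}
The key observation is that the prefactor $\rho_{0,1}(x)\, q_*(x)\, \omega(x)^{-1} \rho_{0,-1}(x)^{-1}$ is uniformly bounded on $\R$: on $x \geq 1$ it is of order $\langle x \rangle^3 e^{-2x}$ by \eqref{e: front asymptotics}, while on $x \leq -1$ it reduces to $q_*(x) \leq 1$. A H\"older split therefore gives
\begin{equation*}
\int_\R \rho_{0,1}\, q_* \omega^{-1} p^2 \, dx \leq \bigl\|\rho_{0,1}\, q_* \omega^{-1} \rho_{0,-1}^{-1}\bigr\|_{L^\infty} \|\rho_{0,-1} p\|_{L^\infty} \|p\|_{L^1} \leq C \|\rho_{0,-1} p\|_{L^\infty} \|p\|_{L^1}.
\end{equation*}
The cubic term is subsumed via the Taylor bound $|\omega^{-2} p^3| \leq C|\omega^{-1} p^2|$ guaranteed by the smallness hypothesis, with $\rho_{0,1}\, \omega^{-1} \rho_{0,-1}^{-1}$ bounded by the same type of calculation (one fewer power of $e^{-x}$ on the right suffices). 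Inserting the $\Theta$-controls $\|\rho_{0,-1} p(s)\|_{L^\infty} \leq \Theta(s)(1+s)^{-3/2}$ and $\|p(s)\|_{L^1} \leq \Theta(s)(1+s)^{-1/2}$ yields the advertised bound on $N(s)$.

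For $t \geq 1$ I split the Duhamel integral at $s = t-1$. On $[0, t-1]$, Proposition~\ref{p: linear p t-3/2 estimate} supplies the kernel $(t-s)^{-3/2}$, and Lemma~\ref{l: nonlinearity integral estimate} with $\alpha = 3/2$, $\beta = 2$ produces
\begin{equation*}
\int_0^{t-1} \frac{C}{(t-s)^{3/2}} \cdot \frac{\Theta(s)^2}{(1+s)^2} \, ds \leq \frac{C\,\Theta(t)^2}{t^{3/2}}.
\end{equation*}
On the short-time piece $[t-1, t]$ (and on all of $[0,t]$ when $t \leq 1$), I use boundedness of $e^{\mcl_p \tau}$ on $W^{1,\infty}_{0,-1}$ for $\tau \in (0,1]$, with the standard $\tau^{-1/2}$ cost on the derivative, extending Lemma~\ref{l: small time estimates} to the algebraically weighted space. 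Here I estimate $\|q_* \omega^{-1} p^2\|_{W^{1,\infty}_{0,-1}}$ by $C\Theta(s)^2(1+s)^{-3}$, using that $\rho_{0,-1}^{-1} q_* \omega^{-1}$ and its spatial derivative are bounded on $\R$ (the latter inheriting the same localization). Integrating over the short window returns at worst $C\Theta(t)^2(1+t)^{-3}$, which multiplied by $(1+t)^{3/2}$ is harmless.

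The main obstacle is the weight matching in the nonlinearity estimate: $q_*\omega^{-1}$ is exponentially localized only as $x \to +\infty$ and is merely bounded as $x \to -\infty$, so $\rho_{0,1}\,q_*\omega^{-1}$ fails to be integrable. The H\"older split that transfers one factor of $p$ into $L^1$ is what sidesteps this non-integrability on the left, and it is precisely the reason the $\Theta$ norm must track $\|p\|_{W^{1,1}}$ with its own $(1+s)^{-1/2}$ rate---motivating the $L^1$ linear estimate of Proposition~\ref{p: p L11 to L1 estimate}.
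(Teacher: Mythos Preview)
Your proposal is correct and follows essentially the same route as the paper: both bound the nonlinearity in $L^1_{0,1}$ by $C\Theta(s)^2(1+s)^{-2}$ via the H\"older split $\|p\|_{L^1}\|p\|_{L^\infty_{0,-1}}$, split the Duhamel integral at $t-1$, apply Proposition~\ref{p: linear p t-3/2 estimate} with Lemma~\ref{l: nonlinearity integral estimate} on $[0,t-1]$, and invoke the short-time bound from Lemma~\ref{l: small time estimates} on $[t-1,t]$. Your commentary on weight matching and the role of the $W^{1,1}$ control in $\Theta$ makes the paper's one-line nonlinearity estimate more transparent; the only superfluous remark is the ``$\tau^{-1/2}$ cost on the derivative'' for the short-time piece, since you then estimate the nonlinearity in $W^{1,\infty}_{0,-1}$ anyway, where the semigroup is simply bounded.
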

\begin{proof}
	We split the integral in the definition of $\mathcal{I}_{p,1} (t)$ as 
	\begin{align*}
		\mathcal{I}_{p, 1} (t) = -\left( \int_0^{t-1} + \int_{t-1}^t \right) \left[ e^{\mcl_p (t-s)} \left[ 3 q_* \omega^{-1} p(s)^2 + \omega^{-2} p(s)^3 \right] \, ds \right]  
	\end{align*}
	For the first piece, we have by Proposition \ref{p: linear p t-3/2 estimate}
	\begin{multline*}
		(1+t)^{3/2} \left\| \int_0^{t-1} e^{\mcl_p (t-s)} \left[ 3 q_* \omega^{-1} p(s)^2 + \omega^{-2} p(s)^3 \right] \, ds \right\|_{W^{1,\infty}_{0,-1}} \\ \leq C (1+t)^{3/2} \int_0^{t-1} \frac{1}{(t-s)^{3/2}} \left\| 3 q_* \omega^{-1} p(s)^2 + \omega^{-2} p(s)^3 \right \|_{L^1_{0,1}} \, ds.
	\end{multline*}
	The nonlinear terms have exponentially localized coefficients, so that 
	\begin{align*}
		\left\| 3 q_* \omega^{-1} p(s)^2 + \omega^{-2} p(s)^3 \right \|_{L^1_{0,1}} &\leq C \left( \| p(s) \|_{L^1} \| p(s) \|_{L^\infty_{0,-1}} + C \| p(s) \|_{L^1} \| p(s) \|_{L^\infty_{0,-1}} \right) \\
		&\leq C \Theta(s)^2 \frac{1}{(1+s)^2},
	\end{align*}
	provided $\| \omega^{-1} q_*^{-1} p(s) \|_{L^\infty} \leq \frac{1}{2}$ for all $s \in (0, t)$.
	Hence, since $\Theta(t)$ is non-decreasing,  we have 
	\begin{multline*}
		(1+t)^{3/2} \left\| \int_0^{t-1} e^{\mcl_p (t-s)} \left[ 3 q_* \omega^{-1} p(s)^2 + \omega^{-2} p(s)^3 \right] \, ds \right\|_{W^{1,\infty}_{0,-1}} \\ \leq C \Theta(t)^2 (1+t)^{3/2} \int_0^{t-1} \frac{1}{(t-s)^{3/2}} \frac{1}{(1+s)^2} \, ds
	\end{multline*}
	By Lemma \ref{l: nonlinearity integral estimate}, we have 
	\begin{align*}
		(1+t)^{3/2} \int_0^{t-1} \frac{1}{(t-s)^{3/2}} \frac{1}{(1+s)^2} \, ds \leq C,
	\end{align*}
	so that we have the desired estimate for the piece of the integral from $0$ to $t-1$. The estimate on the integral from $t-1$ to $t$ is similar, but we use the small time regularity estimate \eqref{e: small time W 1 ell estimate} from Lemma \ref{l: small time estimates} in place of Proposition \ref{p: linear p t-3/2 estimate}. 
\end{proof}

\begin{lemma}[$W^{1, \infty}_{0, -1}$ estimates on $\mathcal{I}_{p, 2} (t)$]\label{l: Ip2 L inf 0 -1 estimates}
	There exists a constant $C > 0$ such that for all $t \in (0, T_*)$, we have 
	\begin{align}
		(1+t)^{3/2} \| \mathcal{I}_{p, 1} (t) \|_{W^{1, \infty}_{0, -1}} \leq C \Theta(t)^2. 
	\end{align}
\end{lemma}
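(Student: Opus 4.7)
The plan is to follow the template of Lemma~\ref{l: Ip1 L inf 0 -1}: split the time integral defining $\mathcal{I}_{p,2}(t)$ at $s = t-1$, apply Proposition~\ref{p: linear p t-3/2 estimate} on the piece $\int_0^{t-1}$, and apply the small-time estimates of Lemma~\ref{l: small time estimates} (together with the companion sectorial estimate $\|\partial_x e^{\mcl_p t} f\|_{L^\infty} \leq C t^{-1/2}\|f\|_{L^\infty}$, a standard small-time bound for elliptic parabolic semigroups of the type referenced before that lemma) on the remaining piece. The key coefficient observation is that $\rho_{0,1}(x)\omega(x)^{-1}q_*(x)^{-1}$ is uniformly bounded on $\R$: by the front asymptotics~\eqref{e: front asymptotics}, $\omega^{-1}q_*^{-1}\sim(a+bx)^{-1}$ as $x\to+\infty$, which is canceled by $\rho_{0,1}\sim\langle x\rangle$, while on the left $\omega\equiv 1$ and $q_*\to 1$, so boundedness is immediate.

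Combined with the $\Theta$ controls $\|\psi_x(s)\|_{L^1}\leq \Theta(s)\, s^{-1/2}$ and $\|\psi_x(s)\|_{L^\infty}\leq \Theta(s)(1+s)^{-1}$, this yields
\begin{align*}
\|\omega^{-1}q_*^{-1}\psi_x(s)^2\|_{L^1_{0,1}} \leq C\|\psi_x(s)\|_{L^1}\|\psi_x(s)\|_{L^\infty} \leq C\Theta(s)^2 \, s^{-1/2}(1+s)^{-1}.
\end{align*}
For the piece $\int_0^{t-1}$, Proposition~\ref{p: linear p t-3/2 estimate} therefore reduces the estimate to bounding $\int_0^{t-1}(t-s)^{-3/2}s^{-1/2}(1+s)^{-1}\,ds$. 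Splitting this as $\int_0^1 + \int_1^{t-1}$ when $t>2$, the first part is bounded by $C(t-1)^{-3/2}\int_0^1 s^{-1/2}\,ds \leq C t^{-3/2}$, and for the second we use $s^{-1/2}(1+s)^{-1}\leq C(1+s)^{-3/2}$ and invoke Lemma~\ref{l: nonlinearity integral estimate} with $\alpha=\beta=3/2$, again giving $C t^{-3/2}$. For $t \leq 2$ the integral is $O(1)$, matching the $(1+t)^{3/2}$ prefactor.

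For the near-diagonal piece $\int_{\max(0,t-1)}^t$, the uniform boundedness of $\rho_{0,-1}$ on $\R$ reduces $\|\cdot\|_{W^{1,\infty}_{0,-1}}$ to $\|\cdot\|_{W^{1,\infty}}$. The $L^\infty$ part is controlled via~\eqref{e: small time L1 L inf estimate} applied to the $L^1$ bound above, and the derivative via the sectorial estimate paired with $\|\omega^{-1}q_*^{-1}\psi_x^2\|_{L^\infty} \leq C\Theta(s)^2(1+s)^{-2}$. The resulting time integral is $O(\Theta(t)^2\, t^{-3/2})$ for large $t$ and $O(\Theta(t)^2)$ for bounded $t$. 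The one technical wrinkle worth flagging is the $s^{-1/2}$ singularity at $s=0$ inherited from the $\Theta$ bound on $\|\psi_x\|_{L^1}$: it prevents a direct application of Lemma~\ref{l: nonlinearity integral estimate}, but is harmless because $s^{-1/2}$ is locally integrable, and the splitting $\int_0^1+\int_1^{t-1}$ absorbs it without degrading the $t^{-3/2}$ decay rate.
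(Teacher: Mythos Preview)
Your argument is correct and, for the far piece $\int_0^{t-1}$, matches the paper's proof essentially verbatim: Proposition~\ref{p: linear p t-3/2 estimate} together with $\|\omega^{-1}q_*^{-1}\psi_x(s)^2\|_{L^1_{0,1}} \leq C\|\psi_x(s)\|_{L^1}\|\psi_x(s)\|_{L^\infty}$. You are in fact slightly more careful than the paper here, since the paper writes this bound as $C\Theta(s)^2(1+s)^{-3/2}$, glossing over the $s^{-1/2}$ singularity at $s=0$ that you explicitly isolate and integrate.

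Where you differ is on the near-diagonal piece $\int_{\max(0,t-1)}^t$. The paper applies the $W^{1,\infty}\to W^{1,\infty}$ small-time bound~\eqref{e: small time W 1 ell estimate}, which forces it to estimate $\|\omega^{-1}q_*^{-1}\psi_x(s)^2\|_{W^{1,\infty}}$ and hence to control $\|\psi_{xx}(s)\|_{L^\infty}$; this is precisely why the term $1_{\{0<s\leq 1\}}s^{1/2}\|\psi_{xx}(s)\|_{L^\infty}$ appears in the definition of $\Theta$. Your route instead pairs the $L^1\to L^\infty$ smoothing~\eqref{e: small time L1 L inf estimate} with the standard sectorial bound $\|\partial_x e^{\mcl_p t}\|_{L^\infty\to L^\infty}\leq Ct^{-1/2}$, and thereby avoids any call on $\psi_{xx}$. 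Both approaches close; yours buys one fewer ingredient in $\Theta$ for this particular lemma, at the cost of invoking a small-time estimate not explicitly listed in Lemma~\ref{l: small time estimates} (though it is standard for the analytic semigroups under consideration).
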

\begin{proof}
	For $0 < t <1$, we have, using Lemma \ref{l: small time estimates},
	\begin{align*}
		(1+t)^{3/2} \| \mathcal{I}_{p, 2} \|_{W^{1, \infty}_{0, -1}} &\leq C (1+t)^{3/2} \int_0^t \left\| e^{\mcl_p (t-s)} \left[ \omega^{-1} q_*^{-1} \psi_x(s)^2 \right] \right\|_{W^{1,\infty}} \, ds \\
		&\leq C \int_0^t \| \omega^{-1} q_*^{-1} \psi_x (s)^2 \|_{W^{1, \infty}} \, ds \\
		&\leq C \int_0^t \left( \| \psi_x (s)\|_{L^\infty}^2 + \| \psi_{xx}(s) \|_{L^\infty} \| \psi_x(s) \|_{L^\infty} \right) \, ds \\
		&\leq C \Theta(t)^2 \int_0^t \left( \frac{1}{(1+s)^2} + \frac{s^{-1/2}}{1+s} \right) \, ds \\
		&\leq C \Theta(t)^2, 
	\end{align*}
	as desired, also using the fact that $\Theta(t)$ is non-decreasing by definition. 
	
	For $t > 1$, we split the integral into two pieces in order to handle small time regularity separately from the decay estimates for large times, writing
	\begin{align}
		\mathcal{I}_{p, 2} (t) = - \int_0^{t-1} e^{\mcl_p (t-s)} \left[ \omega^{-1} q_*^{-1} \psi_x (s)^2 \right] \, ds - \int_{t-1}^t e^{\mcl_p (t-s)} \left[ \omega^{-1} q_*^{-1} \psi_x (s)^2 \right] \, ds.
	\end{align}
	The integral from $t-1$ to $t$ is estimated as in the $0 < t < 1$ case above. For the other integral, we have, using Proposition \ref{p: linear p t-3/2 estimate},
	\begin{multline}
		(1+t)^{3/2} \left\| \int_0^{t-1} e^{\mcl_p (t-s)} \left[ \omega^{-1} q_*^{-1} \psi_x (s)^2 \right] \, ds \right\|_{W^{1,\infty}_{0, -1}} \\ \leq C (1+t)^{3/2} \int_0^{t-1} \frac{1}{(t-s)^{3/2}} \| \omega^{-1} q_*^{-1} \psi_x(s)^2 \|_{L^1_{0,1}} \, ds. 
	\end{multline}
	By the front asymptotics \eqref{e: front asymptotics}, we have $| \omega^{-1} q_*^{-1} | \leq C \rho_{0, -1}$, so that 
	\begin{align*}
		\| \omega^{-1} q_*^{-1} \psi_x(s)^2 \|_{L^1_{0,1}} \leq C \| \psi_x(s)^2 \|_{L^1} \leq C \| \psi_x (s) \|_{L^1} \| \psi_x(s) \|_{L^\infty} \leq C \Theta(s)^2 \frac{1}{(1+s)^{3/2}}. 
	\end{align*}
	Hence we obtain
	\begin{align*}
		(1+t)^{3/2} \left\| \int_0^{t-1} e^{\mcl_p (t-s)} \left[ \omega^{-1} q_*^{-1} \psi_x (s)^2 \right] \, ds \right\|_{W^{1,\infty}_{0, -1}} &\leq C (1+t)^{3/2} \Theta(t)^2 \int_0^{t-1} \frac{1}{(t-s)^{3/2}} \frac{1}{(1+s)^{3/2}} \, ds \\
		&\leq C \Theta(t)^2
	\end{align*}
	by Lemma \ref{l: nonlinearity integral estimate} and the fact that $\Theta(t)$ is non-decreasing, as desired. 
\end{proof}

\begin{lemma}[$W^{1, \infty}_{0, -1}$ estimates on $\mathcal{I}_{p, 3} (t)$]\label{l: Ip3 W 1 inf 0 -1 estimates}
	There exists a constant $C > 0$ such that for all $t \in (0, T_*)$, we have 
	\begin{align}
		(1+t)^{3/2} \| \mathcal{I}_{p, 3} (t) \|_{W^{1, \infty}_{0,-1}} \leq C \Theta(t)^2. 
	\end{align}
\end{lemma}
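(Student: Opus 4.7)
I will follow the two-piece structure used in Lemmas \ref{l: Ip1 L inf 0 -1} and \ref{l: Ip2 L inf 0 -1 estimates}, splitting for $t > 1$
\begin{equation*}
	\mathcal{I}_{p,3}(t) = -\left(\int_0^{t-1} + \int_{t-1}^t\right) e^{\mcl_p(t-s)}\left[\omega q_* [(\omega^{-1} q_*^{-1})_x]^2 \psi(s)^2\right] ds,
\end{equation*}
handling the first integral with the sharp linear estimate of Proposition \ref{p: linear p t-3/2 estimate} (via $L^1_{0,1} \to W^{1,\infty}_{0,-1}$ with rate $(t-s)^{-3/2}$) and the second piece (together with the full integral in the case $0 < t \leq 1$) via the small-time $W^{1,\infty} \to W^{1,\infty}$ bound \eqref{e: small time W 1 ell estimate}, noting that $\|\cdot\|_{W^{1,\infty}_{0,-1}} \leq \|\cdot\|_{W^{1,\infty}}$.

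The crux of the argument is a sharp spatial localization of the coefficient $h := \omega q_*[(\omega^{-1} q_*^{-1})_x]^2$ read off from the front asymptotics \eqref{e: front asymptotics}. For $x \to -\infty$, $\omega \equiv 1$ and $q_*(x) = 1 + \mathrm{O}(e^{(-1+\sqrt{2})x})$ make $(\omega^{-1} q_*^{-1})_x$ exponentially small, so $h$ and its derivative decay exponentially on the left. For $x \to +\infty$, $\omega q_* \sim a+bx$ and $\omega^{-1} q_*^{-1} \sim (a+bx)^{-1}$ yield the explicit decay $h(x) \sim b^2 (a+bx)^{-3}$, and $h'(x) \sim -3b^3(a+bx)^{-4}$. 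I then claim
\begin{equation*}
	\|h \psi(s)^2\|_{L^1_{0,1}} \leq C \|\psi(s)\|_{L^\infty_{-1,0}}^2 \leq \frac{C\Theta(s)^2}{(1+s)^2}.
\end{equation*}
Indeed, on the left, where $|\psi(x)| \leq C \|\psi\|_{L^\infty_{-1,0}}\langle x \rangle$ may grow linearly, the exponential decay of $h$ more than absorbs the growth. On the right, where $\rho_{0,1}(x) \sim \langle x \rangle$ and $|\psi| \leq \|\psi\|_{L^\infty_{-1,0}}$, the weighted density $\rho_{0,1}(x) |h(x)| \sim \langle x \rangle^{-2}$ is integrable.

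Combining this with Proposition \ref{p: linear p t-3/2 estimate} and Lemma \ref{l: nonlinearity integral estimate} (applied with $\alpha = 3/2$, $\beta = 2$) controls the large-time integral by $C \Theta(t)^2 (1+t)^{-3/2}$. For the small-time piece the same localization analysis, now including the term $h \cdot 2 \psi \psi_x$ and using the decay of $h'$, gives $\|h \psi^2\|_{W^{1,\infty}} \leq C(\|\psi\|_{L^\infty_{-1,0}}^2 + \|\psi\|_{L^\infty_{-1,0}}\|\psi_x\|_{L^\infty}) \leq C\Theta(s)^2 (1+s)^{-2}$; integrating over an interval of length at most $1$ and multiplying by $(1+t)^{3/2}$ contributes at most $C \Theta(t)^2 (1+t)^{-1/2}$, which is acceptable. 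The main subtlety lies in this localization bookkeeping: replacing $\|\psi\|_{L^\infty_{-1,0}}$ by the slower-decaying $\|\psi\|_{L^\infty}$ would force $\beta = 1$ in Lemma \ref{l: nonlinearity integral estimate}, which is forbidden; and the right-tail decay $h \sim x^{-3}$ must be strictly better than $x^{-2}$ for $\rho_{0,1}h$ to be integrable. Both conditions are tight and rely on the precise asymptotics \eqref{e: front asymptotics}, together with the faster decay $\|\psi\|_{L^\infty_{-1,0}} \lesssim (1+s)^{-1}$ afforded by the outward transport that is built into $\Theta$.
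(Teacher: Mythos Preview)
Your proof is correct and follows essentially the same approach as the paper's: splitting the Duhamel integral at $t-1$, applying Proposition \ref{p: linear p t-3/2 estimate} on the far piece via the bound $\|h\psi(s)^2\|_{L^1_{0,1}} \leq C\|\psi(s)\|_{L^\infty_{-1,0}}^2 \leq C(1+s)^{-2}\Theta(s)^2$, and closing with Lemma \ref{l: nonlinearity integral estimate} at $\alpha=3/2$, $\beta=2$. The paper compresses your explicit asymptotic analysis of $h$ into the single pointwise bound $|h| \leq C\,\omega_{\eta,0}\,\rho_{0,-3}$, but the content is identical; your remarks on why $\|\psi\|_{L^\infty_{-1,0}}$ (rather than $\|\psi\|_{L^\infty}$) and the $x^{-3}$ decay of $h$ are both essential are on point.
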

\begin{proof}
	We focus on the case $t > 1$, as the small time estimates can be handled in a similar manner to the proof of Lemma \ref{l: Ip2 L inf 0 -1 estimates}. We again split the integral into two pieces, writing 
	\begin{align}
		\mathcal{I}_{p, 3} (t) = - \int_0^{t-1} e^{\mcl_p (t-s)} \left[ \omega q_* [(\omega^{-1} q_*^{-1})_x ]^2 \psi(s)^2 \right] \, ds - \int_{t-1}^t  e^{\mcl_p (t-s)} \left[ \omega q_* [(\omega^{-1} q_*^{-1})_x ]^2 \psi(s)^2 \right] \, ds. 
	\end{align}
	Again, we focus on the integral from 0 to $t-1$, as the other integral involves only small time estimates similar to those in the proof of Lemma \ref{l: Ip2 L inf 0 -1 estimates}. For this first integral, we have 
	\begin{multline}
		(1+t)^{3/2} \left\| \int_0^{t-1}  e^{\mcl_p (t-s)} \left[ \omega q_* [(\omega^{-1} q_*^{-1})_x ]^2 \psi(s)^2 \right] \, ds \right\|_{W^{1, \infty}_{0, -1}} \\ \leq C (1+t)^{3/2} \int_0^{t-1} \frac{1}{(t-s)^{3/2}} \| \omega q_* [ (\omega^{-1} q_*^{-1})_x ]^2 \psi(s)^2 \|_{L^1_{0, 1}} \, ds.
	\end{multline}
	By the front asymptotics \eqref{e: front asymptotics}, we have 
	\begin{align}
		| \omega q_* [(\omega^{-1} q_*^{-1})_x]^2| \leq C \omega_{\eta, 0} \rho_{0, -3}
	\end{align}
	for a fixed $\eta > 0$ sufficiently small. In particular 
	\begin{align*}
		\| \omega q_* [ (\omega^{-1} q_*^{-1})_x ]^2 \psi(s)^2 \|_{L^1_{0, 1}} \leq C \| \omega_{\eta/2, 0} \rho_{0, -2} \|_{L^1} \| \psi(s) \|_{L^\infty_{-1, 0}}^2 \leq \frac{C}{(1+s)^2} \Theta(s)^2. 
	\end{align*}
	Hence we obtain 
	\begin{multline*}
		(1+t)^{3/2} \left\| \int_0^{t-1}  e^{\mcl_p (t-s)} \left[ \omega q_* [(\omega^{-1} q_*^{-1})_x ]^2 \psi(s)^2 \right] \, ds \right\|_{W^{1, \infty}_{0, -1}} \\ \leq C \Theta(t)^2 (1+t)^{3/2} \int_0^{t-1} \frac{1}{(t-s)^{3/2}} \frac{1}{(1+s)^2} \, ds 
		\leq C \Theta(t)^2
	\end{multline*}
	by Lemma \ref{l: nonlinearity integral estimate}, as desired. 
\end{proof}

The estimates on $\mathcal{I}_{p, 4} (t)$ are strictly easier than those on $\mathcal{I}_{p, i} (t), i = 2,3$, since the only difference is that the factor of $\omega q_*$ is replaced by a factor of $p(s)$, which has the same spatial localization but extra temporal decay: that is, we have $\| \omega q_* \|_{L^\infty_{0, -1}} \leq C$ but $\| p(s) \|_{W^{1,\infty}_{0, -1}} \leq C (1+s)^{-3/2} \Theta(s)$. We thereby obtain the following lemma. 

\begin{lemma}[$W^{1, \infty}_{0, -1}$ estimates on $\mathcal{I}_{p, 4} (t)$]\label{l: Ip4 W inf 0 -1}
	There exists a constant $C > 0$ such that for all $t \in (0, T_*)$, we have 
	\begin{align*}
		(1+t)^{3/2} \| \mathcal{I}_{p, 4} (t) \|_{W^{1, \infty}_{0, -1}} \leq C \Theta(t)^2
	\end{align*}	
	provided $\| \omega^{-1} q_*^{-1} p(s) \|_{L^\infty} \leq \frac{1}{2}$ for all $s \in (0, t)$.
\end{lemma}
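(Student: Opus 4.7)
The plan is to mirror the strategy of the proofs of Lemmas \ref{l: Ip2 L inf 0 -1 estimates} and \ref{l: Ip3 W 1 inf 0 -1 estimates}. For $t > 1$, I would split $\mathcal{I}_{p,4}(t) = \int_0^{t-1} + \int_{t-1}^t$: the small-time piece is handled using Lemma \ref{l: small time estimates} exactly as in the preceding lemmas (and the $0 < t \leq 1$ case follows the same way), while Proposition \ref{p: linear p t-3/2 estimate} reduces the large-time piece to controlling
\begin{align*}
(1+t)^{3/2} \int_0^{t-1} \frac{1}{(t-s)^{3/2}} \bigl\| p(s) [(\omega^{-1} q_*^{-1} \psi(s))_x]^2 \bigr\|_{L^1_{0,1}} \, ds.
\end{align*}

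The key observation, already highlighted before the lemma statement, is that $p(s)$ has the same $L^\infty_{0,-1}$ spatial localization as $\omega q_*$ but with extra temporal decay $\|p(s)\|_{L^\infty_{0,-1}} \leq \Theta(s)/(1+s)^{3/2}$. I would therefore factor $p(s)$ out pointwise using $|p(s)(x)| \leq \|p(s)\|_{L^\infty_{0,-1}} \, \rho_{0,1}(x)$; the extra $\rho_{0,1}$ combines with the weight from the $L^1_{0,1}$ norm and the spatial decay of the $\omega^{-1} q_*^{-1}$ and $(\omega^{-1} q_*^{-1})_x$ factors (from the front asymptotics \eqref{e: front asymptotics}). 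Expanding $[(\omega^{-1} q_*^{-1} \psi)_x]^2$ into $\psi^2$, $\psi\psi_x$, and $\psi_x^2$ pieces, the $\psi^2$ and $\psi_x^2$ contributions reduce exactly to the structures estimated in the proofs of Lemmas \ref{l: Ip3 W 1 inf 0 -1 estimates} and \ref{l: Ip2 L inf 0 -1 estimates} respectively, each giving $C\Theta(s)^2 (1+s)^{-\alpha}$ with $\alpha \in \{3/2, 2\}$ from the $\Theta$-bounds on $\psi$ and $\psi_x$. Multiplied by the $\Theta(s)(1+s)^{-3/2}$ from the $p$-factor, the nonlinearity is bounded in $L^1_{0,1}$ at rate $C\Theta(s)^3 (1+s)^{-3/2 - \alpha}$.

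Applying Lemma \ref{l: nonlinearity integral estimate} with decay exponent $3/2 + \alpha \geq 3 > 1$ then yields the $t^{-3/2}$ decay for the time integral, so multiplication by $(1+t)^{3/2}$ produces a uniform constant. The extra factor of $\Theta(s)$ beyond $\Theta(s)^2$ is absorbed via the smallness of $\Theta$ implicit in the bootstrap; alternatively, one may use the hypothesis directly, bounding $|p| \leq \tfrac{1}{2} \omega q_*$ pointwise, which reduces the estimates verbatim to those for $\mathcal{I}_{p, 2}$ and $\mathcal{I}_{p, 3}$ and yields $C\Theta(t)^2$ without the absorption step.

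The main obstacle I anticipate is the cross term $p (\omega^{-1} q_*^{-1})_x (\omega^{-1} q_*^{-1}) \psi \psi_x$, which is new relative to the previous lemmas and requires combining $\psi$ and $\psi_x$ bounds with different decay rates. Using the front asymptotics, the spatial coefficient of this term is bounded by $C \omega_{\eta, 0} \rho_{0, -3}$ for some small $\eta > 0$, and I would handle it by placing $\psi$ in an exponentially weighted $L^\infty$ (so the algebraic growth permitted by $L^\infty_{-1, 0}$ on the left is absorbed by $\omega_{\eta, 0}$) against $\psi_x$ in $L^1$, producing a bound of order $\Theta(s)^3 (1+s)^{-2} s^{-1/2}$, which remains integrable against $(t-s)^{-3/2}$ by the splitting argument of Lemma \ref{l: nonlinearity integral estimate}.
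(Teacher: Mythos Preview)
Your proposal is correct and follows the same approach the paper takes: the paper's entire argument is the one-sentence remark preceding the lemma, namely that $\mathcal{I}_{p,4}$ differs from the combination of $\mathcal{I}_{p,2}$ and $\mathcal{I}_{p,3}$ only in that the coefficient $\omega q_*$ is replaced by $p(s)$, which has the same spatial localization but strictly better temporal decay. Your expansion of $[(\omega^{-1}q_*^{-1}\psi)_x]^2$ into $\psi^2$, $\psi_x^2$, and cross pieces, followed by the observation that the first two reduce to the $\mathcal{I}_{p,3}$ and $\mathcal{I}_{p,2}$ estimates, is exactly this remark made explicit.

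Two minor comments. First, your main route naturally yields a $\Theta(t)^3$ bound rather than $\Theta(t)^2$; this is also what the paper's ``extra temporal decay'' argument produces, and in the context of Proposition~\ref{p: theta control} and the bootstrap in the proof of Theorem~\ref{t: main} (where $\Theta(t) < \tfrac{1}{2}$) the distinction is immaterial. Your alternative of invoking $|p| \leq \tfrac{1}{2}\omega q_*$ pointwise is the cleanest way to land precisely on $\Theta(t)^2$ as stated. Second, you are right to single out the cross term $p\,(\omega^{-1}q_*^{-1})_x(\omega^{-1}q_*^{-1})\psi\psi_x$, and your treatment of it is sound; in fact one can sharpen your bound slightly by using $\|\psi\|_{L^\infty_{-1,0}} \leq \Theta(s)/(1+s)$ on the right (since $\rho_{-1,0} \equiv 1$ there) rather than $\|\psi\|_{L^\infty}$, but the decay you obtain already suffices for Lemma~\ref{l: nonlinearity integral estimate}.
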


\begin{prop}[$W^{1, \infty}_{0, -1}$ estimates on $p(t)$]\label{p: W1 inf 0 -1 estimates on p}
	There exist positive constants $C_1$ and $C_2$ such that 
	\begin{align}
		(1+t)^{3/2} \| p(t) \|_{W^{1,\infty}_{0,-1}} \leq C_1 \left( \| p_0 \|_{L^1_{0,1}} + \| p_0 \|_{W^{1,\infty}} \right) + C_2 \Theta(t)^2
	\end{align}
	provided $\| \omega^{-1} q_*^{-1} p(s) \|_{L^\infty} \leq \frac{1}{2}$ for all $s \in (0, t)$.
\end{prop}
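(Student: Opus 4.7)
The plan is to bound each term on the right-hand side of the variation of constants formula \eqref{e: voc p} in the $W^{1,\infty}_{0,-1}$ norm and sum. The nonlinear integral contributions $\mathcal{I}_{p,1}(t),\mathcal{I}_{p,2}(t),\mathcal{I}_{p,3}(t),\mathcal{I}_{p,4}(t)$ have already been estimated in Lemmas \ref{l: Ip1 L inf 0 -1}, \ref{l: Ip2 L inf 0 -1 estimates}, \ref{l: Ip3 W 1 inf 0 -1 estimates}, and \ref{l: Ip4 W inf 0 -1}, each giving a bound of the form $(1+t)^{3/2}\|\mathcal{I}_{p,i}(t)\|_{W^{1,\infty}_{0,-1}} \leq C\Theta(t)^2$, valid under the smallness assumption $\|\omega^{-1}q_*^{-1}p(s)\|_{L^\infty} \leq \tfrac{1}{2}$ needed for the Taylor expansion \eqref{e: nonlinearity Taylor expansion} that controls the cubic and higher terms in $\mathcal{I}_{p,1}$ and $\mathcal{I}_{p,4}$. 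Summing these four estimates yields $C_2 \Theta(t)^2$ contribution.

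What remains is the linear contribution $e^{\mcl_p t}p_0$. Here I would split into two time regimes. For $t \geq 1$, Proposition \ref{p: linear p t-3/2 estimate} immediately gives
\begin{align*}
    (1+t)^{3/2}\|e^{\mcl_p t}p_0\|_{W^{1,\infty}_{0,-1}} \leq C(1+t)^{3/2} t^{-3/2} \|p_0\|_{L^1_{0,1}} \leq C\|p_0\|_{L^1_{0,1}}.
\end{align*}
For $0 < t < 1$, the $t^{-3/2}$ singularity from Proposition \ref{p: linear p t-3/2 estimate} is not useful, so instead I would use the small time parabolic regularity estimate \eqref{e: small time W 1 ell estimate} of Lemma \ref{l: small time estimates}: $\|e^{\mcl_p t}p_0\|_{W^{1,\infty}} \leq C\|p_0\|_{W^{1,\infty}}$. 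Since $\omega \rho_{0,-1}$ is bounded above, $\|\cdot\|_{W^{1,\infty}_{0,-1}} \leq C\|\cdot\|_{W^{1,\infty}}$, and the prefactor $(1+t)^{3/2}$ is bounded on $[0,1]$, we obtain $(1+t)^{3/2}\|e^{\mcl_p t}p_0\|_{W^{1,\infty}_{0,-1}} \leq C \|p_0\|_{W^{1,\infty}}$ on this interval. Combining the two regimes yields
\begin{align*}
    (1+t)^{3/2}\|e^{\mcl_p t}p_0\|_{W^{1,\infty}_{0,-1}} \leq C_1 \left( \|p_0\|_{L^1_{0,1}} + \|p_0\|_{W^{1,\infty}} \right)
\end{align*}
uniformly in $t \in (0,T_*)$.

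Adding the linear bound to the four nonlinear bounds produces the asserted inequality with $C_1$ coming from the semigroup estimates and $C_2$ coming from the cumulative constants in Lemmas \ref{l: Ip1 L inf 0 -1}--\ref{l: Ip4 W inf 0 -1}. No step here is particularly delicate; all genuine work has been offloaded into the preceding nonlinearity-by-nonlinearity lemmas, which already exploit the sharp linear decay estimate of Proposition \ref{p: linear p t-3/2 estimate} together with the time-weighted control packaged into $\Theta(t)$ via Lemma \ref{l: nonlinearity integral estimate}. The main (minor) subtlety is to keep straight that the $L^1_{0,1}$ norm of the initial data controls large-time behavior via Proposition \ref{p: linear p t-3/2 estimate}, whereas the $W^{1,\infty}$ norm is what one needs for the small-time parabolic regularity bound, so both norms appear explicitly in $C_1$.
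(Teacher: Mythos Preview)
Your proposal is correct and follows essentially the same route as the paper: invoke Lemmas \ref{l: Ip1 L inf 0 -1}--\ref{l: Ip4 W inf 0 -1} for the nonlinear terms, then split the linear piece $e^{\mcl_p t}p_0$ into $0<t<1$ (Lemma \ref{l: small time estimates}) and $t\geq 1$ (Proposition \ref{p: linear p t-3/2 estimate}). One small slip: $\omega\rho_{0,-1}$ is \emph{not} bounded (it grows like $e^x/x$ for large $x$); the inequality $\|\cdot\|_{W^{1,\infty}_{0,-1}}\leq C\|\cdot\|_{W^{1,\infty}}$ you want follows simply because $\rho_{0,-1}$ itself is bounded.
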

\begin{proof}
	Having already handled all the nonlinear terms in Lemmas \ref{l: Ip1 L inf 0 -1} through \ref{l: Ip4 W inf 0 -1}, it only remains to estimate the term $e^{\mcl_p t} p_0$ in the variation of constants formula \eqref{e: voc p}. For $0 < t <1$, we have by Lemma \ref{l: small time estimates}
	\begin{align*}
		(1+t)^{3/2} \| e^{\mcl_p t} p_0 \|_{W^{1,\infty}_{0,-1}} \leq C (1+t)^{3/2} \| p_0 \|_{W^{1,\infty}} \leq C \| p_0 \|_{W^{1, \infty}}. 
	\end{align*}
	For $t > 1$, we instead use Proposition \ref{p: linear p t-3/2 estimate} to estimate 
	\begin{align*}
		(1+t)^{3/2} \| e^{\mcl_p t} p_0 \|_{W^{1,\infty}_{0,-1}} \leq C \frac{(1+t)^{3/2}}{t^{3/2}} \| p_0 \|_{L^1_{0,1}} \leq C \| p_0 \|_{L^1_{0, 1}},
	\end{align*}
	so that for all $t > 0$ we have 
	\begin{align*}
		(1+t)^{3/2} \| p(t) \|_{W^{1,\infty}_{0,-1}} \leq C_1 \left( \| p_0\|_{L^1_{0,1}} + \| p_0 \|_{W^{1,\infty}} \right),
	\end{align*}
	as desired. 
\end{proof}

We now establish control of $p(t)$ in $W^{1,1}$, which is in turn used in estimating several terms in the nonlinearity. 

\begin{prop}[$W^{1,1}$ estimates on $p(t)$]\label{p: nonlin W11 estimates on p}
	There exist positive constants $C_1$ and $C_2$ such that for all $t \in (0, T_*)$, we have 
	\begin{align}
		(1+t)^{1/2} \| p(t) \|_{W^{1,1}} \leq C_1 \left( \| p_0 \|_{L^1_{0,1}} + \|\partial_x p_0 \|_{L^1} \right)  + C_2 \Theta(t)^2
	\end{align}
	provided $\| \omega^{-1} q_*^{-1} p(s) \|_{L^\infty} \leq \frac{1}{2}$ for all $s \in (0, t)$.
\end{prop}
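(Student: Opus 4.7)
The plan is to follow the template of Proposition 4.4: apply the variation of constants formula \eqref{e: voc p}, bound the homogeneous term $e^{\mcl_p t}p_0$ using the linear theory from Section 4, and bound each nonlinear contribution $\mathcal{I}_{p,i}(t)$ by combining linear decay estimates for $e^{\mcl_p(t-s)}$ with the pointwise-in-time density bounds already derived in Lemmas 5.4--5.7 together with Lemma 5.2. The key change is that the target norm is $W^{1,1}$ rather than $W^{1,\infty}_{0,-1}$, so the relevant linear decay rate from $L^1_{0,1}$ data is $(t-s)^{-1/2}$ rather than $(t-s)^{-3/2}$, and correspondingly the hypothesis $\beta > \alpha = 1/2$ in Lemma 5.2 is easier to verify.

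For the homogeneous term, \eqref{e: small time W 1 ell estimate} of Lemma 4.5 handles $0 < t \leq 1$, which is where $\|\partial_x p_0\|_{L^1}$ enters the estimate. For $t > 1$, the same contour argument that establishes Proposition 4.3 applied to the full $W^{1,1}$ resolvent bound of Proposition 3.7 (of which the $L^1$ statement in Proposition 4.3 is a corollary) yields $\|e^{\mcl_p t}p_0\|_{W^{1,1}} \leq Ct^{-1/2}\|p_0\|_{L^1_{0,1}}$, producing the $C_1$ contribution.

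For each nonlinear integral one splits $\int_0^t = \int_0^{t-1} + \int_{t-1}^t$ when $t > 1$ (and uses parabolic regularity directly when $t \leq 1$). On the long interval, the density bounds
\begin{align*}
	\|3q_*\omega^{-1}p(s)^2 + \omega^{-2}p(s)^3\|_{L^1_{0,1}} &\leq C\Theta(s)^2 (1+s)^{-2},\\
	\|\omega^{-1}q_*^{-1}\psi_x(s)^2\|_{L^1_{0,1}} &\leq C\Theta(s)^2 s^{-1/2}(1+s)^{-1},\\
	\|\omega q_*[(\omega^{-1}q_*^{-1})_x]^2\psi(s)^2\|_{L^1_{0,1}} &\leq C\Theta(s)^2 (1+s)^{-2},\\
	\|p(s)[(\omega^{-1}q_*^{-1}\psi(s))_x]^2\|_{L^1_{0,1}} &\leq C\Theta(s)^3(1+s)^{-7/2}
\end{align*}
that are already visible inside Lemmas 5.4--5.7 (using the exponential localization of $(\omega^{-1}q_*^{-1})_x$ on both sides and the smallness hypothesis $\|\omega^{-1}q_*^{-1}p\|_{L^\infty} \leq 1/2$ in the last two lines), combined with the linear estimate $\|e^{\mcl_p(t-s)}f\|_{W^{1,1}} \leq C(t-s)^{-1/2}\|f\|_{L^1_{0,1}}$ and Lemma 5.2 with $\alpha = 1/2$ and $\beta \in \{3/2, 2, 7/2\}$, produce $(1+t)^{1/2}\|\int_0^{t-1}e^{\mcl_p(t-s)}N_i(s)\,ds\|_{W^{1,1}} \leq C\Theta(t)^2$. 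The integrable $s^{-1/2}$ singularity in the $\mathcal{I}_{p,2}$ bound is harmless, and the $\Theta(t)^3$ in the $\mathcal{I}_{p,4}$ bound can be written as $\Theta(t)\cdot \Theta(t)^2$ with $\Theta(t)$ absorbed into an updated $C_2$ under the running smallness assumption. On the short interval $[t-1,t]$ one uses the parabolic smoothing $\|\partial_x e^{\mcl_p\tau}f\|_{L^1} \leq C\tau^{-1/2}\|f\|_{L^1}$ for $\tau \in (0,1]$, a standard consequence of the analytic semigroup estimates in Lemma 4.5; this lets one get by with the $L^1$ norm of the nonlinear density and thereby avoid any need for $\psi_{xx}$ control in the regime $s > 1$ where it is not part of $\Theta$.

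The main point requiring care is verifying that the derivative $\partial_x p$ is controlled in $L^1$ at the linear level, since Proposition 4.3 is stated only for $L^1$ even though its proof rests on the genuinely $W^{1,1}$ resolvent bound of Proposition 3.7. Once this observation is in place, the rest of the argument is a direct transcription of the bookkeeping in Lemmas 5.4--5.7 with $\alpha = 1/2$ in Lemma 5.2, giving the inequality with explicit constants $C_1, C_2$ after summing the five contributions.
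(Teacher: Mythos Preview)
Your proposal is correct and follows essentially the same approach as the paper, which states the proof in one sentence: repeat the argument of Proposition~\ref{p: W1 inf 0 -1 estimates on p} with Proposition~\ref{p: p L11 to L1 estimate} in place of Proposition~\ref{p: linear p t-3/2 estimate}. Your observation that Proposition~\ref{p: p resolvent L11 to L1 estimate} already gives the full $W^{1,1}$ resolvent bound (so that the contour argument behind Proposition~\ref{p: p L11 to L1 estimate} yields $\|e^{\mcl_p t}\|_{L^1_{0,1}\to W^{1,1}}\leq Ct^{-1/2}$) is exactly the point, and your treatment of the nonlinear densities matches Lemmas~\ref{l: Ip1 L inf 0 -1}--\ref{l: Ip4 W inf 0 -1}. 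One small remark: for $\mathcal{I}_{p,4}$ the cleanest route to a genuine $\Theta(t)^2$ bound (rather than $\Theta(t)^3$) is to use the hypothesis $\|\omega^{-1}q_*^{-1}p(s)\|_{L^\infty}\leq\tfrac12$ to write $|p(s)|\leq\tfrac12|\omega q_*|$ pointwise, reducing directly to the $\mathcal{I}_{p,2}$/$\mathcal{I}_{p,3}$ integrands; your absorption of the extra $\Theta(t)$ into $C_2$ is not quite licensed by the stated hypothesis alone, though it makes no difference for the eventual bootstrap.
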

\begin{proof}
	The proof is exactly the same as that of Proposition \ref{p: W1 inf 0 -1 estimates on p}, but with Proposition \ref{p: p L11 to L1 estimate} replacing the linear estimate Proposition \ref{p: linear p t-3/2 estimate}. 
\end{proof}

\subsection{Estimates on $\psi(t)$}

We first prove estimates on $\| \psi(t) \|_{L^\infty_{-1,0}}$, since in light of Proposition \ref{p: psi L1 01 Linf-10 estimate}, these require measuring the nonlinearities in $L^1_{0,1}$, and so these estimates are strictly harder than estimates on $\| \psi(t) \|_{L^\infty}$, which only require measuring the nonlinearities in the weaker $L^1$ norm. 

\begin{lemma}[$L^\infty_{0, -1}$ estimates on $\mathcal{I}_{\psi, 1} (t)$]
	There exists a constant $C > 0$ such that for all $t \in (0, T_*)$, we have 
	\begin{align}
		(1+t) \| \mathcal{I}_{\psi, 1} (t) \|_{L^\infty_{-1, 0}} \leq C \Theta(t)^2
	\end{align}
	provided $\| \omega^{-1} q_*^{-1} p(s) \|_{L^\infty} \leq \frac{1}{2}$ for all $s \in (0, t)$. 
\end{lemma}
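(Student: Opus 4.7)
The decay target $(1+t)^{-1}$ matches exactly the rate in Proposition \ref{p: psi L1 01 Linf-10 estimate}, so the plan is to estimate $\mathcal{I}_{\psi,1}(t)$ via Duhamel using that linear estimate for the large-time piece and the standard parabolic regularity bound \eqref{e: small time L1 L inf estimate} from Lemma \ref{l: small time estimates} for the small-time piece, then paste them together using Lemma \ref{l: nonlinearity integral estimate} in the borderline case $\alpha = 1$, $\beta = 3/2$.

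Write the integrand as $N_1(s)$. First I would bound it pointwise: the front asymptotics \eqref{e: front asymptotics} imply that $q_*'/q_*$ is uniformly bounded (tending to $-1$ as $x \to +\infty$ and decaying exponentially as $x \to -\infty$), and that $|\omega^{-1} q_*^{-1}| \leq C \rho_{0,-1}$; combined with the Taylor estimate \eqref{e: nonlinearity Taylor expansion}, which applies under the standing hypothesis $\|\omega^{-1} q_*^{-1} p(s)\|_{L^\infty} \leq 1/2$, this gives
\begin{align*}
	|N_1(s)| \leq C \rho_{0,-1} |p(s)| |\psi_x(s)|.
\end{align*}
Since $\rho_{0,1}\rho_{0,-1}$ is uniformly bounded on $\R$, Hölder's inequality yields
\begin{align*}
	\|N_1(s)\|_{L^1_{0,1}} \leq C \|p(s)\|_{L^1} \|\psi_x(s)\|_{L^\infty} \leq \frac{C \Theta(s)^2}{(1+s)^{3/2}},
\end{align*}
invoking the terms $(1+s)^{1/2}\|p(s)\|_{W^{1,1}}$ and $(1+s)\|\psi_x(s)\|_{L^\infty}$ of $\Theta(s)$ in \eqref{e: theta def}. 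Note also $\|N_1(s)\|_{L^1} \leq C\|N_1(s)\|_{L^1_{0,1}}$ since $\rho_{0,1} \geq c > 0$.

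For $t > 1$, I would split $\mathcal{I}_{\psi,1}(t) = \int_0^{t-1}(\cdots)\,ds + \int_{t-1}^{t}(\cdots)\,ds$. On $[0,t-1]$, Proposition \ref{p: psi L1 01 Linf-10 estimate} followed by Lemma \ref{l: nonlinearity integral estimate} with $\alpha=1$, $\beta=3/2$ gives the $L^\infty_{-1,0}$-bound $C\Theta(t)^2/t$. On $[t-1,t]$, using the embedding $\|\cdot\|_{L^\infty_{-1,0}} \leq C\|\cdot\|_{L^\infty}$ together with \eqref{e: small time L1 L inf estimate} and the fact that $(1+s) \sim (1+t)$ on this interval, one obtains a bound of order $\Theta(t)^2 (1+t)^{-3/2}\int_{t-1}^t (t-s)^{-1/2}\,ds \leq C \Theta(t)^2 (1+t)^{-3/2}$. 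For $0 < t \leq 1$ the same small-time estimate handles the full interval $[0,t]$ at once since $\Theta(s)^2/(1+s)^{3/2}$ is bounded there. Multiplying through by $(1+t)$ yields the claim in each regime.

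The main obstacle is the borderline nature of the exponent $\alpha = 1$ in Lemma \ref{l: nonlinearity integral estimate}: the linear estimate barely produces an integrable singularity against the nonlinear decay rate, and the lemma's hypothesis requires $\beta > 1$ strictly. Thus it is essential that $\|N_1(s)\|_{L^1_{0,1}}$ decay \emph{faster} than $(1+s)^{-1}$, which is achieved only by combining the $(1+s)^{-1/2}$ decay of $p$ in $L^1$ with the full $(1+s)^{-1}$ decay of $\psi_x$ in $L^\infty$. Losing either factor of decay would cause logarithmic divergence; this is precisely the reason both of these quantities are tracked explicitly in the time-weighted norm $\Theta(t)$.
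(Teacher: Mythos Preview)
Your proposal is correct and follows essentially the same approach as the paper: split the Duhamel integral at $t-1$, use Proposition \ref{p: psi L1 01 Linf-10 estimate} on $[0,t-1]$ together with the nonlinearity bound $\|N_1(s)\|_{L^1_{0,1}} \leq C\|p(s)\|_{L^1}\|\psi_x(s)\|_{L^\infty} \leq C\Theta(s)^2(1+s)^{-3/2}$ derived from \eqref{e: nonlinearity Taylor expansion} and the front asymptotics, apply Lemma \ref{l: nonlinearity integral estimate} with $\alpha=1$, $\beta=3/2$, and handle the remaining interval with Lemma \ref{l: small time estimates}. Your closing observation about the borderline exponent $\alpha=1$ correctly identifies why both the $L^1$ decay of $p$ and the $L^\infty$ decay of $\psi_x$ must be tracked in $\Theta$.
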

\begin{proof}
	We again split the integral in $\mathcal{I}_{\psi, 1} (t)$ as 
	\begin{align*}
		\mathcal{I}_{\psi, 1} (t) = \left( \int_0^{t-1} + \int_{t-1}^t \right) e^{\mcl_\psi (t-s)} \left[ 2 \frac{q_*'}{q_*} \left( \frac{1}{1+ \omega^{-1} q_*^{-1} p} - 1 \right) \psi_x (s) \right] \, ds.
	\end{align*}
	Throughout this section, we will focus on estimating the integral from $0$ to $t-1$. The estimates on the other integral are similar, but we replace Proposition \ref{p: psi L1 01 Linf-10 estimate} with Lemma \ref{l: small time estimates} to guarantee integrability near $t = s$.  Of course if $t < 1$, then we only write one integral from $0$ to $t$, and use the small time estimates of Lemma \ref{l: small time estimates} in this single integral. Hence for the remainder of this section we will assume $t > 1$. 
	Using Proposition \ref{p: psi L1 01 Linf-10 estimate}, we estimate 
	\begin{multline*}
		(1+t) \left\| \int_0^{t-1} e^{\mcl_\psi (t-s)} \left[ 2 \frac{q_*'}{q_*} \left( \frac{1}{1+ \omega^{-1} q_*^{-1} p} - 1 \right) \psi_x (s) \right] \, ds \right\|_{L^\infty_{-1,0}} \\ \leq C (1+t) \int_0^{t-1} \frac{1}{(t-s)} \left\| \frac{q_*'}{q_*} \left( \frac{1}{1 + \omega^{-1} q_*^{-1} p} - 1 \right) \psi_x(s) \right\|_{L^1_{0,1}} \, ds
	\end{multline*}
	Taylor expanding the nonlinearity and using that $|\omega^{-1} q_*^{-1}| \leq C \rho_{0, -1}$ and $q_*'/q_*$ is bounded, we have 
	\begin{align*}
		\left\| \frac{q_*'}{q_*} \left( \frac{1}{1 + \omega^{-1} q_*^{-1} p} - 1 \right) \psi_x(s) \right\|_{L^1_{0,1}}
		&\leq C \left\| \omega^{-1} q_*^{-1} p(s) \psi_x (s) \right\|_{L^1_{0,1}} \\
		&\leq C \| p(s) \|_{L^1} \| \psi_x(s) \|_{L^\infty}\\
		&\leq C \Theta(s)^2 \frac{1}{(1+s)^{1/2}} \frac{1}{(1+s)}. 
	\end{align*}
	Estimating the integral from $t-1$ to $t$ similarly but using the small time estimates from Lemma \ref{l: small time estimates}, we obtain 
	\begin{align*}
		(1+t)\| \mathcal{I}_{\psi, 1} (t) \|_{L^\infty_{0, -1}} &\leq C (1+t)  \Theta(t)^2 \left(\int_0^{t-1}  \frac{1}{t-s} \frac{1}{(1+s)^{3/2}} \, ds + \int_{t-1}^t \frac{1}{(t-s)^{1/2}} \frac{1}{(1+s)^{3/2}} \, ds \right) \\
		&\leq C  \Theta(t)^2
	\end{align*}
	by Lemma \ref{l: nonlinearity integral estimate} and an analogous argument for the second integral, provided $\| \omega^{-1} q_*^{-1} p(s) \|_{L^\infty} \leq \frac{1}{2}$ for all $s \in (0, t)$, as desired. 
\end{proof}

\begin{lemma}[$L^\infty_{0, -1}$ estimates on $\mathcal{I}_{\psi, 2} (t)$] 
	There exists a constant $C > 0$ such that for all $t \in (0, T_*)$, we have 
	\begin{align}
		(1+t) \| \mathcal{I}_{\psi, 2} (t) \|_{L^\infty_{-1, 0}} \leq C \Theta(t)^2
	\end{align}
	provided $\| \omega^{-1} q_*^{-1} p(s) \|_{L^\infty} \leq \frac{1}{2}$ for all $s \in (0, t)$. 
\end{lemma}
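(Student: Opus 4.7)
The plan is to follow the structure of the preceding lemma essentially verbatim, with the main work being to identify which combination of norms in $\Theta(s)$ produces a bound on the nonlinearity in $L^1_{0,1}$ that decays fast enough to close the estimate. As before, I would split $\mathcal{I}_{\psi, 2}(t)$ as $\int_0^{t-1} + \int_{t-1}^t$ for $t > 1$, estimate the first integral using Proposition \ref{p: psi L1 01 Linf-10 estimate} (which provides the $(t-s)^{-1}$ decay needed to give $t^{-1}$ after integrating against a bound of order $(1+s)^{-3/2}$), and bound the near-diagonal piece via the small-time parabolic regularity estimate \eqref{e: small time L1 L inf estimate} from Lemma \ref{l: small time estimates}, since $\| \cdot \|_{L^\infty_{-1,0}} \leq \| \cdot \|_{L^\infty}$.

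The heart of the argument is the $L^1_{0,1}$ estimate on the nonlinearity. The Taylor bound \eqref{e: nonlinearity Taylor expansion} gives
\begin{align*}
\left| \omega q_*' \left( \tfrac{1}{1+\omega^{-1} q_*^{-1} p} - 1 \right) (\omega^{-1} q_*^{-1})_x \psi(s) \right| \leq C \left| \tfrac{q_*'}{q_*} (\omega^{-1} q_*^{-1})_x p(s) \psi(s) \right|,
\end{align*}
having used $\omega q_*' \cdot \omega^{-1} q_*^{-1} = q_*'/q_*$, which is bounded on $\R$. From the front asymptotics \eqref{e: front asymptotics} one computes $|(\omega^{-1} q_*^{-1})_x| \leq C \omega_{\eta, 0} \rho_{0, -2}$ for a small $\eta > 0$ (exponential decay on the left, $\langle x \rangle^{-2}$ decay on the right, using $\omega^{-1} q_*^{-1} \sim 1/(a+bx)$ at $+\infty$). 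Multiplying by the weight $\rho_{0,1}$ in the definition of $L^1_{0,1}$ therefore gives
\begin{align*}
\| \omega q_* (\ldots) (\omega^{-1} q_*^{-1})_x \psi(s) \|_{L^1_{0,1}} \leq C \| \omega_{\eta, 0} \rho_{0,-1} p(s) \psi(s) \|_{L^1}.
\end{align*}

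To estimate the remaining quantity I would apply Hölder as
\begin{align*}
\| \omega_{\eta, 0} \rho_{0,-1} p(s) \psi(s) \|_{L^1} \leq \| \omega_{\eta, 0} \rho_{0,-1} \psi(s) \|_{L^\infty} \, \| p(s) \|_{L^1} \leq C \| \psi(s) \|_{L^\infty_{-1,0}} \| p(s) \|_{L^1},
\end{align*}
where the first factor uses the obvious bound $\omega_{\eta, 0} \rho_{0,-1} \leq C \rho_{-1, -1}^{-1}$ (exponential decay on the left absorbs the linear growth of $\psi$ allowed by $L^\infty_{-1,0}$, and on the right $\rho_{0,-1}$ decays while $\psi$ is bounded). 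By the definition of $\Theta$, this yields $\| \text{nonlinearity} \|_{L^1_{0,1}} \leq C \Theta(s)^2 (1+s)^{-3/2}$. Then Lemma \ref{l: nonlinearity integral estimate} with $\alpha = 1$ and $\beta = 3/2$ gives
\begin{align*}
(1+t) \left\| \int_0^{t-1} e^{\mcl_\psi(t-s)} [\ldots] \, ds \right\|_{L^\infty_{-1,0}} \leq C (1+t) \Theta(t)^2 \int_0^{t-1} \frac{1}{(t-s)(1+s)^{3/2}} \, ds \leq C \Theta(t)^2,
\end{align*}
and the near-diagonal piece is handled analogously.

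The only non-routine point is recognizing that the raw coefficient $\omega q_*'(\omega^{-1} q_*^{-1})_x$ actually grows linearly at $+\infty$ (since $\omega q_*' \sim -bx$ while $(\omega^{-1} q_*^{-1})_x \sim -1/(bx^2)$), so one cannot absorb it into the weight before Taylor expanding; it is the extra factor of $\omega^{-1} q_*^{-1} p$ produced by the expansion that tames this growth and yields the $\rho_{0,-2}$ localization needed for the $L^1_{0,1}$ estimate. After that observation, the matching of weights is straightforward.
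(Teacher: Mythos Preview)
Your proof is correct and follows essentially the same route as the paper: split the integral, apply Proposition~\ref{p: psi L1 01 Linf-10 estimate} to the far piece, Taylor expand via \eqref{e: nonlinearity Taylor expansion}, exploit the localization of $(\omega^{-1}q_*^{-1})_x$ from \eqref{e: front asymptotics} to bound the nonlinearity by $C\|p(s)\|_{L^1}\|\psi(s)\|_{L^\infty_{-1,0}} \leq C(1+s)^{-3/2}\Theta(s)^2$, and close with Lemma~\ref{l: nonlinearity integral estimate}. One small correction to your closing remark: the product $\omega q_*'\,(\omega^{-1}q_*^{-1})_x$ does \emph{not} grow linearly at $+\infty$ --- your own asymptotics give $(-bx)\cdot(-1/(bx^2)) \sim 1/x$ --- so the order in which you expand versus bound the coefficient is immaterial here; this does not affect the argument.
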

\begin{proof}
	We again focus on the integral from $0$ to $t-1$ in the definition of $\mathcal{I}_{\psi, 2}(t)$. By Proposition \ref{p: psi L1 01 Linf-10 estimate}, we have 
	\begin{multline*}
		(1+t) \left\| \int_0^{t-1} e^{\mcl_\psi (t-s)} \left[ 2 \omega q_*' \left( \frac{1}{1+ \omega^{-1} q_*^{-1} p} - 1 \right) (\omega^{-1} q_*^{-1})_x \psi(s) \right] \, ds \right\|_{L^\infty_{-1, 0}} \\ \leq C (1+t) \int_0^{t-1} \frac{1}{(t-s)} \left\| \omega q_*' \left( \frac{1}{1+\omega^{-1} q_*^{-1} p} -1 \right) (\omega^{-1} q_*^{-1})_x \psi(s) \right\|_{L^1_{0, 1}} \, ds. 
	\end{multline*}
	Taylor expanding the nonlinearity and exploiting the localization of $(\omega^{-1} q_*^{-1})_x$ implied by \eqref{e: front asymptotics}, we obtain 
	\begin{align*}
		\left\| \omega q_*' \left( \frac{1}{1+\omega^{-1} q_*^{-1} p} -1 \right) (\omega^{-1} q_*^{-1})_x \psi(s) \right\|_{L^1_{0, 1}} &\leq C  \| p(s) \|_{L^1} \| \psi(s) \|_{L^\infty_{-1, 0}} \\
		&\leq C \Theta(s)^2 \frac{1}{(1+s)^{1/2}} \frac{1}{(1+s)}. 
	\end{align*}
	Estimating the integral from $t-1$ to $t$ similarly, we obtain  
	\begin{align*}
		(1+t) \| \mathcal{I}_{\psi, 2} (t) \|_{L^\infty_{-1, 0}} &\leq C \Theta(t)^2 \left( \int_0^{t-1} \frac{1}{(t-s)} \frac{1}{(1+s)^{3/2}} \, ds + \int_{t-1}^t \frac{1}{(t-s)^{1/2}} \frac{1}{(1+s)^{3/2}} \, ds \right) \\ &\leq C  \Theta(t)^2
	\end{align*}
	by Lemma \ref{l: nonlinearity integral estimate}, provided $\| \omega^{-1} q_*^{-1} p(s) \|_{L^\infty} \leq \frac{1}{2}$ for all $s \in (0, t)$, as desired. 
\end{proof}

\begin{lemma}[$L^\infty_{0, -1}$ estimates on $\mathcal{I}_{\psi, 3} (t)$]
	There exists a constant $C > 0$ such that for all $t \in (0, T_*)$, we have
	\begin{align}
		(1+t) \| \mathcal{I}_{\psi, 3} (t) \|_{L^\infty_{-1, 0}} \leq C  \Theta(t)^2
	\end{align}
	provided $\| \omega^{-1} q_*^{-1} p(s) \|_{L^\infty} \leq \frac{1}{2}$ for all $s \in (0,t)$. 
\end{lemma}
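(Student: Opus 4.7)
The plan is to proceed in parallel with the proofs of the $L^\infty_{-1,0}$ estimates for $\mathcal{I}_{\psi,1}$ and $\mathcal{I}_{\psi,2}$, splitting the integral at $s = t-1$. On $[0,t-1]$, I will invoke Proposition \ref{p: psi L1 01 Linf-10 estimate} to extract $(t-s)^{-1}$ decay from the semigroup, at the cost of measuring the nonlinearity in $L^1_{0,1}$. On $[t-1,t]$, I will use the $L^1 \to L^\infty$ small-time regularity estimate in Lemma \ref{l: small time estimates} to absorb the singularity at $s = t$, and the final time integration will be carried out via Lemma \ref{l: nonlinearity integral estimate} with $\alpha = 1, \beta = 3/2$. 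The case $t \in (0,1]$ is handled by the small-time bound alone, as in the preceding two lemmas.

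The main work is bounding the nonlinearity in $L^1_{0,1}$. I will first use \eqref{e: nonlinearity Taylor expansion} to replace the factor $(1 + \omega^{-1} q_*^{-1} p)^{-1}$ by a uniform constant, and then expand $\omega(\omega^{-1} p)_x = p_x - (\omega'/\omega) p$ with $\omega'/\omega$ smooth and bounded, together with $(\omega^{-1} q_*^{-1} \psi)_x = (\omega^{-1} q_*^{-1})_x \psi + \omega^{-1} q_*^{-1} \psi_x$, to obtain four scalar products to estimate. The front asymptotics \eqref{e: front asymptotics} yield the bounds $|\omega^{-1} q_*^{-1}| \leq C \rho_{0,-1}$ and $|(\omega^{-1} q_*^{-1})_x| \leq C \omega_{\eta,0}^{-1} \rho_{0,-2}$ for some small $\eta > 0$ (exponentially localized on the left, algebraically decaying like $x^{-2}$ on the right).

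For the two terms containing $\psi_x$, the product $\rho_{0,1} \cdot |\omega^{-1} q_*^{-1}|$ is uniformly bounded on $\R$, so these contributions are controlled by $(\|p\|_{L^1} + \|p_x\|_{L^1})\|\psi_x\|_{L^\infty}$, which the definition of $\Theta$ bounds by $C(1+s)^{-3/2}\Theta(s)^2$. For the two terms containing $\psi$ with no derivative, the critical observation is that the extra algebraic/exponential localization of $(\omega^{-1} q_*^{-1})_x$ defeats both the weight $\rho_{0,1}$ of the target norm and the linear growth of $\psi$ allowed on the left by $L^\infty_{-1,0}$: on $\{x \geq 1\}$ one has $\rho_{0,1}(x)|(\omega^{-1} q_*^{-1})_x(x)| |\psi(x)| \leq Cx^{-1}\|\psi\|_{L^\infty_{-1,0}}$, while on $\{x \leq -1\}$ the factor $e^{\eta x}|x|$ remains bounded. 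Pairing with $p_x$ or $p$ and integrating, these terms are likewise controlled by $(\|p\|_{L^1} + \|p_x\|_{L^1})\|\psi\|_{L^\infty_{-1,0}} \leq C(1+s)^{-3/2}\Theta(s)^2$.

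Combining the four bounds, the full nonlinearity satisfies $\|\cdot\|_{L^1_{0,1}} \leq C(1+s)^{-3/2}\Theta(s)^2$, and the same bound holds in $L^1$ for the small-time piece. Lemma \ref{l: nonlinearity integral estimate} then produces
\[
(1+t)\int_0^{t-1} \frac{1}{t-s}\cdot\frac{\Theta(s)^2}{(1+s)^{3/2}}\, ds + (1+t)\int_{t-1}^t \frac{1}{(t-s)^{1/2}}\cdot\frac{\Theta(s)^2}{(1+s)^{3/2}}\, ds \leq C\Theta(t)^2,
\]
which is the claimed bound. The main obstacle is the weighted nonlinearity estimate for the $\psi$-terms: one cannot simply separate $\|(\omega^{-1} q_*^{-1})_x \psi\|_{L^1_{0,1}}$ from the $p$-factors, because $\rho_{0,1}|(\omega^{-1} q_*^{-1})_x|\rho_{-1,0}^{-1} \sim x^{-1}$ on the right is not integrable by itself; the remaining $L^1$ factor from $p$ or $p_x$ is precisely what makes the estimate close, which is why the control of $\|p(s)\|_{W^{1,1}}$ encoded in $\Theta$ is indispensable here.
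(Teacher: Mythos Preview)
Your proof is correct and follows essentially the same approach as the paper: split at $s=t-1$, apply Proposition~\ref{p: psi L1 01 Linf-10 estimate} on $[0,t-1]$ and Lemma~\ref{l: small time estimates} on $[t-1,t]$, bound the nonlinearity in $L^1_{0,1}$ by $C\|p(s)\|_{W^{1,1}}\bigl(\|\psi_x(s)\|_{L^\infty}+\|\psi(s)\|_{L^\infty_{-1,0}}\bigr)\leq C(1+s)^{-3/2}\Theta(s)^2$, and close with Lemma~\ref{l: nonlinearity integral estimate}. One small notational slip: the localization bound for $(\omega^{-1}q_*^{-1})_x$ on the left should read $\omega_{\eta,0}$ rather than $\omega_{\eta,0}^{-1}$---your subsequent use of ``$e^{\eta x}|x|$ bounded'' shows you intended the decaying factor, so the argument itself is sound.
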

\begin{proof}
	Again, we focus on the integral from $0$ to $t-1$ in the definition of $\mathcal{I}_{\psi, 3} (t)$. By Proposition \ref{p: psi L1 01 Linf-10 estimate}, we have 
	\begin{multline*}
		(1+t) \left\| \int_0^{t-1} e^{\mcl_\psi(t-s)} \left[ 2 \omega (\omega^{-1} p)_x \left( \frac{1}{1+ \omega^{-1} q_*^{-1}} \right) (\omega^{-1} q_*^{-1} \psi(s))_x \right] \, ds \right\|_{L^\infty_{-1, 0}} \\ \leq C (1+t) \int_0^{t-1} \frac{1}{(t-s)} \left\| \omega (\omega^{-1} p )_x \left( \frac{1}{1+ \omega^{-1} q_*^{-1} p} \right) (\omega^{-1} q_*^{-1} \psi(s))_x \right\|_{L^1_{0, 1}} \, ds. 
	\end{multline*}
	We estimate the nonlinearity as
	\begin{align*}
		\left\| \omega (\omega^{-1} p )_x \left( \frac{1}{1+ \omega^{-1} q_*^{-1} p} \right) (\omega^{-1} q_*^{-1} \psi(s))_x \right\|_{L^1_{0, 1}} &\leq C  \|p(s) \|_{W^{1,1}} \left( \| \psi_x(s) \|_{L^\infty} + \| \psi(s) \|_{L^\infty_{-1, 0}} \right) \\
		&\leq C  \Theta(s)^2 \frac{1}{(1+s)^{1/2}} \left( \frac{1}{1+s} + \frac{1}{1+s} \right) \\
		&\leq C  \frac{1}{(1+s)^{3/2}} \Theta(s)^2,
	\end{align*}
	provided $\| \omega^{-1} q_*^{-1} p(s) \|_{L^\infty} \leq \frac{1}{2}$ for all $s \in (0, t)$. Hence we obtain 
	\begin{align*}
		(1+t) \| \mathcal{I}_{\psi, 3} (t) \|_{L^\infty_{-1, 0}} &\leq C  \Theta(t)^2 (1+t) \left( \int_0^{t-1} \frac{1}{t-s} \frac{1}{(1+s)^{3/2}} \, ds + \int_{t-1}^t \frac{1}{(t-s)^{1/2}} \frac{1}{(1+s)^{3/2}} \, ds \right) \\ 
		&\leq C  \Theta(t)^2
	\end{align*}
	by Lemma \ref{l: nonlinearity integral estimate}, as desired. 
\end{proof}

Combining the preceding three lemmas with the small time estimates from Section \ref{s: linear estimates} as in the proof of Proposition \ref{p: W1 inf 0 -1 estimates on p}, we obtain the following control of $\| \psi(t) \|_{L^\infty_{-1, 0}}$. 

\begin{prop}[$L^\infty_{0, -1}$ estimates on $\psi(t)$]\label{p: nonlin psi L inf 0 -1 estimates}
	There exist positive constants $C_1$ and $C_2$ such that for all $t \in (0, T_*)$, we have 
	\begin{align}
		(1+t) \| \psi(t) \|_{L^\infty_{-1, 0}} \leq C_1 \left( \| \psi_0 \|_{L^1_{0,1}} + \|\psi_0\|_{L^\infty} \right)+ C_2 \Theta(t)^2
	\end{align}
	provided $\| \omega^{-1} q_*^{-1} p(s) \|_{L^\infty} \leq \frac{1}{2}$ for all $s \in (0, t)$.
\end{prop}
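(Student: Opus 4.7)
The plan is to follow exactly the template established in the proof of Proposition \ref{p: W1 inf 0 -1 estimates on p}. Starting from the variation of constants formula \eqref{e: voc psi},
\begin{align*}
\psi(t) = e^{\mcl_\psi t} \psi_0 + \mathcal{I}_{\psi, 1}(t) + \mathcal{I}_{\psi, 2}(t) + \mathcal{I}_{\psi, 3}(t),
\end{align*}
the three nonlinear terms have just been controlled in the preceding three lemmas, each yielding $(1+t)\|\mathcal{I}_{\psi,i}(t)\|_{L^\infty_{-1,0}} \leq C\Theta(t)^2$ under the standing hypothesis $\|\omega^{-1}q_*^{-1}p(s)\|_{L^\infty} \leq \frac{1}{2}$. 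Summing these three contributions produces the $C_2\Theta(t)^2$ term in the proposition.

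The remaining task is to bound $(1+t)\|e^{\mcl_\psi t}\psi_0\|_{L^\infty_{-1,0}}$, which I would handle by splitting into small and large time regimes. Since the weight $\rho_{-1,0}$ is uniformly bounded on $\R$, one has $\|\cdot\|_{L^\infty_{-1,0}} \leq C\|\cdot\|_{L^\infty}$ throughout. For $0 < t \leq 1$, standard analytic semigroup theory for the sectorial operator $\mcl_\psi$ gives $\|e^{\mcl_\psi t}\psi_0\|_{L^\infty} \leq C\|\psi_0\|_{L^\infty}$ uniformly, so $(1+t)\|e^{\mcl_\psi t}\psi_0\|_{L^\infty_{-1,0}} \leq C\|\psi_0\|_{L^\infty}$ on this range. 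For $t \geq 1$, I would instead invoke Proposition \ref{p: psi L1 01 Linf-10 estimate}, which supplies $\|e^{\mcl_\psi t}\psi_0\|_{L^\infty_{-1,0}} \leq \frac{C}{t}\|\psi_0\|_{L^1_{0,1}}$; multiplying by $(1+t)$ gives uniform control by $\|\psi_0\|_{L^1_{0,1}}$. Combining the two regimes produces the $C_1(\|\psi_0\|_{L^1_{0,1}} + \|\psi_0\|_{L^\infty})$ contribution.

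There is essentially no obstacle at this stage: all the substantive analysis — the far-field/core resolvent decomposition of Section \ref{s: psi resolvent estimates}, the sharp decay estimate of Proposition \ref{p: psi L1 01 Linf-10 estimate} that exploits outward transport to beat the naive $t^{-1/2}$ diffusive rate in the algebraically growing norm $L^\infty_{-1,0}$, and the careful bookkeeping of exponential and algebraic localization in each of the three nonlinear terms — has already been carried out. This proposition is simply the assembly step in which Duhamel's formula transfers linear decay and the nonlinear bounds into the targeted norm on $\psi(t)$.
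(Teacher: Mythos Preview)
Your proposal is correct and follows essentially the same approach as the paper: invoke the preceding three lemmas for the Duhamel terms, then handle the linear piece $e^{\mcl_\psi t}\psi_0$ by splitting into $0<t\leq 1$ (small-time semigroup bound, Lemma \ref{l: small time estimates}) and $t\geq 1$ (Proposition \ref{p: psi L1 01 Linf-10 estimate}). There is nothing to add.
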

\begin{proof}
		Having estimated the nonlinear terms in the preceding three lemmas, it only remains to estimate the term $e^{\mcl_\psi t} \psi_0$ in the variation of constants formula \eqref{e: voc psi}. For $0 < t < 1$, we have by Lemma \ref{l: small time estimates}
	\begin{align*}
		(1+t) \| e^{\mcl_\psi t} \psi_0 \|_{L^\infty_{-1,0}} \leq C (1+t) \| \psi_0 \|_{L^\infty} \leq C \| \psi_0 \|_{L^\infty}. 
	\end{align*}
	For $t > 1$, we instead use Proposition \ref{p: psi L1 01 Linf-10 estimate} to estimate 
	\begin{align*}
		(1+t) \| e^{\mcl_\psi t} \psi_0 \|_{L^\infty_{-1,0}} \leq C \frac{(1+t)}{t} \| \psi_0\|_{L^1_{0,1}} \leq C \| \psi_0 \|_{L^1_{0,1}},
	\end{align*}
	which completes the proof of the proposition. 
\end{proof}

By similar arguments, we obtain the following control of $\psi(t)$ in $L^\infty$. 

\begin{prop}[$L^\infty$ estimates on $\mathcal{\psi}(t)$]\label{p: psi L inf estimates}
	There exist positive constants $C_1$ and $C_2$ such that for all $t \in (0, T_*)$, we have 
	\begin{align}
		(1+t)^{1/2} \| \psi(t)\|_{L^\infty} \leq C_1 \left( \| \psi_0 \|_{L^1} + \| \psi_0 \|_{L^\infty} \right)+ C_2 \Theta(t)^2
	\end{align}
	provided $\| \omega^{-1} q_*^{-1} p(s) \|_{L^\infty} \leq \frac{1}{2}$ for all $s \in (0, t)$. 
\end{prop}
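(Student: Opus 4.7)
The plan is to argue in close parallel with Proposition \ref{p: nonlin psi L inf 0 -1 estimates}, working with the weaker target norm $\|\psi(t)\|_{L^\infty}$ instead of $\|\psi(t)\|_{L^\infty_{-1,0}}$. This relaxes the bound we need on the Duhamel integrand from $L^1_{0,1}$ to the weaker $L^1$ norm, but also means we lose a factor of $(1+t)^{1/2}$ in the target decay. I would apply the variation of constants formula \eqref{e: voc psi} and handle the linear term $e^{\mcl_\psi t}\psi_0$ separately from the three nonlinear contributions $\mathcal{I}_{\psi, i}(t)$.

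For the linear term, I would split into $0 < t < 1$ and $t \geq 1$. On the short time regime, the bound $\|e^{\mcl_\psi t}\psi_0\|_{L^\infty} \leq C\|\psi_0\|_{L^\infty}$ (which follows from standard semigroup theory, as $\mcl_\psi$ is sectorial) yields $(1+t)^{1/2}\|e^{\mcl_\psi t}\psi_0\|_{L^\infty} \leq C\|\psi_0\|_{L^\infty}$. On the long time regime, Proposition \ref{p: psi L1 Linf estimate} gives $\|e^{\mcl_\psi t}\psi_0\|_{L^\infty} \leq Ct^{-1/2}\|\psi_0\|_{L^1}$, so $(1+t)^{1/2}\|e^{\mcl_\psi t}\psi_0\|_{L^\infty} \leq C\|\psi_0\|_{L^1}$.

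For each nonlinear term, I would split the time integral at $t-1$ (with only a single integral for $0 < t < 1$), handling the piece from $t-1$ to $t$ by the small-time estimate \eqref{e: small time L1 L inf estimate} of Lemma \ref{l: small time estimates} and the piece from $0$ to $t-1$ by Proposition \ref{p: psi L1 Linf estimate}. It then suffices to show that each nonlinearity $N_i$ satisfies $\|N_i(s)\|_{L^1} \leq C\Theta(s)^2 (1+s)^{-\beta}$ for some $\beta > 1$, since then Lemma \ref{l: nonlinearity integral estimate} with $\alpha = 1/2$ delivers the required $(1+t)^{-1/2}$ decay after multiplying by $(1+t)^{1/2}$. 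For $\mathcal{I}_{\psi,1}$, Taylor expansion in $p$ as in \eqref{e: nonlinearity Taylor expansion} gives $\|N_1\|_{L^1} \leq C\|p\|_{L^1}\|\psi_x\|_{L^\infty} \leq C\Theta(s)^2 (1+s)^{-3/2}$. For $\mathcal{I}_{\psi, 2}$, I would use the Sobolev embedding $W^{1,1} \hookrightarrow L^\infty$ via Proposition \ref{p: nonlin W11 estimates on p} together with the algebraic localization of $(\omega^{-1}q_*^{-1})_x$ on the right coming from \eqref{e: front asymptotics}, writing $\|N_2\|_{L^1} \leq C\|p\|_{L^\infty}\|(\omega^{-1}q_*^{-1})_x\psi\|_{L^\infty}$ and using the $\Theta$-controlled decay $\|\psi\|_{L^\infty_{-1,0}} \leq \Theta(s)(1+s)^{-1}$ to conclude $\|N_2\|_{L^1} \leq C\Theta(s)^2(1+s)^{-3/2}$. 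The bound on $\mathcal{I}_{\psi, 3}$ is analogous, using $\|p\|_{W^{1,1}}$ to absorb $\omega(\omega^{-1}p)_x$ in $L^1$ and then bounding the remaining factor $(\omega^{-1}q_*^{-1}\psi)_x$ in $L^\infty$ by combining $\|\psi_x\|_{L^\infty}$ with $\|(\omega^{-1}q_*^{-1})_x\psi\|_{L^\infty} \leq C\|\psi\|_{L^\infty_{-1,0}}$.

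The main obstacle is to ensure that \emph{every} nonlinearity decays in $L^1$ strictly faster than $(1+s)^{-1}$, since the marginal rate would only yield a logarithmic loss in Lemma \ref{l: nonlinearity integral estimate}. A crude $L^\infty$--$L^\infty$ product estimate relying only on the diffusive rate $\|\psi(s)\|_{L^\infty} \leq \Theta(s)(1+s)^{-1/2}$ gives exactly $(1+s)^{-1}$ and fails. The resolution is to exploit the faster decay $\|\psi(s)\|_{L^\infty_{-1,0}} \leq \Theta(s)(1+s)^{-1}$ already encoded in $\Theta$; this is admissible only because the coefficients $(\omega^{-1}q_*^{-1})_x$ and $\omega q_*'(\omega^{-1}q_*^{-1})_x$ appearing in $\mathcal{I}_{\psi,2}$ and $\mathcal{I}_{\psi,3}$ have sufficient spatial localization to absorb the algebraic growth on the left allowed by this weighted norm. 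Combining the linear and nonlinear estimates yields the claim.
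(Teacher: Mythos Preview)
Your approach is exactly the paper's: mirror the proof of Proposition~\ref{p: nonlin psi L inf 0 -1 estimates}, replacing the $L^1_{0,1}\to L^\infty_{-1,0}$ estimate of Proposition~\ref{p: psi L1 01 Linf-10 estimate} by the $L^1\to L^\infty$ estimate of Proposition~\ref{p: psi L1 Linf estimate}, and reuse the nonlinearity bounds already established there (which are in the stronger $L^1_{0,1}$ norm and hence certainly suffice in $L^1$).

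One slip to fix: for $\mathcal{I}_{\psi,2}$ you write $\|N_2\|_{L^1}\leq C\|p\|_{L^\infty}\|(\omega^{-1}q_*^{-1})_x\psi\|_{L^\infty}$, which is dimensionally wrong---a product of $L^\infty$ norms cannot bound an $L^1$ norm. The cleanest repair is the paper's own bound: put $p$ in $L^1$ directly, so that after Taylor expansion $\|N_2\|_{L^1}\leq C\|p(s)\|_{L^1}\|\psi(s)\|_{L^\infty_{-1,0}}\leq C\Theta(s)^2(1+s)^{-3/2}$, using that $\omega q_*'(\omega^{-1}q_*^{-1})(\omega^{-1}q_*^{-1})_x$ is bounded by $C\rho_{1,0}$ on the left (absorbing the growth from $\|\psi\|_{L^\infty_{-1,0}}$) and by $C\rho_{0,-2}$ on the right. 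Your alternative route via $W^{1,1}\hookrightarrow L^\infty$ also works if you instead place the coefficient-times-$\psi$ factor in $L^1$, exploiting the $\langle x\rangle^{-2}$ decay of $(\omega^{-1}q_*^{-1})_x$ on the right and its exponential decay on the left; but the displayed inequality needs one $L^\infty$ replaced by $L^1$.
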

\begin{proof}
	The proof is exactly the same as that of Proposition \ref{p: nonlin psi L inf 0 -1 estimates} but with the estimate $\|e^{\mcl_\psi t}\|_{L^1 \to L^\infty} \leq Ct^{-1/2}$ of Proposition \ref{p: psi L1 Linf estimate} replacing Proposition \ref{p: psi L1 01 Linf-10 estimate}. The control of the nonlinearities in the stronger norm $L^1_{0,1}$ obtained in the proof of Proposition \ref{p: nonlin psi L inf 0 -1 estimates} is sufficient to close the argument. 
\end{proof}

It only remains to estimate $\psi_x (t)$ in the spaces encoded in the definition of $\Theta(t)$, \eqref{e: theta def}. To do this, we differentiate the variation of constants formula \eqref{e: voc psi}, obtaining 
\begin{align}
	\psi_x (t) = \partial_x (e^{\mcl_\psi t} \psi_0 ) + \partial_x \mathcal{I}_{\psi,1} (t) + \partial_x \mathcal{I}_{\psi, 2} (t) + \partial_x \mathcal{I}_{\psi, 3} (t). \label{e: voc psi x}
\end{align}
Notice that the linear estimates on $\partial_x e^{\mcl_\psi t}$ all measure the initial data in the $L^1$ norm. Hence the nonlinear estimates on derivatives are again strictly easier than those obtained in the proof of Proposition \ref{p: nonlin psi L inf 0 -1 estimates}, which requires measuring the nonlinearities in the stronger norm $L^1_{0,1}$, and so we readily obtain the following nonlinear estimates on derivatives. 

\begin{prop}[Estimates on $\psi_x(t)$]\label{p: nonlinear psi derivative estimates}
	There exist positive constants $C_1$ and $C_2$ such that for all $t \in (0, T_*)$, we have 
	\begin{align}
		(1+t) \| \psi_x (t) \|_{L^\infty} + t^{1/2} \| \psi_x (t) \|_{L^1} \leq C_1 \left( \| \psi_0 \|_{L^1} +\|\psi_0 \|_{W^{1, \infty}} \right) + C_2 \Theta(t)^2
	\end{align}
	provided $\| \omega^{-1} q_*^{-1} p(s) \|_{L^\infty} \leq \frac{1}{2}$ for all $s \in (0, t)$. 
\end{prop}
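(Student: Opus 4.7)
The approach is to differentiate the variation-of-constants formula \eqref{e: voc psi} to obtain \eqref{e: voc psi x} and then bound each of the four resulting terms, mirroring the structure of the proofs of Propositions~\ref{p: nonlin psi L inf 0 -1 estimates} and~\ref{p: psi L inf estimates} but now using the derivative estimates in Proposition~\ref{p: linear psi derivative estimates}. The key observation is that the $L^1_{0,1}$ bounds already established for the three nonlinearities $F_i(s)$ in the proof of Proposition~\ref{p: nonlin psi L inf 0 -1 estimates} are \emph{a fortiori} $L^1$ bounds, each of the form $\|F_i(s)\|_{L^1} \le C\Theta(s)^2(1+s)^{-3/2}$ under the hypothesis $\|\omega^{-1}q_*^{-1}p(s)\|_{L^\infty}\le 1/2$; combined with the linear kernels $(t-s)^{-1/2}$ and $(t-s)^{-1}$ from Proposition~\ref{p: linear psi derivative estimates}, these will close the argument via Lemma~\ref{l: nonlinearity integral estimate}.

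For the linear piece $\partial_x(e^{\mcl_\psi t}\psi_0)$, estimate~\eqref{e: psi derivative L1 to L1 estimate} immediately gives $t^{1/2}\|\partial_x(e^{\mcl_\psi t}\psi_0)\|_{L^1}\le C\|\psi_0\|_{L^1}$ for all $t>0$. For the $L^\infty$ part I split at $t=1$: on $(0,1]$ I use the parabolic regularity~\eqref{e: small time W 1 ell estimate} to write $\|\partial_x(e^{\mcl_\psi t}\psi_0)\|_{L^\infty}\le\|e^{\mcl_\psi t}\psi_0\|_{W^{1,\infty}}\le C\|\psi_0\|_{W^{1,\infty}}$; on $(1,\infty)$ I use~\eqref{e: psi derivative L1 to Linf estimate} to get $(1+t)\|\partial_x(e^{\mcl_\psi t}\psi_0)\|_{L^\infty}\le C\|\psi_0\|_{L^1}$.

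For each of the three nonlinear integrals I split $\int_0^t=\int_0^{t-1}+\int_{t-1}^t$ (with the obvious modification for $t\le 1$, which reduces to the short-time piece). On $[0,t-1]$ the bounds \eqref{e: psi derivative L1 to L1 estimate} and \eqref{e: psi derivative L1 to Linf estimate} applied to $\|F_i(s)\|_{L^1}\le C\Theta(s)^2(1+s)^{-3/2}$ yield integrands $C\Theta(s)^2(t-s)^{-\alpha}(1+s)^{-3/2}$ with $\alpha=1/2$ and $\alpha=1$, and Lemma~\ref{l: nonlinearity integral estimate} with $\beta=3/2$ gives decay at rate $t^{-1/2}$ and $t^{-1}$ respectively; multiplying by $t^{1/2}$ or $(1+t)$ and using that $\Theta$ is non-decreasing produces the desired $O(\Theta(t)^2)$ bound.

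The one genuinely delicate step is the short interval $[t-1,t]$ for the $L^\infty$ estimate, since the large-time bound $(t-s)^{-1}$ from~\eqref{e: psi derivative L1 to Linf estimate} is not integrable at $s=t$. Here I invoke the standard parabolic smoothing estimate $\|\partial_x e^{\mcl_\psi\tau}\|_{L^\ell\to L^\ell}\le C\tau^{-1/2}$ for $\tau\in(0,1)$, a routine consequence of the sectorial operator theory underlying Lemma~\ref{l: small time estimates} \cite{Lunardi,Henry}, and estimate each $\|F_i(s)\|_{L^\infty}$ directly in terms of $\Theta(s)$ using $|\omega^{-1}q_*^{-1}|\le C\rho_{0,-1}$ (so $\|\omega^{-1}q_*^{-1}p(s)\|_{L^\infty}\le C\|p(s)\|_{L^\infty_{0,-1}}\le C\Theta(s)(1+s)^{-3/2}$), together with the $L^\infty$ controls on $\psi$, $\psi_x$, and the $W^{1,1}$ control on $p$ encoded in $\Theta$. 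This transfers the derivative onto the semigroup at the cost of the integrable kernel $(t-s)^{-1/2}$ and produces the required $O(\Theta(t)^2)$ bound on the short-time piece; the $L^1$ short-time estimate is analogous. Aside from this point, the proof is parallel bookkeeping to Propositions~\ref{p: nonlin psi L inf 0 -1 estimates} and~\ref{p: psi L inf estimates}.
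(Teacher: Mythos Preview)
Your proposal is correct and follows essentially the same route as the paper: differentiate the variation-of-constants formula, invoke the derivative estimates of Proposition~\ref{p: linear psi derivative estimates}, and reuse the $L^1_{0,1}$ (hence $L^1$) bounds on the nonlinearities already established for Proposition~\ref{p: nonlin psi L inf 0 -1 estimates}, closing with Lemma~\ref{l: nonlinearity integral estimate}. Your treatment of the short-time piece $[t-1,t]$ via the standard smoothing bound $\|\partial_x e^{\mcl_\psi\tau}\|_{L^\ell\to L^\ell}\le C\tau^{-1/2}$ together with direct $L^\ell$ control of $F_i(s)$ is more explicit than what the paper writes (which simply declares the derivative estimates ``strictly easier''), but the underlying argument is the same.
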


\begin{lemma}[Small time control of $\psi_{xx}(t)$]\label{l: nonlin small time psi xx control}
	There exist positive constants $C_1$ and $C_2$ such that for all $0 < t < \min(1, T_*)$, we have 
	\begin{align}
		t^{1/2} \| \psi_{xx}(t) \|_{L^\infty} \leq C \left( \| \psi_0 \|_{W^{1,\infty}} \right) + C_2 \Theta(t)^2,
	\end{align}
\end{lemma}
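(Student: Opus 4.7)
The plan is to apply $\partial_{xx}$ to the mild formulation \eqref{e: voc psi} and control each resulting piece using the small-time parabolic regularity estimates from Lemma \ref{l: small time estimates} together with the bounds encoded in $\Theta$. For the homogeneous term, \eqref{e: small time 2nd derivative estimate} gives directly
\begin{equation*}
t^{1/2} \|\partial_{xx}(e^{\mcl_\psi t}\psi_0)\|_{L^\infty} \leq C \|\psi_0\|_{W^{1,\infty}},
\end{equation*}
which yields the first term on the right-hand side.

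For each nonlinear contribution $\mathcal{I}_{\psi, i}(t) = \int_0^t e^{\mcl_\psi(t-s)} N_i(s) \, ds$, applying \eqref{e: small time 2nd derivative estimate} inside the time integral yields
\begin{equation*}
\|\partial_{xx}\mathcal{I}_{\psi,i}(t)\|_{L^\infty} \leq C \int_0^t (t-s)^{-1/2} \|N_i(s)\|_{W^{1,\infty}} \, ds,
\end{equation*}
so the task reduces to bounding each $\|N_i(s)\|_{W^{1,\infty}}$ by $\Theta(s)^2$ times an integrable singularity in $s$.

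Using the Taylor expansion \eqref{e: nonlinearity Taylor expansion} together with $\Theta$ I would bound $\|\psi(s)\|_{L^\infty}, \|\psi_x(s)\|_{L^\infty} \leq C\Theta(s)$ for $s \in (0,1)$, and crucially $\|\psi_{xx}(s)\|_{L^\infty} \leq s^{-1/2}\Theta(s)$ from the small-time term in \eqref{e: theta def}. The one-dimensional Sobolev embedding $W^{1,1}(\R) \hookrightarrow L^\infty(\R)$ gives $\|p(s)\|_{L^\infty} \leq C\|p(s)\|_{W^{1,1}} \leq C\Theta(s)$. Factors of $p_x$ arising in $\partial_x N_i$ are not directly controlled in $L^\infty$ by $\Theta$, but their prefactors (built from $q_*'/q_*$, $\omega^{-1}q_*^{-1}$, $(\omega^{-1}q_*^{-1})_x$, and $\omega(\omega^{-1})'$) all decay at least like $\rho_{0,-1}(x)$ as $x \to +\infty$ by the front asymptotics \eqref{e: front asymptotics}, so these pieces are controlled using $\|\rho_{0,-1} p_x\|_{L^\infty} \leq C\|p(s)\|_{W^{1,\infty}_{0,-1}} \leq C\Theta(s)$. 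The net bound is $\|N_i(s)\|_{W^{1,\infty}} \leq C s^{-1/2} \Theta(s)^2$, with the $s^{-1/2}$ blow-up coming only from the $\psi_{xx}$ factors appearing in $\partial_x N_1$ and $\partial_x N_3$.

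The main obstacle is the resulting self-referential $s^{-1/2}$ singularity in the integrand convolved with the $(t-s)^{-1/2}$ semigroup kernel, which would naively produce a logarithm. It is resolved by the Beta-function identity $\int_0^t (t-s)^{-1/2} s^{-1/2} \, ds = B(1/2, 1/2) = \pi$, uniform in $t > 0$. Combined with the monotonicity of $\Theta$, this yields $\|\partial_{xx}\mathcal{I}_{\psi,i}(t)\|_{L^\infty} \leq C\Theta(t)^2$, and multiplying by $t^{1/2} \leq 1$ for $t < 1$ and summing over $i = 1, 2, 3$ together with the homogeneous estimate completes the bound.
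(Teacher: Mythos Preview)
Your approach mirrors the paper's brief sketch: differentiate the variation-of-constants formula twice, apply \eqref{e: small time 2nd derivative estimate} to each piece, and bound the nonlinearities in $W^{1,\infty}$. The Beta-function identity $\int_0^t (t-s)^{-1/2}s^{-1/2}\,ds = \pi$ is exactly the right device to absorb the self-referential $\psi_{xx}$ singularity, and your treatment of $N_1$ and $N_2$ is correct.

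There is, however, a gap in your treatment of $\partial_x N_3$. Writing $\omega(\omega^{-1}p)_x = p_x + \omega(\omega^{-1})'\,p$, one sees that $\partial_x[\omega(\omega^{-1}p)_x]$ contains a bare $p_{xx}$, and hence $\partial_x N_3$ carries the term $2\,p_{xx}\cdot g(p)\cdot(\omega^{-1}q_*^{-1}\psi)_x$. No $L^\infty$ control of $p_{xx}$ (weighted or not) is encoded in $\Theta$, and the prefactor $\omega^{-1}q_*^{-1}$ is merely bounded on the left, so your claimed bound $\|N_3(s)\|_{W^{1,\infty}}\leq Cs^{-1/2}\Theta(s)^2$ does not follow from the ingredients you list. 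The cleanest repair is to first establish the companion estimate $s^{1/2}\|p_{xx}(s)\|_{L^\infty}\leq C\big(\|p_0\|_{W^{1,\infty}}+\Theta(s)^2\big)$ for $s\in(0,1)$ by applying \eqref{e: small time 2nd derivative estimate} to the $p$-equation; this closes without circularity because the derivatives $\partial_x N_{p,i}$ of the $p$-nonlinearities involve only $\psi_{xx}$ and first derivatives of $p$, never $p_{xx}$. Feeding that bound back into $\|\partial_x N_3\|_{L^\infty}$ then completes your argument, at the harmless cost of an extra $\|p_0\|_{W^{1,\infty}}$ on the right-hand side.
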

provided $\| \omega^{-1} q_*^{-1} p(s) \|_{L^\infty} \leq \frac{1}{2}$ for all $s \in (0,t)$. 
\begin{proof}
	We differentiate the variation of constants formula \eqref{e: voc psi} twice, use the estimate 
	\begin{align*}
		\|\partial_{xx} (e^{\mcl_\psi t} \psi_0 )\|_{L^\infty} \leq C t^{-1/2} \|\psi_0 \|_{W^{1,\infty}}
	\end{align*} 
	from Lemma \ref{l: small time estimates}, and estimate the nonlinear terms in a similar manner to the proof of Proposition \ref{p: psi L inf estimates} for small times. 
\end{proof}

The desired control of $\Theta(t)$, Proposition \ref{p: theta control}, follows readily from the control of the individual terms from Propositions \ref{p: W1 inf 0 -1 estimates on p}, \ref{p: nonlin psi L inf 0 -1 estimates}, \ref{p: psi L inf estimates},  and \ref{p: nonlinear psi derivative estimates} together with Lemma \ref{l: nonlin small time psi xx control}. 

\begin{proof}[Proof of Theorem \ref{t: main}]
	Define 
	\begin{align*}
		\Omega_0 := \| p_0 \|_{L^1_{0, 1}} + \| p_0 \|_{W^{1, \infty} \cap W^{1,1}} + \| \psi_0 \|_{L^1_{0, 1}} + \| \psi_0 \|_{W^{1, \infty}}. 
	\end{align*}
	By Proposition \ref{p: theta control}, there exist positive constants $C_1$ and $C_2$ such that 
	\begin{align}
		\Theta(t) \leq C_1 \Omega_0 + C_2 \Theta(t)^2 \label{e: thm 1 proof theta control}
	\end{align}
	for all $t \in (0, T_*)$ provided $\| \omega^{-1} q_*^{-1} p(s) \|_{L^\infty} \leq \frac{1}{2}$ for all $s \in (0, t)$. It follows from the local well-posedness theory that there exists a constant $C_0 > 0$ such that $\Theta(t) \leq C_0 \Omega_0$ for $t$ sufficiently small. Set $\tilde{C}_1 = \max ( C_1, C_0)$, and suppose $\Omega_0$ is sufficiently small so that 
	\begin{align}
		2 \tilde{C}_1 \Omega_0 < \frac{1}{2} \quad \text{and} \quad 4 \tilde{C}_1 C_2 \Omega_0 < 1. 
	\end{align}
	In particular, $\Theta(t) \leq C_0 \Omega_0 < \frac{1}{2}$ for $t$ sufficiently small, so that \eqref{e: thm 1 proof theta control} holds for $t$ sufficiently small. We show that 
	\begin{align}
		\Theta(t) \leq 2 \tilde{C}_1 \Omega_0 < \frac{1}{2} \label{e: theorem 1 proof theta control}
	\end{align} 
	for all $t \in (0, T_*)$. By construction, $t \mapsto \Theta(t)$ is continuous on $(0, T_*)$. Hence if \eqref{e: theorem 1 proof theta control} does not hold, then there is some time $t_1$ at which $\Theta(t_1) = 2 \tilde{C}_1 \Omega_0$. Considering \eqref{e: thm 1 proof theta control} at time $t_1$, we obtain 
	\begin{align*}
		2 \tilde{C}_1 \Omega_0 \leq \tilde{C}_1 \Omega_0 + 4 (2 \tilde{C}_1 \Omega_0) \tilde{C}_1^2 C_2 \Omega_0^2 \leq \tilde{C}_1 \Omega_0 (1 + 4 \tilde{C}_1 C_2 \Theta_0),
	\end{align*}
	using that in particular $2 \tilde{C}_1 \Omega_0 < 1$. Since we also have $4 \tilde{C}_1 C_2 \Omega_0 < 1$ by assumption, we conclude 
	\begin{align*}
		2 < 1 + 4 \tilde{C}_1 C_2 K (B) \Omega_0 < 2,
	\end{align*}
	a contradiction. Hence $\Theta(t) \leq 2 \tilde{C}_1 \Omega_0$ for all $t \in (0, T_*)$, which implies by the local well-posedness theory that $T_* = \infty$. This global control of $\Theta(t)$ implies in particular 
	\begin{align*}
		\| p(t) \|_{W^{1,\infty}_{0,-1}} \leq \frac{C}{(1+t)^{3/2}} \Omega_0
	\end{align*}
	and 
	\begin{align*}
		\| \psi(t)\|_{W^{1,\infty}} \leq \frac{C}{(1+t)^{1/2}} \Omega_0. 
	\end{align*}
	Reverting to $(r, \phi)$ coordinates and using the fact that there exist constants $c, C > 0$ so that $c \rho_{0,-1} \leq | \omega^{-1} q_*^{-1}| \leq C \rho_{0, -1}$, we find that these estimates are equivalent to those stated in Theorem \ref{t: main}, and the smallness of $\Omega_0$ translates to the smallness condition on the initial data in the statement there. We note that from the control of $\Theta(t)$ we further obtain detailed estimates on $p$ and $\psi$ in stronger norms, as well as decay estimates on derivatives. 
\end{proof}

%

\end{document}